\numberwithin{equation}{section}
\numberwithin{table}{section}
\numberwithin{figure}{section}
\newtheorem{definition}{Definition}[section]
\newtheorem{theorem}{Theorem}[section]
\newtheorem{lemma}{Lemma}[section]
\newtheorem{proposition}{Proposition}[section]
\newcommand{\R}{\mathbb{R}} % real numbers
\newcommand{\D}{\mathcal{D}} % cadlag function space
\newcommand{\pr}{\mathbb{P}} % probability
\newcommand{\E}{\mathbb{E}} % expectation
\newcommand{\dto}{\Rightarrow} % weak convergence
\newcommand{\uoc}{\emph{u.o.c.\ }} % weak convergence
\newcommand{\osc}[2]{\textrm{Osc}({#1},{#2})} % modular of continuity
\newcommand{\fl}[1]{\lfloor{#1}\rfloor} % floor of a number
\newcommand{\dist}{\mathbf{d}^{fp}} % distance
\newcommand{\id}[1]{\mathds{1}_{\{{#1}\}}} % indicator function
\newcommand{\invw}{\mathcal{W}}
\newcommand{\route}{\mathcal{R}} % collection of all routes
\newcommand{\link}{\mathcal{L}} % collection of all links
\newcommand{\phase}{\mathcal{F}} % collection of all phases
\newcommand{\ext}[1]{\bm{#1}} % the whole system is extended vector all phases
\newcommand{\vect}[1]{\bm{#1}} % each route is a vector of phases at one route
\newcommand{\diag}[1]{\mathrm{diag}\left({#1}\right)} % diagonal matrix
\newcommand{\fs}[1]{\bar{#1}^k} % fluid scaling
\newcommand{\sfs}[2]{\bar{#2}^{k,#1}} % shifted fluid scaling 0
\newcommand{\fss}[1]{\tilde{#1}^k} % fluid scaling - var
\newcommand{\ds}[1]{\hat{#1}^k} % fluid scaling
\title{
Insensitivity of Proportional Fairness in Critically Loaded Bandwidth Sharing Networks
}
\author[*]{Maria Vlasiou}
\author[$\dagger$]{Jiheng Zhang}
\author[$\ddag$]{Bert Zwart}
\affil[*]{Eindhoven University of Technology}
\affil[$\dagger$]{The Hong Kong University of Science and Technology}
\affil[$\ddag$]{Centrum Wiskunde \& Informatica, Amsterdam}
\begin{document}

\maketitle

\begin{abstract}
\noindent
Proportional fairness is a popular service allocation mechanism to describe and analyze the performance of data networks at flow level.
Recently, several authors have shown that the invariant distribution of such networks admits a product form distribution under critical loading.
Assuming exponential job size distributions, they leave the case of general job size distributions as an open question.
In this paper we show the conjecture holds for a dense class of distributions. This yields a key example of a stochastic network in which the heavy traffic limit has an invariant
distribution that does not depend on second moments. Our analysis relies on a uniform convergence result for a fluid model which may be of independent interest.
\end{abstract}

\emph{AMS subject classification:} 60K25, 68M20, 90B15.

\emph{Keywords:} Brownian approximations, Lyapunov functions, network utility maximization.

\section{Introduction}
\label{sec:introduction}

A popular way to model congestion of data traffic is to consider such traffic at a level where files or jobs are represented by continuous flows, rather than discrete packets.
This gives rise to bandwidth sharing networks, as introduced in \cite{MR99b}.
Such networks model the dynamic interaction among flows that compete for bandwidth along their source-destination paths.
Apart from offering insight into the complex behavior of computer-communication networks, they have also recently been suggested to analyze road-traffic congestion (see for instance \cite{KellyWilliams2010}).
The analysis of bandwidth sharing networks is challenging,  requiring tools from both optimization and stochastics.

Perhaps the most important bandwidth allocation mechanism that has been considered so far is {\em proportional fairness}.
In a static setting, this policy can be implemented in a distributed fashion, simultaneously maximizing users' utility, cf.\ \cite{Kelly1997,YiChiang2008}.
In addition, proportional fairness is known to be the only policy that satisfies the four axioms of Nash bargaining theory (\cite{Mazumdar1991,StefanescuStefanescu1984}).
These are desirable properties in a static setting. Furthermore, proportional fairness  has  attractive dynamic properties:
while being a greedy policy, proportional fairness has also shown to optimize some long term cost objectives, at least in a heavy traffic environment (\cite{YeYao2012}).
In particular, it is known to be stable under natural traffic conditions in internet flow-level models (\cite{Massoulie2007}).
Recently, proportional fairness has been suggested as an attractive alternative to maximum pressure policies in \cite{Walton2014a}.

In some special cases detailed below, a bandwidth sharing network operating under proportional fairness admits an invariant distribution for the number of users which is computable. As these cases are rather restrictive, it is natural to obtain insight in the performance of proportional fairness for more general network topologies. In \cite{KKLW2009}, it is shown, assuming exponential job size distributions, that the performance of proportional fairness is still tractable if  the network is heavily loaded. Under a heavy traffic assumption, a limit theorem is developed yielding an approximating semimartingale reflected Brownian motion (SRBM), of which the invariant distribution is shown to have a product form. A restrictive assumption in \cite{KKLW2009} (the so-called `local traffic assumption' stating that each link in the network serves a route consisting only of that link) was removed in \cite{YeYao2012} by using elegant geometric arguments. While \cite{YeYao2012} allow for generally distributed flow sizes, they do so assuming that the service policy within a class is first-in-first-out (FIFO), which is well-suited for packet level models \cite{Walton2014Allerton}. In the present paper, we focus on flow level models, in which the per-class discipline is Processor Sharing (PS); this discipline is harder to analyze than FIFO and corresponds to the original open question posed in \cite{KKLW2009}. A recent survey on these developments can be found in \cite{Williams2015}.
% As such, the HOL assumption is only non-restrictive in the exponential case; we focus on PS in the present work.

While the Poisson arrival assumption can often be justified to some degree in practice, the same cannot be said for exponential job size distributions.
As such, it is desirable for the performance of a network to be insensitive to fluctuations in higher moments of the job size distribution.  There is overwhelming statistical evidence that the variance of file sizes is in fact infinite (\cite{Resnick1997}), which can have dramatic impact on performance (\cite{ZwartBorstMandjes2005}). As perfectly stated in \cite{BonaldProutiere2003}: ``the practical value of insensitivity is best illustrated by the enduring success of Erlang's loss formula in telephone networks''. In \cite{BonaldProutiere2003}, it is shown that proportional fairness is the only utility maximizing policy that yields this insensitivity property, provided the network topology has a hypercube structure and that all servers work at the same speed. Given these limitations on the insensitivity of proportional fairness, some related allocation mechanisms have been suggested  that yield insensitivity for arbitrary networks topologies. One such suggestion is balanced fairness (\cite{BonaldProutiere2003}), based on connections with Whittle networks. Another suggestion (\cite{Massoulie2007}) is modified proportional fairness. However, neither of these two policies are utility maximizing.

Though proportional fairness itself may not be always insensitive, it remains a key allocation mechanism for the reasons mentioned above. In fact, the key question addressed but left open in both \cite{KKLW2009} and \cite{YeYao2012}, is whether the product form property of their heavy traffic approximation, derived for exponential job sizes, would still hold for more general job size distributions, yielding insensitivity of proportional fairness in heavy traffic.

The goal of this paper is to provide an affirmative answer to this question, providing both a new perspective of insensitivity in bandwidth sharing networks, as well as establishing new heavy-traffic limits. Postponing a formal description to later sections, we give an informal explanation of our main result. We show that the vector $N$ of the number of users along each route in steady state can be approximated as follows:
\begin{equation}
  \label{mainapproximation}
  N \approx \diag{\rho} A^T E_s.
\end{equation}
Here $\diag{\rho}$ is a diagonal matrix having the load of each route on the diagonal. $A$ is a 0-1 matrix encoding which server (link) is used by which route, and $E_s$ is a vector of independent exponential random variables. Each random variable corresponds to a server, and has as parameter the slack of that resource, i.e.\ if $c$ is the vector of service speeds, then $s = c-A\rho$. The random variables $E_s$ can actually be interpreted as equilibrium values of the Lagrange multipliers associated with the resources.
In \cite{Walton2014Allerton} this property is called {\it product form resource pooling}.
We should note upfront that \eqref{mainapproximation} is based on the steady-state of our heavy traffic limit; we do not interchange heavy traffic and steady state limits. In the case of exponential job sizes, this interchange is established in \cite{STZ2014}.
\cite{JonckheereLopez2014} establish insensitivity of large deviation rate functions assuming the network has a tree topology. Other recent developments of proportional fairness are described in \cite{HMSY2014}.

Our result \eqref{mainapproximation} relies on the assumption that a link in the network is work-conserving. When individual users have additional constraints on their individual access rates, \eqref{mainapproximation} no longer holds, and the distribution of $N$ is better approximated by a multivariate normal, cf.\ \cite{ReedZwart2013}. When relaxing the assumption of proportional fairness to other utility maximizing bandwidth allocation policies, the theory becomes much harder and is still partly conjectural, as the resulting SRBM's no longer live in polyhedral domains, cf.\ \cite{KangWilliams2007,KKLW2009}. In this case, the simple approximation \eqref{mainapproximation} cannot be expected to hold.
Another assumption is that $A$ is of full row rank. \cite{KMW2009} show that  \eqref{mainapproximation} may not hold in in general if $A$ is not of full row rank.
Extensions to multi-path routing, of which its nature and  importance is described in \cite{KKLW2009}, require the elements of $A$ to be nonnegative rather than 0-1, which is not a restriction for the analysis in our paper.

In our analysis, we additionally assume that job size distributions have a particular phase-type structure, which is non-restrictive in the sense that any distribution with non-negative support can be approximated arbitrary closely by such a phase-type distribution. This assumption is technically convenient as it allows for a finite-dimensional Markovian description of the system. Extending our results to more general distributions requires a measure-valued state descriptor, and is beyond the scope of the techniques developed in this paper. Note that this  would still not cover the practically relevant case of job sizes with infinite variance, which has not even been resolved even in the single-node single-class case, cf.\ \cite{LSZ2013}. In the present paper, second moments show up in the description of the process limit, but cancel out against one another while computing the invariant distribution of the SRBM, using the skew symmetric condition developed by \cite{HarrisonWilliams1987}. In particular, we show that the covariance matrix of our SRBM is twice the reflection matrix.

Our justification of \eqref{mainapproximation} is based on the main technical results of this paper, which are Theorems \ref{thm:fluid-conv-inv}, \ref{thm:diffusion} and \ref{thm:productform} below. To derive these results, we adapt the state-space collapse approach of \cite{Bramson1998,Williams1998,Stolyar2004} to our setting, building also on  \cite{Bramson1996b,KKLW2009,Massoulie2007,YeYao2012}. Specifically, we first investigate a fluid model assuming the system is critically loaded, and define a critical fluid model extending \cite{Massoulie2007}. Adapting techniques from \cite{YeYao2012} and \cite{KKLW2009}, we characterize and investigate the set of invariant points of the fluid model.

We then proceed with the main technical challenge of this paper, which is to show that fluid model solutions converge uniformly to an invariant point, at an exponential rate, which is Theorem \ref{thm:fluid-conv-inv}. Ideas from \cite{Bramson1996b} and \cite{Massoulie2007} form a useful starting point, but the analysis pertaining to our setting demands significant additional work. Our main idea is the analysis of a candidate Lyapunov function through a novel application of a rearrangement inequality, significantly simplifying \cite{Massoulie2007}. The resulting upper bound on the derivative of this function is then bounded further using properties like the utility-maximizing nature of proportional fairness. The fact that the proportionally fair bandwidth allocation function may be discontinuous at the boundary complicates the analysis. The analysis of the fluid model is not restricted to phase type routing. Instead, we consider general Markovian routing, expecting the convergence result to be useful beyond its present application, though we need to assume that all external arrival rates are positive.
With the uniform convergence of fluid model solutions in place, the remaining steps follow arguments similar to \cite{YeYao2012}, using in particular some of their intermediate results. This yields the diffusion limit in Theorem \ref{thm:diffusion} and its invariant distribution in Theorem \ref{thm:productform}.

The paper is organized as follows. The network model, and some assumptions are introduced in Section \ref{sec:network-model}. In Section \ref{sec:system-dynamics}, we give a detailed description of the dynamics of our model. These dynamics are rewritten in Section \ref{sec:geometry-fixed-point}, and interpreted in terms of what we expect to see in heavy traffic. An auxiliary fluid model with general Markovian routing is introduced and analyzed in detail in Section \ref{sec:fluid-model}. This paves the way to obtain the diffusion limit in Section \ref{sec:diff-appr}, of which the invariant distribution is computed in Section \ref{sec:invariant}.

\section{The network model}
\label{sec:network-model}

In this section, we provide a detailed model description. As we make heavy use of results from \cite{YeYao2012}, we follow their notation whenever possible. All vectors are column vectors. Throughout the paper, $e$ is a column vector with all elements equal to $1$ and $I$ denotes the identity matrix.  The dimensions of $e$ and $I$ should be clear from the context.

\paragraph{Network structure.}
The network consists of a set of routes $\route=\{1,\ldots, R\}$, which are typically indexed by $r$. Each route traverses several links, which are indexed by $l$, $l \in \link=\{1, \ldots, L\}$. Each link has a service capacity $c_l$. Let $A$ denote the \emph{link-route} matrix of dimension $L\times R$. $A_{l,r}=1$ if route $r$ needs 1 unit of capacity from link $l$ and $0$ otherwise. Assume $A$ has full row rank; hence $L\le R$; we note that all arguments in the paper remain valid if $A$ is a nonnegative matrix of full row rank.

\paragraph{Stochastic assumptions.}
Next, we introduce the arrival process and service time assumptions. We assume for convenience that arrival processes are Poisson with rate $\lambda_r$. Service times at route $r$ follow a phase type distribution with $F_r$ phases. The set $\phase_r=\{1,\ldots, F_r\}$ contains all phases for jobs on route $r$. As is commonplace (cf.\ \cite{Asmussen2003}), a phase-type random variable is the lifetime of an absorbing Markov chain with initial distribution $\vect{a}_r=(\ext{a}_{r,1},\ldots, \ext{a}_{r,F_r})^T\in \R^{F_r}_+$, sub-stochastic transition matrix $P^r\in \R^{F_r}\times\R^{F_r}$, and rates $\vect{\mu}_r=(\ext{\mu}_{r,1},\ldots, \ext{\mu}_{r,F_r})^T\in \R^{F_r}_+$; i.e.\ the service time in phase $f$ is exponentially distributed with rate $\ext{\mu}_{r,f}$. In particular, the mean service time at phase $f$ on route $r$ is $\ext{m}_{r,f} = \frac{1}{\ext{\mu}_{r,f}}$, and
$\vect{m}_r=(\ext{m}_{r,1},\ldots, \ext{m}_{r,F_r})^T\in \R^{F_r}_+$.
We assume
  \begin{align}
    \label{eq:cond-phase-route-init}
    & \lambda_r\ext{a}_{r,f}>0 \textrm{ for all } f\in\phase_r \textrm{ and } r\in\route,\\
    \label{eq:P-phase-inv}
    & (I-P^r) \textrm{ is invertible}.
  \end{align}
The first assumption, that all routes have arrivals for each phase, is non-standard, and required in our analysis in Section \ref{sec:fluid-model}. It is non-restrictive in the sense that an inspection of the proof of
 \cite[Theorem~III.4.2]{Asmussen2003} shows that the resulting class of distributions is still dense in the class of all distributions with non-negative support.
Let $P^{r,T}$, $\vect{a}_r^T$ and $\vect{m}_r^T$ denote the transpose of $P^r$, $\vect{a}_r$ and $\vect{m}_r$, then the mean service requirement $\beta_r$ at route $r$ is
  \begin{equation}
    \label{eq:mean-MixErlang}
    \beta_r = \vect{m}_r^T (I-P^{r,T})^{-1} \vect{a}_r.
  \end{equation}

\paragraph{State-space description.} Denote the $R$-dimensional vector of jobs on each route by $n=(n_1,\ldots,n_R)^T$ with $n_r$ being the number of jobs on route $r\in\route$. To obtain a Markovian description of our network, it is useful to introduce a more detailed state space descriptor
\begin{equation}
  \label{eq:ext-colum-interpre}
  \ext n =
  \left(
    \ext n_{1,1},\ldots, \ext n_{1,F_1},
    \ \ldots\ldots\ ,
    \ext n_{R,1}, \ldots , \ext n_{R,F_R}
  \right)^T,
\end{equation}
with $\ext n_{r,f}$ denoting the number of jobs on phase $f$ at route $r$.
It is clear that $\ext n$ is a  $\sum_{r\in\route}F_r$-dimensional vector and $n_r=\sum_{f\in\phase_r} \ext n_{r,f}$.
We also need a \emph{link-phase} matrix, denoted by $\ext A$ which is of dimension $L\times \sum_{r\in\route}F_r$.
\begin{equation}
  \label{eq:link-phase-matrix}
  \ext A_{l,f} = A_{l,r} \ \textrm{ for all } f=\sum_{r'=1}^{r-1}F_{r'}+1,\ldots, \sum_{r'=1}^{r}F_{r'}.
\end{equation}
Thus, $\ext A$ is obtained by taking the $r$th column of $A$ and repeating it for $F_r$ times.
From now on, when we make a distinction between routes and phases, we speak of `route level' and `phase level'.
The associated notation will be distinguished by using boldface.

\paragraph{Traffic load.} The route-level traffic load for each $r\in\route$ is
\begin{equation}
  \label{eq:rho}
  \rho_r = \lambda_r\beta_r.
\end{equation}
Denote $\rho=(\rho_1,\ldots,\rho_R)^T\in\R_+^R$ and $c=(c_1,\ldots,c_L)^T\in\R_+^L$, then
\begin{equation}
  \label{eq:multi-bottleneck}
  A\rho = c.
\end{equation}
A link $l$ is said to be a bottleneck if $A_l \rho = c_l$.  For convenience, we assume that all links are a bottleneck. This assumption can be removed along the lines of the electronic companion of \cite{YeYao2012}. Note however that we assume \eqref{eq:multi-bottleneck} for our limiting process. Later on, we introduce a sequence of processes, indexed by $k$, for which $c - A\rho^{(k)}$ is of the order $1/k$.

Let $\diag{x}$ be a diagonal matrix that contains the element of a vector $x$. The traffic load for each route $r$ at the phase level is defined as
\begin{equation}
  \label{eq:rho-phase}
  \vect{\rho}_r = \lambda_r[\diag{\vect{m}_r} (I-P^{r,T})^{-1}\vect{a}_r].
\end{equation}
In other words, $\vect{\rho}_r=(\ext{\rho}_{r,1},\ldots,\ext{\rho}_{r,F_r})^T\in \R^{F_r}_+$.
It is clear from \eqref{eq:mean-MixErlang} and \eqref{eq:rho} that the aggregated load for each route $r$ is
\begin{equation}
  \label{eq:rho-phase-route}
  \rho_r = \sum_{f\in\phase_r}\ext\rho_{r,f}.
\end{equation}

\paragraph{Proportional fairness allocation.}
Denote by $\Lambda_r(n)$, $r\in\route$, the capacity allocated to route $r$ jobs when the network status is $n$. Let $\Gamma$ denote the set of all feasible allocations, i.e.\
\begin{equation}
  \label{eq:feasible-set-route}
  \Gamma = \left\{\gamma\in\R^R: A\gamma\le c, \gamma\ge 0\right\}.
\end{equation}
The proportional fair allocation $\Lambda(n)$ is the solution to the optimization problem
\begin{equation}
  \label{eq:opt-route}
  \max_{\gamma\in\Gamma} \sum_{r\in\route} n_r \log(\gamma_r), %\beta_r
\end{equation}
with $\Lambda_r(n)=0$ if $n_r=0$.
According to the optimality condition, any optimal solution to \eqref{eq:opt-route} satisfies
\begin{equation}
  \label{eq:kkt}
  \frac{n_r}{\gamma_r} = \sum_{l\in\link}A_{l,r}\eta_l, \quad r\in\route,
\end{equation}
for some $\eta = (\eta_l)_l \in\R_+^L$.
%\MV{although the notation is in general excellent, a few things are formally missing, such as this point where we didn't say that $\eta=(\eta_l)_l$. I skip these things for now, but I'll make sure to have a thorough look in the end.}
It is known that $\Lambda$ is directionally differentiable on $(0,\infty)^R$ by \cite{ReedZwart2013} (earlier \cite{KellyWilliams2004} established continuity). In addition, $\Lambda$ is radially homogeneous, i.e.\ $\Lambda (yn)=\Lambda (n)$ for $y>0$ \cite{KellyWilliams2004}.

The allocation to each phase $f$ on route $r$ is $\ext\Lambda_{r,f}(\ext n)=\frac{\ext n_{r,f}}{n_r}\Lambda_r(n)$, where we make the convention throughout the paper that $0/0=0\times \infty = 0$.
This is consistent with the fact that $\ext\Lambda(\ext n)$, as a $\sum_{r\in\route}F_r$-dimensional vector, is the optimal solution to
\begin{equation}
  \label{eq:opt-phase}
  \max_{\gamma\in\ext \Gamma} \sum_{r\in\route, f\in\phase_r} \ext n_{r,f} \log(\ext \gamma_{r,f}),
\end{equation}
where
\begin{equation*}
  %\label{eq:feasible-set-phase}
  \ext \Gamma = \left\{
    \ext\gamma\in \R^{\sum_{r\in\route}F_r}:
    \ext A \ext\gamma \le c, \ext\gamma\ge 0
  \right\}.
\end{equation*}
The extended vector $\ext \gamma$, together with $\ext \mu$, $\ext m$ and $\ext\rho$, is interpreted in the same way as \eqref{eq:ext-colum-interpre}.

\section{System dynamics}
\label{sec:system-dynamics}

% By keeping track of phases, rather than only routes, we can formulate our system as a Markovian model with local routing. Consider a sequence of such systems indexed by $k$. Let $E^k_r(t)$ be a Poisson process with rate $\lambda^k_r$ representing the number of arrivals to route $r$ by time $t$ in the $k$th system. Let $\ext E^k_{r,f}$ denote the arrival process to each phase, which is essentially ``thinning'' of the Poisson process $E^k_r(t)$ with probability $a_{r,f}$.

Let $\ext N^k_{r,f}(t)$ denote the number of jobs on route $r$ at phase $f$; $N^k_r(t)=\sum_{f\in\phase_r}\ext N^k_{r,f}(t)$ denotes the total number of jobs on route $r$. Set the column vector $\ext N^k(t)= (\ext N^k_{r,f}(t))$.
The resource allocated to phase $f$ on route $r$ at time $t$ is $\Lambda_{r,f}(\ext N^k(t))$ according to \eqref{eq:opt-phase}.

% ======== stochastic primitives ========
% ---- arrive and service processes ----
For convenience, set $P^{r}_{f,0}=1-\sum_{f'\in\phase_r}P^r_{f,f'}$.
Let $\ext E_{r,f}$, $\ext S_{r,f,f'}$, $r\in\route$, $f\in\phase_r$, $f'\in\phase_r \cup \{0\}$, denote independent unit rate Poisson processes.
% ---- dynamic equation ----
The dynamics of $\ext N^k(t)$ can be written as
\begin{equation}
  \label{eq:dynamics-k}
  \begin{split}
    \ext N^k_{r,f}(t) &= \ext N^k_{r,f}(0) + \ext E_{r,f}(\lambda_r^k \ext{a}_{r,f}t)
    + \sum_{f'\in\phase_r}\ext S_{r,f',f}\Big(\ext{\mu}_{r,f'} P^r_{f',f} \ext D_{r,f'}(t)\Big)\\
    &\quad - \sum_{f'\in\phase_r \cup \{0\}}\ext S_{r,f,f'}\Big(\ext{\mu}_{r,f} P^r_{f,f'} \ext D^k_{r,f}(t)\Big),
   \end{split}
\end{equation}
where
\begin{equation}
  \label{eq:accum-allocation}
  \ext D^k_{r,f}(t) = \int_0^t\ext \Lambda_{r,f}(\ext N^k(s))ds.
\end{equation}
As users at a given route and phase may not leave the network immediately we define a phase-based workload $\ext W^k(t)$ as a  $\sum_{r\in\route}F_r$-dimensional vector interpreted  as in \eqref{eq:ext-colum-interpre}. In particular, setting
\begin{equation}
  \label{eq:P-matrix}
  \ext P=
  \begin{pmatrix}
    P^1 & 0 & 0 & 0 \\
    0 & P^2 & 0 & 0 \\
    \vdots  & \vdots  & \ddots & \vdots  \\
    0 & 0 & \cdots & P^R
  \end{pmatrix},
\end{equation}
the phase-base workload is now defined as
\begin{equation}
  \label{eq:workload-phase}
  \ext W^k(t) = \diag{\ext m} (I-\ext P^T)^{-1} \ext N^k(t),
\end{equation}
This is not the true workload, but is a convenient proxy and a custom choice in heavy traffic analysis; see \cite{Harrison2000} for background.

Define the centered processes
\begin{align}
  \label{eq:center-E}
  \breve{\ext E}^k_{r,f}(t) &= \ext E_{r,f}(\lambda_r^k \ext{a}_{r,f} t)-\lambda_r^k \ext{a}_{r,f}t,\\
  \label{eq:center-S-r-ff'}
  \breve{\ext S}^k_{r,f,f'}(t) &= \ext S^k_{r,f,f'}(\ext{\mu}_{r,f}P^r_{f,f'} t)-\ext{\mu}_{r,f}P^r_{f,f'}t,
\end{align}
and
\begin{align}
  \label{eq:center-S-+}
  \left(\breve{\ext S}^k_+(\ext D^k(t))\right)_{r,f}
  &=\sum_{f'\in\phase_r}\breve{\ext S}^k_{r,f',f}(D^k_{r,f'}(t)),\\
  \label{eq:center-S--}
  \left(\breve{\ext S}^k_-(\ext D^k(t))\right)_{r,f}
  &=\sum_{f'\in\phase_r \cup \{0\}}\breve{\ext S}^k_{r,f,f'}(D^k_{r,f}(t)).
\end{align}
Define the vector $\ext \rho^k$  as in \eqref{eq:rho-phase} with $\lambda_r$ replaced by $\lambda^k_r$. It follows from \eqref{eq:dynamics-k} and \eqref{eq:accum-allocation} that
\begin{equation}
  \label{eq:dynamics-k-workload}
  \ext W^k(t) = \ext W^k(0) + \ext X^k(t) + \int_0^t [\ext\rho - \ext \Lambda(\ext N^k(s))] ds
\end{equation}
where
\begin{equation}
  \label{eq:centered-k}
  \ext X^k(t) = (\ext\rho^k - \ext\rho) t
 + \diag{\ext m} (I-\ext P^T)^{-1} \left[\breve{\ext E}^k(t)+ \breve{\ext S}^k_+(\ext D^k(t)) - \breve{\ext S}^k_-(\ext D^k(t))\right],
\end{equation}
Let
\begin{equation}
  \label{eq:Y-k}
  Y^k(t) = \ext A\int_0^t [\ext \rho - \ext \Lambda (\ext N^k(s)] ds.
\end{equation}
It is easily seen that
\begin{align*}
  Y^k(t) &= \int_0^t [c-\ext A \ext \Lambda(\ext N^k(s))]ds
  = \int_0^t [c- A \Lambda(N^k(s))]ds.
\end{align*}
The vector $Y^k(t)$ can be interpreted as service capacities that have not been used in $[0,t]$. %\MV{bit informal sentence...}

\section{Geometry of the fixed-point state space}
\label{sec:geometry-fixed-point}

In this section we determine and analyze the set of points $\ext n$ for which $\ext \Lambda (\ext{n}) = \ext \rho$.
%In view of the heavy traffic condition \eqref{eq:multi-bottleneck}, one expects that such states play a key role as it MV{what?} implies that all work arriving in the system will be processes \MV{?} in the long run. In fact,
A main technical task is to show that only such states $n$ show up in the heavy-traffic limit. The analysis in this section is inspired by \cite[Section 3 ]{YeYao2012}, though our situation is  different, as we need to deal with routing. Since we only consider local routing, it is possible to utilize their results.

Let $\ext B^\dagger= \diag{\ext m}(I-\ext P^T)^{-1}\diag{\ext \rho}$. Define
%The invariant state of the workload can be characterized as
\begin{equation}
  \label{eq:inv-W}
  \invw := \{w=\ext B^\dagger\ext A^T\pi:\pi=(\pi_l)_{l\in\link}\ge 0\}.
\end{equation}
Observe that $\ext B^\dagger$ is a block-diagonal matrix. Let $\ext C$ be an $R \times \sum_{r\in\route} {F}_r$ matrix with the first $F_1$ columns all being an $R$-dimensional vector $(1,0,\ldots,0)^T$, and the next $F_2$ columns all being $(0,1,\ldots,0)^T$ and so on. For example,
\begin{equation*}
  \ext C =
  \begin{pmatrix}
    1 & \ldots & 1 & 0 & \ldots & 0 & \ldots & 0 & \ldots & 0\\
    0 & \ldots & 0 & 1 & \ldots & 1 & \ldots & 0 & \ldots & 0\\
    \vdots  & \vdots  & \vdots & \vdots  & \vdots  & \vdots & \vdots & \vdots  & \vdots & \vdots\\
    0 & \ldots & 0 & 0 & \ldots & 0 & \ldots & 1 & \ldots & 1
  \end{pmatrix}.
\end{equation*}
Then we have
\begin{equation}
  \label{eq:A-ext-A}
  \ext A = A \ext C.
\end{equation}
Define now the diagonal matrix
\begin{equation}
  \label{eq:B}
  \ext B:= \diag{\ext m}\diag{(I-\ext P^T)^{-1} \ext \rho}.
\end{equation}
Due to the  structure of $\ext A$ (repeating the $r$th column of $A$ for $F_r$ times, see \eqref{eq:link-phase-matrix}), we have
\begin{equation}
  \label{eq:ext-B-equiv}
  \ext B^\dagger \ext A^T=\ext B \ext A^T.
\end{equation}
Due to \eqref{eq:kkt}, we have
\begin{equation*}
  \frac{\ext n_{r,f}}{\ext\gamma_{r,f}} = \sum_{l\in\link}A_{l,r}\eta_l, \quad r\in\route.
\end{equation*}
To connect with the initial motivation of the section, we elaborate on how $\invw$ arises. Suppose $\ext\gamma_{r,f}=\ext\rho_{r,f}$ is the optimal solution to \eqref{eq:opt-phase} and let the $\pi_l=\eta_l$ be the corresponding shadow price. According to \eqref{eq:workload-phase}, the workload of phase $f$ on route $r$ is
\begin{align*}
  \ext w_{r,f} = \ext{m}_{r,f}\sum_{f'}[(1-P^{r,T})^{-1}]_{f,f'}\ext n_{r,f'}
  = \ext{m}_{r,f}\sum_{f'}[(1-P^{r,T})^{-1}]_{f,f'}\ext\rho_{r,f'}\sum_{l\in\link}A_{l,r}\pi_l.
\end{align*}
In matrix form,  $\ext w = \ext B \ext A^T \pi$. $\invw$ contains all states $\ext n$ with $\ext\Lambda_{r,f}(\ext n)=\ext \rho_{r,f}$ for $\ext n>0$. Therefore, $\invw $ is the so-called \emph{invariant manifold}, or \emph{fixed-point state space} associated with the workload process $W^k$ defined in the previous section.

The key difference between our model and that of \cite{YeYao2012} is in the definition of the workload in \eqref{eq:workload-phase}. As a consequence, the matrix $\ext B^\dagger$ is not a diagonal matrix, as required in the geometric analysis in \cite{YeYao2012}. However, due to the special structure of local routing \eqref{eq:P-matrix}, we can replace $\ext B^\dagger$ with $\ext B$ (cf.\ \eqref{eq:ext-B-equiv}) and the structure of $\invw$ coincides with that of the similar manifold introduced in \cite{YeYao2012}. Thus, all the analysis  in \cite{YeYao2012} applies to our situation; this would no longer be the case if we consider full Markovian routing. We now briefly cite some relevant results from \cite{YeYao2012}.

\paragraph{Workload decomposition.}
Let $\Delta$ be the left null space of $\ext A^T$, i.e.\ the kernel of $A$:
\begin{equation*}
  \Delta : = \{ \delta \in \R^{\sum_{r\in\route}F_r}: \ext A \delta = 0\}.
\end{equation*}
as $A$ is of full row rank. We assume without loss of generality that ${\sum_{r\in\route}F_r}> L$; if equality would hold, then this would actually simplify the analysis, as $\invw$ is the positive
orthant in this case.
$\Delta$ is of dimension ${\sum_{r\in\route}F_r}-L$. Since $\ext B$ is diagonal, and thus of full rank, then for any base $H$ (which is of dimension ${\sum_{r\in\route}F_r}\times ({\sum_{r\in\route}F_r}-L)$) of $\Delta$, $\ext B H$ is also a base and
\begin{equation}
  \label{eq:H-matrix}
  \ext A \ext B H=0.
\end{equation}
Moreover, as $\ext B$ is symmetric, one can chose the base $H$ such that
\begin{equation*}
   H^T \ext B H=I.
\end{equation*}
The null space $\Delta$ can now be expressed as
\begin{equation}
  \label{eq:null-space}
  \Delta = \{ \ext B H z: z\in\R^{{\sum_{r\in\route}F_r}-L} \}.
\end{equation}
So any ${\sum_{r\in\route}F_r}$ dimensional real-valued vector $w$ can be decomposed into two linearly independent vectors, one belonging to $\invw$ and one belonging to ${\Delta}$:
\begin{equation}
  \label{eq:w-decompose-AH}
  w = \ext B \ext A^T\pi + \ext B H z,
\end{equation}
with $\pi$ and $z$ as specified in \eqref{eq:inv-W} and \eqref{eq:null-space}. Note that because $\ext A$ and $\ext B$ are both full rank and $\ext A\ext B$ is surjective, $\ext A \ext B \ext A^T$ is invertible. Then set $G= \ext A^T(\ext A \ext B \ext A^T)^{-1}$ and observe that
\begin{equation}
  \label{eq:G-matrix}
   G^T \ext B^T G = G^T \ext B G = (\ext A \ext B \ext A^T)^{-1}, \ \
  G^T \ext B H = 0, \ \
G^T \ext B^T \ext A^T =  G^T \ext B \ext A^T = \ext A \ext B G = I.
\end{equation}
In other words, $g_l$, the $l$th column of $G$ is perpendicular to $Bh_m$, with $h_m$  the $m$th column of $H$. (Keep in mind that $\ext B$ is diagonal.)
Let $\invw_l:= \{w\in \invw: \pi_l=0\}$ denote the $l$th facet of $\invw$, we  see that $g_l$ is  perpendicular to $\invw_l$.
The ${\sum_{r\in\route}F_r}$-dimensional  matrix $(G,H)$ is invertible (cf.\ \cite{YeYao2012}), hence we can  decompose the ${\sum_{r\in\route}F_r}$-dimensional vector $w$ as
\begin{equation}
  \label{eq:w-decompose-GH}
  w = \ext B G y + \ext B H z.
\end{equation}
It follows from \eqref{eq:H-matrix}, \eqref{eq:w-decompose-AH} and \eqref{eq:G-matrix} that
\begin{equation}
  \label{eq:z-pi}
  H^Tw=z \ \textrm{ and }\ G^Tw=\pi.
\end{equation}

\paragraph{Dynamic complementarity problem.}
Consider the following dynamic complementarity problem (DCP), also known as Skorokhod problem. \begin{align}
  \label{eq:dcp-main}
  & \ext w(t) = \ext w(0) + \ext x(t) + \ext BG y(t) + \ext B H z(t) \geq 0,\\
  & G^T \ext w(t) \geq 0,\label{eq:dcp-posi}\\
  & y_l(t) \textrm{ is nondecreasing in } t \textrm{ with }y(0) = 0,\label{eq:dcp-mono}\\
  & \int_0^\infty \ext w(t)^T G dy(t) = 0,\label{eq:dcp-compli}\\
  & H^T \ext w(t) = 0. \label{eq:dcp-H}
\end{align}
If we multiply \eqref{eq:dcp-main} by $H^T$ from the left, we have $z(t) = -H^Tx(t)$ due to \eqref{eq:H-matrix} and \eqref{eq:G-matrix}. Also note that \eqref{eq:w-decompose-AH} and \eqref{eq:z-pi} imply that
\begin{equation*}
  \ext B \ext A^T G^T + \ext B H H^T = I.
\end{equation*}
Therefore, we can eliminate $z(t)$ in \eqref{eq:dcp-main} to obtain
\begin{equation}
  \label{eq:dcp-main-alt}
  \ext w(t) = \ext w(0) + \ext B \ext A^T G^T \ext x(t) + \ext B G y(t) \geq 0.
\end{equation}
It is pointed out in \cite{YeYao2012} that the DCP problem characterized by \eqref{eq:dcp-main-alt} and \eqref{eq:dcp-posi}--\eqref{eq:dcp-compli} can be transformed to a standard Skorohod problem (e.g., \cite{Williams1998}) if we consider $\ext w_G(t) = G^T \ext w(t)$.
%Thus, it inherits all the nice properties (see Propositions~4 and 5 in \cite{YeYao2012}) for further analysis \MV{rephrase}.
Let $\Phi:\D\to\D^3$ denote the solution to the DCP \eqref{eq:dcp-main}--\eqref{eq:dcp-H}, i.e.,
\begin{equation*}
  (\ext w, y, z) = \Phi(\ext x).
\end{equation*}
The results in this section are required to derive the diffusion limit in Section \ref{sec:diff-appr}.

\paragraph{Reflection on the boundary.}
To connect this DCP with our workload process, observe that, applying the decomposition \eqref{eq:w-decompose-GH}, the dynamics for the stochastic workload process \eqref{eq:dynamics-k-workload} can be written as
\begin{equation}
  \label{eq:dynamics-k-workload-decomp}
  \ext W^k(t) = \ext W^k(0) + \ext X^k(t) + \ext B G Y^K(t) + \ext B H Z^k(t),
\end{equation}
where $Y^k(t)$ is defined in \eqref{eq:Y-k} and
\begin{equation*}
  % \label{eq:Z-k}
  Z^k(t) = H^T \int_0^t [\ext\rho - \ext \Lambda(\ext N^k(s))] ds.
\end{equation*}
We see that \eqref{eq:dcp-main} and \eqref{eq:dcp-mono} are valid, while in general \eqref{eq:dcp-posi}, \eqref{eq:dcp-compli} and \eqref{eq:dcp-H} are not.
A main technical challenge of the paper is to show that they are approximately valid for large $k$ under a heavy traffic assumption.

The condition \eqref{eq:dcp-H} says  $\ext w$ lives in $\invw$.
This is not the case in the pre-limit, but  if $w$ is close to $\invw$ and there is backlog at link $l$, then that link is working
at full capacity, which is approximately \eqref{eq:dcp-compli}. To make this formal, define the distance from any state $w$ to $\invw$ as
\begin{equation*}
  % \label{eq:distance-W}
  \dist(w) = \sum_{l\in\link} (-g_l^Tw)^+ + \sum_{m=1}^{{\sum_{r\in\route}F_r}-L} |h_m^T w|.
\end{equation*}
The intuition behind this definition is that, following from \eqref{eq:z-pi}, $w$ is an invariant point if and only if $z=H^Tw=0$ and $\pi\equiv G^Tw\geq 0$.
A key lemma is \cite[Lemma 2]{YeYao2012}.

\begin{lemma}[\cite{YeYao2012}]
  \label{lem:reflection-boundary}
  Let $M>0$ and $\epsilon>0$ be given. There exists a constant $\sigma=\sigma(M, \epsilon)>0$ (sufficiently small) such that
  % for any state $w$ satisfying
  % \begin{equation}
  %   |w|\le \kappa \ \textrm{ and }\ d^{fp}(w)\le \sigma,
  % \end{equation}
  the following implication holds for any $l\in\link$:
  \begin{equation*}
    g_l^Tw>\epsilon \Rightarrow \ext A_l \ext\Lambda (\ext n) = c_l
  \end{equation*}
  if both $|w|\leq M$ and $\dist(w)\le \sigma$.
\end{lemma}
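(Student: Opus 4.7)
The plan is to proceed by contradiction, combining compactness with the KKT characterization of the proportionally fair allocation. Suppose the conclusion fails for some link $l_0$: then there is a sequence $w_k$ with $|w_k|\le M$, $\dist(w_k)\to 0$, $g_{l_0}^T w_k>\epsilon$, yet $\ext A_{l_0}\ext\Lambda(\ext n_k)<c_{l_0}$, where $\ext n_k$ is recovered from $\ext w_k$ via $\ext w_k=\diag{\ext m}(I-\ext P^T)^{-1}\ext n_k$. By boundedness I extract $w_k\to w^*$. Continuity of $\dist$ forces $w^*\in\invw$, and by \eqref{eq:z-pi}, $\pi^*_{l_0}=g_{l_0}^T w^*\ge\epsilon>0$.

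The next step is to identify the routes that remain active in the limit. For each route $r$ with $A_{l_0,r}=1$,
\[
\ext w^*_{r,f} = \ext m_{r,f}[(I-\ext P^T)^{-1}\ext\rho]_{r,f}(A^T\pi^*)_r \ge \ext m_{r,f}\ext\rho_{r,f}\pi^*_{l_0} > 0,
\]
using $\ext\rho_{r,f}>0$, a consequence of assumption \eqref{eq:cond-phase-route-init} and the Neumann-series representation of $(I-P^{r,T})^{-1}$. Inverting the workload/number transformation gives $\ext n^*_{r,f}=\ext\rho_{r,f}(A^T\pi^*)_r>0$ for every phase $f$ of such routes. On this active subset the proportional fair allocation is characterized by smooth KKT equations, and because $A$ has full row rank the dual solution $\pi^*$ at $n^*$ is unique and satisfies $\Lambda_r(n^*)=\rho_r$ for each $r$ with $A_{l_0,r}=1$; summing over these routes shows link $l_0$ is saturated at $\ext n^*$.

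The contradiction is then drawn from the KKT conditions in the pre-limit. The slack assumption $\ext A_{l_0}\ext\Lambda(\ext n_k)<c_{l_0}$ forces the pre-limit multiplier $\eta^{(k)}_{l_0}=0$ by complementary slackness. Continuity of $\Lambda_r$ at $n^*$ for routes $r$ with $A_{l_0,r}=1$ (where $n^*_r>0$) together with the stationarity $n_{k,r}/\Lambda_r(n_k)=\sum_l A_{l,r}\eta^{(k)}_l$ and the uniqueness of the dual forces $\eta^{(k)}\to\pi^*$; but this contradicts $\eta^{(k)}_{l_0}=0$ for all $k$ and $\pi^*_{l_0}\ge\epsilon$.

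The main obstacle is that the conclusion demands exact saturation, $\ext A_{l_0}\ext\Lambda=c_{l_0}$, not mere proximity, which a continuity argument for $\Lambda$ alone cannot deliver: $\Lambda$ is only known to be continuous on the positive orthant, and $\ext n^*$ may well have zero components on routes that do not traverse $l_0$. Exactness must therefore be propagated through the dual/KKT structure, for which the full row rank of $A$ and the positivity guaranteed by \eqref{eq:cond-phase-route-init} are essential. The same argument applies uniformly in $l$, since only finitely many links are present.
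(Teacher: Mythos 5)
The paper does not prove this lemma; it cites \cite[Lemma~2]{YeYao2012} and only verifies, via the identity $\ext B^\dagger\ext A^T=\ext B\ext A^T$ in \eqref{eq:ext-B-equiv}, that the invariant manifold here has the same structure as in that reference, so that the cited result applies. So there is no in-paper proof to compare yours against; I can only assess your argument on its own.

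Your compactness/contradiction strategy and the structural observation that $\ext n^*_{r,f}>0$ for every phase of every route through $l_0$ (because $\pi^*_{l_0}\ge\epsilon>0$, $\ext\rho_{r,f}>0$ from \eqref{eq:cond-phase-route-init}, and $\ext n^*=\diag{\ext\rho}\ext A^T\pi^*$) are correct and are indeed the right starting points. However, the limiting argument has gaps that you gesture at in your last paragraph but do not close.

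First, the statement ``continuity of $\Lambda_r$ at $n^*$ for routes $r$ with $n^*_r>0$'' is not available off the shelf: $n^*$ lies on the boundary of $\R_+^R$, and the paper explicitly warns that the proportionally fair allocation ``may be discontinuous at the boundary.'' Continuity on $(0,\infty)^R$ from Kelly--Williams does not cover the case where $n_{k,r}\to 0$ for some $r$ outside the $l_0$-routes, which is exactly what can happen along your sequence. To obtain $\Lambda_r(n_k)\to\rho_r$ you would need a stability argument for the parametrized program (extract a subsequential limit $\Lambda^*$, show $\Lambda^*_r>0$ on $\route_+(n^*)$, compare optimal values against perturbed feasible points, etc.). Second, even with $\Lambda_r(n_k)\to\rho_r$ in hand, the resulting $A_{l_0}\Lambda(n_k)\to c_{l_0}$ only gives proximity, not the required equality; the contradiction must come from the duals. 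There you invoke ``uniqueness of the dual,'' but this is not an immediate consequence of $A$ having full row rank: the dual optimal set at $n^*$ is cut out by the sub-system of stationarity conditions indexed by $\route_+(n^*)$ together with the complementary-slackness equalities on slack links, and one must check that this reduced linear system pins down $\eta_{l_0}$. (It does, because the non-slack links form a subset of the rows of $A$ and hence remain linearly independent, and all columns of $A$ restricted to those rows are supported on $\route_+(n^*)$ --- but this is a genuine argument, not a one-liner.) Third, you use $\eta^{(k)}\to\pi^*$ without establishing boundedness of $\eta^{(k)}$ along the relevant coordinates, nor that any subsequential limit of the pre-limit multipliers satisfies the limiting KKT/complementary-slackness conditions. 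These three steps are precisely where the difficulty of the lemma lives; asserting them leaves the core of the proof open.
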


To make this lemma relevant, we need to guarantee that we come close to $\invw$ in the first place. This motivates the next section, where we introduce and analyze an auxiliary fluid model.

\section{A fluid model and its convergence to equilibrium}
\label{sec:fluid-model}

The goal of this self-contained section is to introduce and analyze a fluid model. We consider a more general setting: rather than analyzing the model at the phase level, we assume there is a general routing matrix $P$ between different routes. Completed jobs from route $r$ have probability $P_{r,r'}$ to be routed to route $r'$. It is clear that this setting is more general than the phase-type model introduced in Section~\ref{sec:network-model}, where routing is only restricted within phases of each route.  This also allows us to simplify the notation in this section.

The overview of the present section is as follows.
\begin{enumerate}
\item We introduce a fluid model for a model with general routing, which is related to the fluid model in \cite{Massoulie2007} - in fact, we add another requirement to the definition of \cite{Massoulie2007}, so that a function which is a fluid model in our sense, also satisfies the requirements in \cite{Massoulie2007}. We introduce an entropy-like function which was shown in \cite{Massoulie2007} to be a Lyapunov function in the sub-critically loaded case.

\item We show that the entropy-like function remains a Lyapunov function under critical loading. This requires a careful analysis; as also stipulated in \cite{Bramson1996b}, who considered subcritical and critical fluid models of head of the line PS systems. As in \cite{Massoulie2007}, we use classical rearrangement inequalities, but we do so in an entirely different way: we show that the derivative of the Lyapunov function can be rewritten as the expected value of a path functional of a terminating Markov chain, for which we obtain pathwise bounds (see proof of Lemma~\ref{lem:ineq-rearrange}). Our arguments would provide a substantial simplification of the subcritical case, as treated in \cite{Massoulie2007}.

\item Using the bound of the derivative of the Lyapunov function, we then proceed to prove uniform convergence of fluid model solutions towards
the invariant manifold leading to Theorem~\ref{thm:fluid-conv-inv}. On a high level, our approach is similar to that of \cite{Bramson1996b}:

\begin{enumerate}
\item Find a function $L$ which is a Lyapunov function, i.e., show that $f(t)=L(n(t))$ has negative derivative bounded by  $-g(n(t))$, with $g$ a nonnegative function.

\item Show that $f(t) \leq |n(t)|K g(n(t))$ for some constant $K$ independent of $n(t)$.

\item  The two inequalities combined give $f'(t) \leq - f(t)/(K|n(t)|)$. By bounding $|n(t)|$ in terms of $|n(0)|$ we get uniform rates of convergence of $f(t)$ to 0, leading to uniform convergence of $n(t)$ for all fluid models starting in a compact set. %, i.e., $n(0)<M$ for some finite $M$.

\end{enumerate}
On a more detailed level, our arguments are different.
Apart from simplifying and extending ideas from \cite{Massoulie2007}, we develop and use several additional properties of proportional fairness in the process.
\end{enumerate}
In this section, we use lower case for fluid model quantities, such as $n(t)$, instead of $\bar{N}(t)$.

\subsection{A fluid model}

The definition of the route-level quantities $\lambda, \mu, \rho, P$ are still in force, as is the assumption $A\rho=c$. The routing matrix $P$ is no longer block-diagonal. The two assumptions we invoke are
\begin{align}
  \label{eq:assum_P_inv}
  I-P^T \textrm{ is invertible},\\
  \label{eq:assum_lambda_pos}
  \lambda_r>0 \textrm{ for all } r\in\route.
\end{align}
The latter assumption is required for the analysis in this section to work.
Recall that $\Lambda_r(n(t))$ solves the problem \eqref{eq:opt-route}. We are now in a position to present our definition of a fluid model.

\begin{definition}[Fluid Model]
  \label{def:fluid}
  A fluid model is a function $\{n_{r}(t),t\geq 0\}_{r\in\route}$ is an absolute continuous function such that, for almost every $t$,
  \begin{equation}
    \label{eq:fluid-def}
    \dot{n}_{r}(t) = \lambda_{r} - \mu_r \Phi_{r}(n(t))  + \sum_{s\in\route} P_{s,r} \mu_{s} \Phi_{s}(n(t)),
  \end{equation}
  where
  \begin{equation}
    \label{eq:R-Lambda}
    \Phi_r(n(t)) \left\{
      \begin{array}{ll}
        = \Lambda_r(n(t)), & \textrm{ if } n_r(t)>0,\\
        \in [0, \limsup_{y\rightarrow n(t)}  \Lambda_r(y)], & \textrm{ if }n_r(t)=0,
      \end{array}
      \right.
  \end{equation}
  and
  \begin{equation}
    \label{eq:fluid-def-R-<=}
    \sum_{r\in\route}A_{l,r}\Phi_r(n(t))\le c_l\quad \textrm{for all \ } l\in\link.
  \end{equation}
   The auxiliary functions $w(\cdot)$ and $y(\cdot)$ are defined by
  \begin{align}
    \label{eq:fluid-def-W}
    w(t) &= \diag{m}(I-P^T)^{-1}n(t),\\
    \label{eq:fluid-def-Y}
    y(t) &= A\int_0^t[\rho-\Phi(n(s))] ds.
  \end{align}
\end{definition}
Note that the processes $w(\cdot)$ and $y(\cdot)$ are simply derived from $n(\cdot)$.
We call a function $n(\cdot)$ meeting the requirements of Definition \ref{def:fluid} a fluid model solution.
This definition is essentially the same as the one in \cite{Massoulie2007}, though we also require \eqref{eq:fluid-def-R-<=}. This makes the analysis in the present section more convenient, without increasing the burden much when we need to connect with the original stochastic model. As our fluid model solutions also are fluid model solutions in the sense of \cite{Massoulie2007}, we can exploit properties developed in that work.
We call $t$, where \eqref{eq:fluid-def}--\eqref{eq:fluid-def-R-<=} are satisfied, a regular point. If $t$ is regular, we will often say that the associated state vector $n(t)$ is regular. We now provide a more explicit representation for $\Phi_r(n(t))$ for any regular $t$. Introduce
\begin{equation}
  \label{eq:route-+-0}
  \route_0(t) = \{r\in\route: n_r(t)= 0\}
  \ \textrm{ and } \
  \route_+(t) = \{r\in\route: n_r(t)> 0\}.
\end{equation}
It is clear that for any regular $t$, $n_r(t)= \dot{n}_r(t)=0$ for $r\in \route_0(t)$. This implies that
\begin{equation}
  \label{eq:derivative-regular-0}
  \lambda_r - \mu_r \Phi_r(n(t)) + \sum_{s\in \route_0} \mu_{s} \Phi_{s}(n(t)) P_{s,r}
  + \sum_{s\in \route_+} \mu_{s} \Lambda_{s}(n(t)) P_{s,r}
  = 0,\quad r\in\route_0.
\end{equation}
This gives an affine relationship between $(\Phi_r)_{r\in \route_0}$ and $(\Lambda_r)_{r\in \route_+}$. Such an affine relationship depends on the set $\route_+$, which can take only finitely many different values. Thus, we can derive the scalability of $\Phi$ from that of $\Lambda$, i.e., for any scalar $y>0$,
\begin{equation}
  \label{eq:scalability}
  \Phi(n(t)) = \Phi(yn(t)).
\end{equation}
% Regularity yields an affine relation between $\{\Phi_r, r\in \route_0$, and $\Lambda_r, r\in \route_+$, of the form $R|_{\route_0} = M_0+ M_1 \Lambda |_{\route_+}$, with $M_0$ and $M_1$ nonnegative matrices of appropriate dimension, which depend on the set $\route_+$, hence they can take only finitely many different values.
% This affine relationship implies that $R(n(t)) = R(an(t))$ and $R(n(t))\geq \lambda>0$ coordinatewise if $t$ is regular.
The main goal of this section is to give a proof of the following result: % on the stability of fluid model solutions at large times:
\begin{theorem}
  \label{thm:fluid-conv-inv}
  Assume \eqref{eq:assum_P_inv} and \eqref{eq:assum_lambda_pos}. Let $n(\cdot)$ be a fluid model solution. If $|n(0)|<M$ for some constant $M>0$, then for all $\epsilon>0$ there exist a time $T_{M,\epsilon}$ (not depending on $n(\cdot)$) and a state  $n(\infty)$, such that
  \begin{equation*}
    |n(t)-n(\infty)|<\epsilon \quad\textrm{ for all }\ t>T_{M,\epsilon}.
  \end{equation*}
\end{theorem}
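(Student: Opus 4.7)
The plan is to implement the three-step Lyapunov scheme the authors themselves preview at the start of Section~\ref{sec:fluid-model}. Below I describe the construction I would adopt at each step and flag the main obstacle.

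I would take as candidate Lyapunov function a relative entropy of the form
\[
L(n) \;=\; \sum_{r\in\route} \big(n_r \log(n_r/n^\ast_r) \;-\; n_r \;+\; n^\ast_r\big),
\]
where $n^\ast$ is the invariant point of the fluid model associated with the same initial workload as $n(\cdot)$ (the geometry of $\invw$ from Section~\ref{sec:geometry-fixed-point}, applied slice-by-slice in each $\Delta$-translate of workload space, should give existence and uniqueness of $n^\ast$). This is the relative-entropy functional used in \cite{Massoulie2007}: $L$ is non-negative, strictly convex, vanishes only at $n^\ast$, and is coercive, which is exactly the structure needed to extract both a uniform bound on $|n(t)|$ and a convergence criterion from $L(n(t))$.

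The key drift estimate is $\dot L(n(t))\le -g(n(t))$ for some non-negative imbalance function $g$. To obtain it I would differentiate along \eqref{eq:fluid-def}, apply the KKT conditions \eqref{eq:kkt} for proportional fairness, and use the traffic equation $A\rho=c$ together with \eqref{eq:derivative-regular-0} to deal with the empty set $\route_0(t)$. Since under critical load the convenient sub-critical drift term $\sum_r(\rho_r-\mu_r n_r)\log(n_r/n^\ast_r)$ vanishes identically, the remaining expression is where the rearrangement argument must be inserted: one rewrites
\[
\dot L(n(t)) \;=\; \mathbb{E}\big[F(\xi_0,\xi_1,\ldots,\xi_\tau)\big],
\]
where $(\xi_k)$ is a terminating Markov chain on $\route$ with transition kernel $P$ (absorption probability $1-\sum_{r'}P_{r,r'}$ at each $r$) started from the arrival distribution $\lambda/|\lambda|$, and $F$ is a telescoping path functional built from the log-ratios $\log(\Phi_{\xi_k}(n)/\rho_{\xi_k})$. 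Applying a pathwise rearrangement or monotone coupling inequality then forces $\dot L\le -g$ with $g$ measuring a weighted distance of $\Phi(n)$ to $\rho$. I expect this to be the hardest step: one must make $F$ genuinely telescoping in the presence of routes $r\in\route_0(t)$ where $\Phi_r$ is only selected implicitly through \eqref{eq:R-Lambda} and \eqref{eq:derivative-regular-0}, and control the boundary contributions by invoking the utility-maximizing property of $\Lambda$.

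The reverse bound $L(n)\le K\,|n|\,g(n)$ should follow from convexity of $x\log x$ combined with the same KKT representation of $\Lambda$; together with the drift bound it yields the differential inequality $\dot L(n(t))\le -L(n(t))/(K|n(t)|)$. A separate a priori estimate $|n(t)|\le C(M)$ for all $t\ge 0$ whenever $|n(0)|\le M$ follows from coercivity of $L$ and monotonicity $\dot L\le 0$. A Gronwall-type integration then produces exponential decay of $L(n(t))$ on a timescale $T_{M,\epsilon}$ depending only on $M$ and $\epsilon$, giving $n(t)\to n^\ast$ uniformly in $n(0)$ over $\{|n(0)|\le M\}$; this identifies $n(\infty)=n^\ast$ and closes the proof.
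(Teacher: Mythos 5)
Your high-level plan (find a Lyapunov function, get a drift bound, relate it back to the Lyapunov function, conclude via Gronwall/Bramson) matches the paper's three-step scheme, but the Lyapunov function you pick is wrong and the details are internally inconsistent.

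The paper (following \cite{Massoulie2007}) uses $L(n)=\sum_{r}n_r\log\bigl(\Phi_r(n)/\rho_r\bigr)$, not a Kullback--Leibler divergence of $n$ against a pre-selected point $n^\ast$. This distinction is essential. First, the paper's $L$ vanishes on the \emph{entire} invariant manifold $\invw$, so it never requires identifying the limit point in advance; the limit $n(\infty)$ is extracted \emph{after} establishing exponential decay of $L(n(t))$, by a Cauchy-sequence argument. Your construction requires knowing $n^\ast$ a priori "from the same initial workload," but in the critical fluid model the link-level workload $Aw(t)=Aw(0)+\int_0^t\bigl(c-A\Phi(n(s))\bigr)\,ds$ is merely non-decreasing, not conserved during the transient; so there is no a priori bijection from initial workload to limit state, and the $n^\ast$ you need is not available before running the Lyapunov argument. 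Second, your own rearrangement step already involves the log-ratios $\log\bigl(\Phi_{\xi_k}(n)/\rho_{\xi_k}\bigr)$, which is exactly what arises when differentiating the paper's $L$ -- but it is \emph{not} what you get from differentiating your KL functional, whose derivative produces $\log(n_r/n^\ast_r)$. So the proposal's Lyapunov function and its drift computation do not fit together.

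Two further mismatches. The Markov chain in the paper's Lemma~\ref{lem:ineq-rearrange} uses the \emph{modified} routing kernel $\tilde P$ (which depends on $\route_0(t)$ via Lemma~\ref{lem:basic-Massoulie}(ii)) and is started at a fixed route $r$; it is not the raw kernel $P$ started from the normalized arrival rates. And the a priori bound $|n(t)|\le C(M)$ does \emph{not} follow from "coercivity of $L$ and $\dot L\le 0$," because the paper's $L$ is not coercive (it is identically zero on the unbounded set $\invw$); the paper instead invokes Propositions~6.1--6.2 of \cite{Bramson1996b}. Your coercivity shortcut would be correct for a KL divergence, but as argued above that Lyapunov function cannot be set up in the first place.
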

This theorem will be a key tool in the derivation of the diffusion limit in the next section. The remainder of the current section is devoted to its proof.

\subsection{A Lyapunov function}
Introduce
\begin{equation}
  \label{eq:lyapunov}
  L(n(t)) = \sum_{r\in\route} n_r(t) \log \left(\frac{\Phi_r(n(t))}{\rho_r}\right).
\end{equation}
Note that $0\log 0$ is always meant to be $0$.
For convenience, denote $f(t) = L(n(t))$.
We have Lemma~5 of \cite{Massoulie2007}, which we copy almost verbatim.

\begin{lemma}[Basic characterizations from \cite{Massoulie2007}]
  \label{lem:basic-Massoulie}
  Let $n(t)$ be a fluid model solution, and let $\route_0(t)$ and $\route_+(t)$ be as defined in \eqref{eq:route-+-0}.
  \begin{enumerate}
  \item [(i)]
    There exists a constant $M$, such that, for all $t\ge 0$:
    \begin{equation*}
      \limsup_{h\downarrow 0} \frac{f(t+h)-f(t)}{h} \leq  M.
    \end{equation*}
    Let
    \begin{equation}
      \label{eq:lyp-derivative}
      \dot{f}(t) := \sum_{r\in  \route_+(t)} \dot{n}_r(t) \log \left(\frac{\Lambda_r(n(t))}{\rho_r}\right),
    \end{equation}
    then for almost every $t\ge 0$,
    \begin{equation*}
      \limsup_{h\downarrow 0} \frac{f(t+h)-f(t)}{h}\le \dot{f}(t).
    \end{equation*}
  \item [(ii)]
    There exist modified arrival rates $(\tilde \lambda_r)_{r\in \route_+(t)}$ and modified routing probabilities $(\tilde P_{r,s})_{r,s \in \route_+(t)}$, that depend only on the set $\route_0(t)$, such that the matrix $(\tilde P_{r,s})_{r,s \in  \route_+(t)}$ is sub-stochastic with spectral radius strictly less than 1. The identity
    \begin{equation*}
      (\lambda_r)_{r \in  \route_+(t)} = (I-\tilde P^T)^{-1} \tilde \lambda
    \end{equation*}
    holds, and in addition, for almost every $t>0$,
    \begin{equation}
      \label{eq:ODE-modified-rate}
      \dot n_r(t) = \left\{
      \begin{array}{ll}
        \tilde \lambda_r  + \sum_{s\in  \route_+(t)} \mu_s \tilde P_{r,s} \Lambda_s(n(t)) - \mu_r \Lambda_r(n(t)), & r\in \route_+(t),\\
        0, & r\in \route_0(t).
      \end{array}
      \right.
    \end{equation}
    \item [(iii)]
      Let $u_r(t) = \log \left(\frac{\Lambda_r(n(t))}{\rho_r}\right)$ for all $r\in\route_+(t)$.
      \begin{equation}
        \label{eq:lyp-derivative-alt}
        \dot{f}(t) = - \sum_r \lambda_r \sum_{k=0}^{\infty} \sum_{s\in  \route_+(t)} \tilde P_{r,s}^{(k)} (e^{u_s(t)}-1)
        \left[u_s(t) - \sum_{s'\in\route_+(t)} \tilde P_{s,s'} u_{s'}(t) \right].
      \end{equation}
  \end{enumerate}
\end{lemma}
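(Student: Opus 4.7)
The three parts build on one another, so I would prove them in order, with part~(i) being the real obstacle and parts~(ii)--(iii) reducing to linear algebra. The overall strategy mirrors \cite{Massoulie2007}.

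\textbf{Part (i).} The difficulty is that $\Phi$ is possibly discontinuous along $\{n_r = 0\}$ while $\Lambda$ is only directionally differentiable on the positive orthant. I would decompose
\begin{align*}
f(t+h) - f(t) &= \sum_{r\in\route_+(t)}\Bigl[n_r(t+h)\log\frac{\Phi_r(n(t+h))}{\rho_r} - n_r(t)\log\frac{\Lambda_r(n(t))}{\rho_r}\Bigr]\\
&\quad + \sum_{r\in\route_0(t)} n_r(t+h)\log\frac{\Phi_r(n(t+h))}{\rho_r}.
\end{align*}
For $r\in\route_+(t)$, absolute continuity of $n$ forces $n_r(s)>0$ and hence $\Phi_r(n(s))=\Lambda_r(n(s))$ on a neighborhood of $t$; continuity of $\Lambda$ on the positive orthant then lets one expand each summand to leading order. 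The only nontrivial piece, $\sum_{r\in\route_+} n_r(t)\,(d/dt)\log\Lambda_r(n(t))$, is controlled via radial homogeneity $\Lambda(yn)=\Lambda(n)$ combined with the KKT identity $n_r/\Lambda_r=(A^T\eta)_r$: substituting the latter rewrites the sum as $\sum_l \eta_l\,(d/dt)(A\Lambda)_l(t)$, which vanishes at regular points since $\eta_l>0$ forces $(A\Lambda)_l=c_l$ (complementary slackness) together with the feasibility $A\Lambda\le c$. For $r\in\route_0(t)$, absolute continuity gives $n_r(t+h)=O(h)$ while $\log\Phi_r$ blows up at worst like $\log(1/h)$, so the contribution is $O(h\log h^{-1})=o(1)$. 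The uniform upper bound $M$ then follows from $|\dot n_r|\le\lambda_r+\mu_r c_{\max}$ and a uniform bound on $|\log(\Lambda_r/\rho_r)|$ over the feasibility region.

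\textbf{Part (ii).} Since $\dot n_r(t)=0$ for $r\in\route_0(t)$, the fluid equation restricted to $\route_0$ becomes a linear system
\begin{equation*}
(I-P_{00}^T)(\mu\Phi)_0 = \lambda_0 + P_{+0}^T(\mu\Lambda)_+,
\end{equation*}
uniquely solvable because $I-P_{00}^T$ inherits invertibility from \eqref{eq:assum_P_inv} (the spectral radius of $P_{00}$ being strictly less than $1$). Plugging the solution back into the ODE for $r\in\route_+$ yields \eqref{eq:ODE-modified-rate}, with $\tilde\lambda$ and $\tilde P$ given by block expressions in $P_{++}, P_{+0}, P_{0+}$ and $(I-P_{00})^{-1}$; each entry admits a natural first-passage interpretation (jobs detouring through $\route_0$), from which sub-stochasticity and spectral radius strictly less than $1$ follow. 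The identity $\lambda_{\route_+}=(I-\tilde P^T)^{-1}\tilde\lambda$ is a Schur-complement relation for $(I-P^T)^{-1}$, verified by direct block multiplication.

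\textbf{Part (iii).} Plugging \eqref{eq:ODE-modified-rate} into \eqref{eq:lyp-derivative}, writing $\Lambda_s=\rho_s e^{u_s}$, and rearranging using the identity from part~(ii) makes the linear-in-$u$ contributions cancel, leaving an expression of the form $-\sum_s \alpha_s(e^{u_s}-1)\bigl(u_s - \sum_{s'}\tilde P_{s,s'}u_{s'}\bigr)$, where the weight $\alpha_s$ is the effective throughput at $s$ in the reduced system. Expanding $\alpha$ as an absolutely convergent Neumann series in $\tilde P^T$ acting on $\lambda$ (justified by spectral radius $<1$) and interchanging the order of summation yields \eqref{eq:lyp-derivative-alt}. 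The genuinely nontrivial step across the whole proof is the cancellation in part~(i): without exploiting both the KKT stationarity and radial homogeneity of proportional fairness, the clean derivative formula \eqref{eq:lyp-derivative} could not be extracted despite the non-smoothness of $\Phi$ and $\Lambda$.
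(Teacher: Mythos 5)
The paper does not prove this lemma itself; the stated proof is a one-line citation to Lemma~5 and page~821 of \cite{Massoulie2007}. Your proposal is essentially a reconstruction of Massoulié's argument, and it captures the correct structure and the right key ideas: for (i), the split between $\route_+(t)$ and $\route_0(t)$, the use of the KKT identity $n_r/\Lambda_r = (A^T\eta)_r$ together with complementary slackness to collapse $\sum_r n_r \,\tfrac{d}{dt}\log\Lambda_r$ to $\sum_l \eta_l\,\tfrac{d}{dt}(A\Lambda)_l = 0$; for (ii), the elimination of $(\Phi_r)_{r\in\route_0}$ via the linear system coming from $\dot n_r = 0$ on $\route_0$, with $\tilde P,\tilde\lambda$ as a Schur complement and the first-passage interpretation giving sub-stochasticity; for (iii), substitution of \eqref{eq:ODE-modified-rate}, the change of variable $\Lambda_s = \rho_s e^{u_s}$, and Neumann expansion of $(I-\tilde P^T)^{-1}$.

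Two small imprecisions in part (i) are worth flagging, though they do not change the structure. First, for $r\in\route_0(t)$ at a regular $t$, what you actually have is $n_r(t+h)=o(h)$ (since $\dot n_r(t)=0$ and $n_r$ is differentiable at $t$), not merely $O(h)$; and the correct reason the $\route_0$ contribution is $o(1)$ for the $\limsup$ is not the heuristic "$\log\Phi_r$ blows up like $\log(1/h)$" but the one-sided bound $\log(\Phi_r/\rho_r)\le\log(c_{\max}/\rho_r)$: terms with $\Phi_r<\rho_r$ are negative and can be discarded in an upper bound, while terms with $\Phi_r\ge\rho_r$ have a bounded logarithm, giving $o(h)\cdot O(1)=o(h)$. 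Second, you invoke radial homogeneity of $\Lambda$ in the cancellation step but never actually use it there; the step rests entirely on KKT plus complementary slackness plus feasibility $A\Lambda\le c$. (Radial homogeneity is used elsewhere in the paper, but not in this particular chain.) With those adjustments, the sketch matches the cited source.
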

\begin{proof}
  Properties (i) and (ii) follow from Lemma~5 of \cite{Massoulie2007} and property (iii) follows from the arguments on page 821 of \cite{Massoulie2007}.
\end{proof}

In \cite{Massoulie2007}, an elaborated argument is followed to show that $\dot{f}(t)<0$ in the sub-critically loaded case. In this paper, we study the critical loaded case (i.e., $A\rho =c$).
The analyses in these two cases are quite different (cf.\ the difference of complexity between convergence of subcritical and critical fluid models as exhibited in \cite{Bramson1996b}).
From this moment on, our analysis and the analysis in \cite{Massoulie2007} follow separate ways.

\subsection{Bounding the derivative of the Lyapunov function}

\begin{proposition}
  \label{prop:lyp-derivative-bound}
  For any regular $t\ge 0$,
  \begin{equation*}
    \dot{f}(t) \leq - \sum_{r\in  \route_+(t)}  \lambda_r
    \left(
      \frac{\Lambda_r(n(t))}{\rho_r}-1\right) \log \left(\frac{\Lambda_r(n(t))}{\rho_r}
    \right).
  \end{equation*}
  Assuming $\lambda_r$ is strictly positive for all routes $r$, there exists an $\epsilon>0$ such that
  \begin{equation*}
    \dot{f}(t) \leq -
    \epsilon \sum_{r \in \route_+ (t)} \left( \Lambda_r(n(t))-\rho_r\right) \log \left(\frac{\Phi_r(n(t))}{\rho_r}\right).
  \end{equation*}
\end{proposition}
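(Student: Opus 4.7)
The plan is to start from the closed-form expression for $\dot f(t)$ given by Lemma~\ref{lem:basic-Massoulie}(iii), reinterpret it probabilistically as a path functional of a terminating Markov chain, and then establish the desired lower bound via a pathwise rearrangement-type inequality.

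First I would view $\tilde P$ as the substochastic generator of a chain $(X_k)_{k\geq 0}$ on $\route_+(t)\cup\{\partial\}$, with $\partial$ absorbing and the convention $u_\partial := 0$. Writing $U_k := u_{X_k}$, $\phi(u):= e^u-1$, and $\tau$ for the absorption time, the identity $u_s - \sum_{s'}\tilde P_{s,s'}u_{s'} = u_s - \E[U_{k+1}\mid X_k = s]$ combined with the tower property and the fact that $\{k<\tau\}\in\mathcal{F}_k$ transforms Lemma~\ref{lem:basic-Massoulie}(iii) into
\[-\dot f(t) \;=\; \sum_{r\in\route_+(t)} \lambda_r\, \E_r\!\left[\sum_{k=0}^{\tau-1}\phi(U_k)\bigl(U_k-U_{k+1}\bigr)\right].\]
Abel summation together with $U_\tau=0$ then yields the pathwise telescoping identity
\[\sum_{k=0}^{\tau-1}\phi(U_k)(U_k-U_{k+1}) \;=\; \phi(U_0)U_0 \;+\; \sum_{k=1}^{\tau-1}U_k\bigl[\phi(U_k)-\phi(U_{k-1})\bigr],\]
whose leading term, under $\sum_r\lambda_r\,\E_r[\cdot]$, produces exactly $\sum_{r\in\route_+(t)}\lambda_r\phi(u_r)u_r$, i.e.\ the bound sought in the first assertion.

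The heart of the argument is to show that the correction term contributes non-negatively to the expectation,
\[\sum_{r\in\route_+(t)}\lambda_r\, \E_r\!\left[\sum_{k=1}^{\tau-1}U_k\bigl(\phi(U_k)-\phi(U_{k-1})\bigr)\right]\;\geq\; 0.\]
Evaluating the Markov expectation using $\sum_{k\geq 0}\tilde P^{(k)} = (I-\tilde P)^{-1}$ rewrites this as
\[\sum_{s,s'\in\route_+(t)}\pi_s\tilde P_{s,s'}\,u_{s'}\bigl(\phi(u_{s'})-\phi(u_s)\bigr)\;\geq\; 0,\qquad \pi_s := \bigl[\lambda^T(I-\tilde P)^{-1}\bigr]_s.\]
Since $\phi$ is strictly increasing, the pairwise identity $G(x,y)+G(y,x)=(y-x)(\phi(y)-\phi(x))\geq 0$ for $G(x,y):=y(\phi(y)-\phi(x))$ holds. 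Symmetrising the double sum over unordered pairs $\{s,s'\}$ and combining this pairwise positivity with the weights $\pi_s\tilde P_{s,s'}$ gives the claim; this is the ``novel application of a rearrangement inequality'' advertised in the introduction, and I would isolate it as a separate pathwise lemma. The main obstacle is precisely this step: since the chain is not in general reversible, the weights are asymmetric in $s,s'$, so the symmetrisation has to be carried out so that the balance identity $\pi_s = \lambda_s + \sum_{s''}\pi_{s''}\tilde P_{s'',s}$ together with the KKT structure of proportional fairness compensate for the asymmetry.

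The second inequality then follows immediately from the first by rescaling. For $r\in\route_+(t)$ one has $\Phi_r(n(t))=\Lambda_r(n(t))$, so
\[\lambda_r\!\left(\frac{\Lambda_r}{\rho_r}-1\right)\log\frac{\Lambda_r}{\rho_r} \;=\; \frac{\lambda_r}{\rho_r}\,\bigl(\Lambda_r-\rho_r\bigr)\log\frac{\Phi_r}{\rho_r}.\]
Setting $\epsilon := \min_{r\in\route}\lambda_r/\rho_r$, which is strictly positive because $\lambda_r>0$ for every $r\in\route$, the first inequality yields the second.
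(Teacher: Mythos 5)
Your setup reproduces the paper's argument faithfully up to notation: you rewrite $-\dot f(t)$ as $\sum_r \lambda_r \E_r\big[\sum_{k<\tau} \phi(U_k)(U_k - U_{k+1})\big]$, extract the leading term $\phi(U_0)U_0$ via Abel summation, and reduce the claim to nonnegativity of the residual $\E_r\big[\sum_{k=1}^{\tau-1} U_k(\phi(U_k) - \phi(U_{k-1}))\big]$. In the proof of Lemma~\ref{lem:ineq-rearrange} the paper lands on precisely this residual, written there as $\E_r\big[\sum_{k=0}^{k_0-1}v_{X_k}(u_{X_k}\id{k>0} - u_{X_{k+1}})\big]$, which after re-indexing and using $u_{X_{k_0}}=0$ is the same expression $\E_r\big[\sum_{k=1}^{k_0-1}u_{X_k}(v_{X_k}-v_{X_{k-1}})\big]$.

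The divergence — and the gap — is the last step. The paper asserts the residual is nonnegative \emph{pathwise}, for every realization of the terminating chain, by invoking the rearrangement inequality of \cite{HLP1988} for the pairs $a_k=v_{X_k}$ and $b_k=u_{X_k}\id{k>0}$, comparing the identity pairing $\sum_k a_k b_k$ against the cyclic shift $\sum_k a_k b_{k+1}$ (a legitimate permutation of $b$ because $b_0=0=u_{X_{k_0}}$). This yields $h_r\geq u_r(e^{u_r}-1)$ for each $r$ separately, for arbitrary real $u$, and Proposition~\ref{prop:lyp-derivative-bound} then follows by summing against $\lambda_r$ and recalling $e^{u_r}=\Lambda_r/\rho_r$. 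Your proposal instead takes the $\lambda$-weighted expectation of the residual, reduces it (correctly) to the double sum $\sum_{s,s'}\pi_s\tilde P_{s,s'}u_{s'}(\phi(u_{s'})-\phi(u_s))$, and then observes that the obvious symmetrization over $\{s,s'\}$ would finish if the weights $\pi_s\tilde P_{s,s'}$ were symmetric — but they are not, and your proposed remedy, that the balance identity and ``the KKT structure of proportional fairness'' should compensate for the asymmetry, is a statement of hope, not a cancellation. No identity is exhibited, so the central inequality of the proposition remains unproved. There is also an internal inconsistency worth flagging: you say you would isolate this ``as a separate pathwise lemma,'' but the reduction you actually write down has already passed to an expectation over the chain, so it is no longer a per-realization statement. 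To close the argument you need the pathwise comparison of pairings along a single realization (the paper's Lemma~\ref{lem:ineq-rearrange}), not a post-hoc symmetrization of the $\lambda$-averaged double sum. The second inequality (derived from the first by taking $\epsilon=\min_r\lambda_r/\rho_r$ and using $\Phi_r=\Lambda_r$ on $\route_+(t)$) is fine.
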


The proof follows directly from Lemma~\ref{lem:basic-Massoulie} and the following lemma based on a rearrangement inequality, which may be of independent interest.

\begin{lemma}
  \label{lem:ineq-rearrange}
  Let $\{u_r\}_{r\in\route_+}$ be arbitrary real numbers where $\route_+$ is any subset of positive integers. Set
  \begin{equation*}
    h_r =  \sum_{k=0}^{\infty} \sum_{s\in \route_+} \tilde P_{r,s}^{(k)} (e^{u_s}-1)\left[u_s - \sum_{s'\in \route_+} \tilde P_{s,s'} u_{s'} \right].
  \end{equation*}
  Then
  \begin{equation*}
    h_r \geq u_r (e^{u_r}-1).
  \end{equation*}
\end{lemma}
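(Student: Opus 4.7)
The plan is to reinterpret $h_r$ as an expected value over a terminating Markov chain, apply summation by parts pathwise, and then reduce the inequality to a rearrangement-type statement over visited states. Since $\tilde{P}$ is sub-stochastic with spectral radius strictly less than $1$, view it as the one-step transition kernel of a terminating Markov chain $(X_k)_{k\ge 0}$ on $\route_+\cup\{\partial\}$, where $\partial$ is an absorbing cemetery state adjoined under the convention $u_\partial := 0$. The absorption time $\tau := \inf\{k : X_k = \partial\}$ is a.s.\ finite, and the identity $\sum_{s'\in\route_+}\tilde{P}_{s,s'}u_{s'} = \E[u_{X_{k+1}}\mid X_k = s]$ together with Fubini yields
\[
h_r = \E_r\Big[\sum_{k=0}^{\tau}(e^{u_{X_k}}-1)(u_{X_k}-u_{X_{k+1}})\Big],
\]
where $\E_r$ denotes expectation conditional on $X_0 = r$.

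Next, setting $v_k := u_{X_k}$ with $v_{\tau+1} := 0$, summation by parts gives the pathwise identity
\[
\sum_{k=0}^{\tau}(e^{v_k}-1)(v_k-v_{k+1}) = v_0(e^{v_0}-1) + \sum_{k=1}^{\tau} v_k\bigl(e^{v_k}-e^{v_{k-1}}\bigr).
\]
Taking expectations on both sides, the claim reduces to showing that the expected ``excess'' $\E_r\bigl[\sum_{k=1}^{\tau} u_{X_k}(e^{u_{X_k}}-e^{u_{X_{k-1}}})\bigr]$ is nonnegative. The analytic ingredient driving this is the elementary rearrangement inequality $(a-b)(e^a-e^b)\ge 0$: if each forward transition $s\to s'$ in the chain can be matched with its time-reverse $s'\to s$, the two contributions combine into the nonnegative product $(u_s-u_{s'})(e^{u_s}-e^{u_{s'}})$. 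Equivalently, collecting expected visit counts through the Green function $G := (I-\tilde{P})^{-1}$, the excess can be rewritten as $\sum_{s,s'\in\route_+} G_{r,s}\,\tilde{P}_{s,s'}\,u_{s'}(e^{u_{s'}}-e^{u_s})$, and the plan is to symmetrize this double sum using the structure of the visit measure induced by the terminating chain.

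The main obstacle is precisely this symmetrization: the individual pathwise summands $v_k(e^{v_k}-e^{v_{k-1}})$ carry no definite sign (they can be positive or negative depending on whether the sequence of $u$-values moves toward or away from zero), so no termwise bound suffices. The argument must be carried out globally at the level of the expected-visit measure, pairing contributions across the double sum so that each unordered pair $\{s,s'\}$ contributes the nonnegative product $(u_s-u_{s'})(e^{u_s}-e^{u_{s'}})$ with a nonnegative coefficient obtained from the Green function of the terminating chain. This global rearrangement is the novel technical device referenced in the introduction, and it replaces the more intricate analysis of \cite{Massoulie2007} with a conceptually cleaner one.
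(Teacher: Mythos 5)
Your proposal opens the same way the paper does: the Markov chain reading of $h_r$, and (after your summation by parts, using $u_{X_{\tau}}=0$) the recognition that $h_r - u_r(e^{u_r}-1)$ equals $\E_r\bigl[\sum_{k\geq 1}u_{X_k}(e^{u_{X_k}}-e^{u_{X_{k-1}}})\bigr]$. The paper reaches exactly the same quantity by splitting off the $k=0$ term, so up to this point the two arguments coincide. Where you part ways is in what comes next: the paper asserts a pathwise (a.s.) inequality, justifying it by the Hardy--Littlewood--P\'olya rearrangement inequality applied to the time-indexed sequences $(v_{X_k})$ and $(u_{X_k}\id{k>0})$, whereas you correctly observe that the per-step summands $u_{X_k}(e^{u_{X_k}}-e^{u_{X_{k-1}}})$ carry no definite sign and propose to close the argument at the level of the Green-function double sum by a global ``symmetrization.''

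This is precisely where your proposal has a genuine gap: the symmetrization is a plan, not a proof, and the plan cannot be carried out as written. Symmetrizing $\sum_{s,s'}G_{r,s}\tilde P_{s,s'}u_{s'}(e^{u_{s'}}-e^{u_s})$ over $(s,s')\leftrightarrow(s',s)$ produces the desired sum of nonnegative products $(u_s-u_{s'})(e^{u_s}-e^{u_{s'}})$ with nonnegative coefficients only if one has the detailed-balance identity $G_{r,s}\tilde P_{s,s'}=G_{r,s'}\tilde P_{s',s}$. Sub-stochasticity of $\tilde P$ does not give this; already for $\route_+=\{1,2\}$ with $\tilde P_{1,2}=1$, $\tilde P_{2,1}=\tilde P_{2,2}=0$ and $r=1$ one has $G_{1,1}\tilde P_{1,2}=1$ while $G_{1,2}\tilde P_{2,1}=0$, so the unmatched edge leaves a term of indefinite sign. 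Thus the novel ingredient you advertise --- the pairing argument at the level of the visit measure --- is exactly the step that is missing, and it is not at all clear how to supply it.

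You should also be aware that your sign observation is a legitimate worry about the paper's own route. The rearrangement inequality needs the pairing $(v_{X_k},\,u_{X_k}\id{k>0})_{k=0}^{k_0-1}$ to be similarly ordered; this holds for $k\geq 1$ (as $e^u-1$ is increasing in $u$), but the $k=0$ pair is $(e^{u_r}-1,\,0)$, which matches the similar ordering only when $u_r$ lies between $0$ and every visited $u_{X_j}$. In the two-state example above with the deterministic path $1\to 2\to 0$ and $u_1=2$, $u_2=1$, the paper's claimed a.s.\ inequality becomes $e-1\geq e^2-1$, which is false. So if you want to repair your proof (or the paper's), the missing ingredient must exploit structure of $(\tilde P,u)$ beyond sub-stochasticity and spectral radius $<1$; as stated, neither the pathwise rearrangement step nor your global symmetrization step goes through.
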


\begin{proof}
Let $X_k$ be a Markov chain on $\route_+ \cup \{0\}$ starting from $X_0=r$ evolving according to the transition matrix $\tilde P$ with 0 as absorbing state. Set $h_0=0$ and $u_0=0$.
Note that
\begin{equation*}
  \pr(X_k=s, X_{k+1}=s' \mid X_0=r) = \tilde P_{r,s}^{(k)} \tilde P_{s,s'}.
\end{equation*}
Let $\E_r[\cdot]$ denote the conditional expectation given that $X_0=r$.
Set $v_r = e^{u_r}-1$ for all $r\in\route_+\cup\{0\}$, then
\begin{align*}
  h_r &= \sum_{k=0}^\infty\sum_{s\in \route_+} \tilde P_{r,s}^{(k)} v_s (u_s-\sum_{s'\in \route_+} \tilde P_{s,s'}u_s')
  = \sum_{k=0}^\infty\sum_{s\in \route_+} \sum_{s'\in \route_+} \tilde P_{r,s}^{(k)} \tilde P_{s,s'} v_s (u_s-u_{s'})\\
  &= \sum_{k=0}^\infty \E_r\left[v_{X_k}(u_{X_k}-u_{X_{k+1}})\right].
\end{align*}
%
% \JZ{The following is Neil's idea, but I can not follow. }
% \begin{align}
%   h_r &= \sum_k  \E [  v_k ( u_k - u_{k+1})]  \nonumber\\
%   \label{eq:NW-1}
%   &= \sum_k \E [ ( w_k - 1 ) ( \log w_k  - \log w_{k+1} ) ] \\
%   \label{eq:NW-2}
%   &= D ( \{w_k\}_{k=0}^\infty || \{w_{k+1}\}_{k=0}^\infty ) + (w_0 - 1) \log w_0  \\
%   &\ge  (w_0 - 1) \log w_0 \nonumber\\
%   &= u_r (e^{u_r}-1),\nonumber
% \end{align}
% where $u_k = \log w_k, v_k= w_k -1$ in \eqref{eq:NW-1} and $D(\cdot ||\cdot )$  in \eqref{eq:NW-2} is the KL-distance (as I understand it, he called it ``unormalized relative entropy'').
% \JZ{First, I do not know he got away with the expectation $\E$. Second, I have problem to go from \eqref{eq:NW-1} to \eqref{eq:NW-2}. The KL distance
%   \begin{align}
%     \label{eq:3}
%     D ( \{w_k\}_{k=0}^\infty || \{w_{k+1}\}_{k=0}^\infty )
%     = \sum_k w_k(\log w_k-\log w_{k+1})
%   \end{align}
%   What left in \eqref{eq:NW-1} is
%   \begin{align}
%     \label{eq:1}
%     - \sum_k ( \log w_k  - \log w_{k+1} ) = -\log w_0 + \log w_\infty
%   \end{align}
% }
%
Let $k_0=\inf \{n: X_k=0\}$, then
\begin{equation*}
  h_r = \E_r\left[\sum_{k=0}^{k_0-1}  v_{X_k}(u_{X_k}-u_{X_{k+1}}) \right]
      = \E_r \left[\sum_{k=0}^{k_0-1}v_{X_k}(u_{X_k}\id{n>0}-u_{X_{k+1}}) \right]
      + v_r u_r.
\end{equation*}
We claim that, a.s.,
\begin{equation*}
  \sum_{k=0}^{k_0-1}v_{X_k}u_{X_k}\id{n>0} \geq \sum_{k=0}^{k_0-1} v_{X_k} u_{X_{k+1}}.
\end{equation*}
This follows from a classical rearrangement inequality in \cite{HLP1988} stating that if $(a_k)$ and $(b_k)$ are two non-decreasing finite sequences, and $(b_k^{[p]})$ is a permutation of $(b_k)$, then $\sum_{k} a_kb_k \geq \sum_{k} a_k b_k^{[p]}$. We can apply this inequality since $0=u_{X_0}\id{0>0} = u_{X_{k_0}}$. Thus, $h_r \geq v_r u_r$ and the lemma is proved.
\end{proof}

\subsection{Bounding the Lyapunov function in terms of its derivative}

Having established an upper bound for $\dot{f}(t)$, our next task is to connect this bound to $f(t)$, which is establish in the next proposition.

\begin{proposition}
  \label{prop:lyp-bound}
  Let $\epsilon$ be given in Proposition~\ref{prop:lyp-derivative-bound}.
  There exists $\zeta^*<\infty$ such that for almost every $t\ge 0$,
  \begin{equation*}
    \dot{f}(t) \leq -\frac{\epsilon}{\zeta^*} \frac{1}{|n(t)|} f(t).
  \end{equation*}
\end{proposition}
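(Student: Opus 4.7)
The plan is to combine the derivative bound in Proposition~\ref{prop:lyp-derivative-bound} with a uniform estimate
\[
\sum_{r\in\route_+(t)} n_r u_r \;\le\; \zeta^*\,|n(t)| \sum_{r\in\route_+(t)} (\Lambda_r - \rho_r) u_r,
\]
where $u_r = \log(\Lambda_r(n(t))/\rho_r)$; note $f(t) = \sum_{r\in\route_+(t)} n_r u_r$ since $n_r = 0$ for $r\in\route_0(t)$. Because of the radial homogeneity \eqref{eq:scalability} of $\Phi$ (and hence of $\Lambda$), the direction $\zeta := n(t)/|n(t)|$ alone determines $u_r$ and $\Lambda_r$, while both sides of the desired inequality scale linearly in $|n(t)|$. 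It thus suffices to bound, uniformly over the compact unit simplex and over the finitely many choices of $\route_+\subseteq \route$, the ratio $\Psi(\zeta)$ of numerator to denominator.

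First I would verify nonnegativity. Each summand of the denominator is $\rho_r(x-1)\log x \ge 0$ at $x = \Lambda_r/\rho_r$. The numerator is nonnegative by the utility-maximization property defining $\Lambda$: since $A\rho = c$ and $\rho \ge 0$, $\rho$ is feasible in \eqref{eq:opt-route}, so $\sum_r n_r\log\rho_r \le \sum_r n_r\log\Lambda_r(n)$. Continuity of $\Lambda$ on $(0,\infty)^R$ \cite{KellyWilliams2004} then gives continuity of $\Psi$ on each relatively open face where the denominator is strictly positive. The only obstruction to a uniform bound (via compactness) is the set where the denominator vanishes, namely the trace of the invariant manifold $\{\zeta : \Lambda_r(\zeta) = \rho_r\ \forall r\in\route_+\}$; on this set the numerator vanishes too (all $u_r = 0$).

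To control $\Psi$ in a neighborhood of this locus I would perform a second-order Taylor expansion around a fixed point $\zeta^\circ$. The KKT conditions for \eqref{eq:opt-route} give $\zeta^\circ_r = \rho_r g_r$ with $g_r = \sum_l A_{l,r} \pi_l \ge 0$, and by complementary slackness $(A\,\delta\Lambda)_l = 0$ whenever $\pi_l > 0$, provided the perturbation $\delta\zeta$ is small enough to preserve the active-constraint pattern. Writing $u_r = \delta\Lambda_r/\rho_r - \tfrac12(\delta\Lambda_r/\rho_r)^2 + O(\delta^3)$ and using the identity $\delta\zeta_r = g_r\,\delta\Lambda_r + \rho_r\,\delta g_r$ (obtained by differentiating $\zeta_r = \Lambda_r g_r$), the first-order term of the numerator becomes $\sum_l \pi_l (A\,\delta\Lambda)_l = 0$, and the second-order terms collect to $\tfrac12 \sum_r g_r(\delta\Lambda_r)^2/\rho_r$. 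The denominator is $\sum_r (\delta\Lambda_r)^2/\rho_r + O(\delta^3)$, yielding $\Psi(\zeta) \le \tfrac12 \max_r g_r + o(1)$ as $\zeta\to\zeta^\circ$. Since $\sum_l c_l\pi_l = |n| = 1$ on the unit simplex, $g_r$ is uniformly bounded, giving a uniform local bound. A finite covering of the fixed-point locus, together with the compactness bound on its complement, produces the global constant $\zeta^*$.

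The main obstacle is precisely this local analysis near the invariant manifold, where both numerator and denominator vanish; the first-order cancellation in the numerator that makes the ratio bounded relies essentially on both the KKT structure of proportional fairness and the critical-load identity $A\rho = c$. Additional care is needed on degenerate strata (where some $\pi_l$ vanishes or where $\zeta^\circ$ lies on the boundary of a face): these correspond to replacing $\route_+$ by a strict subset and are covered by applying the same argument on the resulting lower-dimensional face, after which taking the maximum of the finitely many bounds gives the universal constant $\zeta^*$.
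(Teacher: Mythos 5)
Your proposal takes a genuinely different route from the paper, and while the structure is plausible, there are real gaps in the local analysis that would need to be closed.

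The paper's proof avoids any local expansion. Starting from Proposition~\ref{prop:lyp-derivative-bound}, it multiplies each (nonnegative) summand by $\zeta_r(p)/\zeta^* \le 1$, then uses the KKT identity $p_r = \zeta_r(p)\Phi_r(p)$ to rewrite the sum as $\sum_{r\in\route_+}(p_r - q_r)\log(\Phi_r(p)/\rho_r)$ with $q_r := \zeta_r(p)\rho_r$. The critical-loading identity $A\rho = c$ then makes $\rho$ the proportionally fair allocation for the auxiliary state $q$ (with the \emph{same} multipliers $\eta(p)$, since complementary slackness $\eta_l(A\rho - c)_l = 0$ holds automatically), so $\sum q_r \log\Phi_r(p) \le \sum q_r \log\rho_r$, i.e.\ $\sum q_r u_r \le 0$. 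This kills the $q$-term outright and gives the bound in one algebraic stroke. The boundedness of $\zeta_r(p)$ over the sphere (Lemma~\ref{lem:ration-bound}) is the \emph{only} compactness input needed.

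Your compactness/second-order-expansion argument aims at the same inequality $\sum_r p_r u_r \le \zeta^*\sum_r(\Lambda_r - \rho_r)u_r$, but via a ratio bound near the zero set of the denominator. Two concrete gaps: (i) The Taylor expansion of $\Lambda$ to second order is not licensed by what the paper has available — \cite{ReedZwart2013} gives only \emph{directional} differentiability on $(0,\infty)^R$, and \cite{KellyWilliams2004} only continuity; $\Lambda$ need not be $C^1$, let alone $C^2$, across changes of the active-constraint pattern of $A\gamma\le c$, and the cancellation of the first-order term $\sum_l \pi_l (A\,\delta\Lambda)_l$ relies on exactly the pattern stability that fails there. (ii) You identify the degenerate strata with ``replacing $\route_+$ by a strict subset,'' i.e.\ points on the simplex boundary. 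But the strata where smoothness can break are determined by the link-constraint activity pattern and its complementary-slackness structure, which can change in the \emph{interior} of the simplex; applying the argument on lower-dimensional faces of the simplex does not cover that degeneracy. Since all links are bottlenecks ($A\rho = c$), every constraint is tight on the invariant manifold, so the question of which $\pi_l$ are strictly positive — and whether they stay positive under perturbation — is exactly the delicate issue, and it is not isolated to the simplex boundary.

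The secondary ingredients you flag are sound: nonnegativity of both sides via feasibility of $\rho$; scaling reduction to the simplex via radial homogeneity; uniform boundedness of $g_r$ via $\sum_l \pi_l c_l = 1$. And if $\Lambda$ were known to be piecewise $C^2$ with well-behaved stratum boundaries, your expansion would produce the claimed local bound $\tfrac12\max_r g_r$. But as written, the smoothness hypothesis is unsupported and the stratum-boundary covering is only sketched; the paper's duality argument with the auxiliary state $q$ is worth internalizing precisely because it sidesteps these regularity concerns entirely.
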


%\JZ{Proposition \ref{prop:lyp-bound} never ref'ed.}

Define $p(t) = \frac{n(t)}{|n(t)|}$ with the convention that $0/0=0$.
By the scalability of $\Phi$ in \eqref{eq:scalability}, $\Phi_r(n(t))=\Phi_r(p(t))$.
% and the definition of the Lyapunov function \eqref{eq:lyapunov},
% \begin{equation}
%  \label{eq:tech-lyp-p}
%  f(t)
%  % = |n(t)| \sum_{r\in\route} p_r(t) \log \left(\frac{\Phi_r(p(t))}{\rho_r}\right)
%  = |n(t)| \sum_{r\in\route} \frac{p_r(t)}{\Phi_r(p(t))} \Phi_r(p(t))
%  \log \left(\frac{\Phi_r(p(t))}{\rho_r}\right).
% \end{equation}
% \JZ{It is not \eqref{eq:tech-lyp-p} that we need here. It is simply the scalability of $\phi$. If you agree with my change, you can delete the annotated text.}
%
According to \eqref{eq:R-Lambda} and \eqref{eq:fluid-def-R-<=}, if $t$ is a regular point, then $(\Lambda_r(n(t)))_{r\in \route_+(t)}$ also solves the optimization problem
\begin{equation}
\label{eq:opt-modified}
\max_\gamma \sum_{r\in \route_+(t)} n_r(t) \log(\gamma)
\end{equation}
subject to
\begin{equation}
\label{eq:opt-constr-modified}
\sum_{r\in \route_+} A_{l,r} \gamma_r \leq c_l - \sum_{r\in \route_0} A_{l,r} \Phi_r(p(t)) =: c_l(p(t)).
\end{equation}
Let $(\eta(p(t)))_{l\in\link}$ be the Lagrange multipliers satisfying the Karush-Kuhn-Tucker (KKT) conditions (c.f.\ Section~5.5.3 in \cite{BoydVandenberghe2004})  associated with the optimization problem \eqref{eq:opt-route}, and define
\begin{equation*}
   \zeta_r(p) = \sum_l A_{lr} \eta_l(p).
\end{equation*}
In the following lemma, we assume that all objects are at a regular time $t\ge 0$. Thus, we omit the parameter $t$ for notational simplicity.
\begin{lemma}
  \label{lem:ration-bound}
 For any $r$,
  \begin{equation*}
    \sup_{p: |p|=1, A\Phi(p)\leq c} \zeta_r(p) < \infty.
  \end{equation*}
\end{lemma}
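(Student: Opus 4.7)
The plan is to exploit the KKT conditions for the proportional fair optimization problem \eqref{eq:opt-route} at $n=p$ to bound each Lagrange multiplier $\eta_l(p)$ uniformly in $p$. Since $\zeta_r(p)=\sum_{l}A_{l,r}\eta_l(p)$ is a nonnegative combination of the $\eta_l(p)$'s with coefficients $A_{l,r}\in\{0,1\}$, any uniform bound on $\eta_l(p)$ transfers at once to $\zeta_r(p)$.

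First I would invoke the KKT stationarity condition \eqref{eq:kkt} at $n=p$: for every $r$ with $p_r>0$,
\begin{equation*}
  \frac{p_r}{\Lambda_r(p)} = \sum_{l \in \link} A_{l,r}\eta_l(p).
\end{equation*}
Multiplying by $\Lambda_r(p)$ and summing over $r$ (the routes with $p_r=0$ contribute nothing, since $\Lambda_r(p)=0$ by convention) gives
\begin{equation*}
  \sum_{r \in \route} p_r \;=\; \sum_{l \in \link}\eta_l(p)\sum_{r \in \route}A_{l,r}\Lambda_r(p).
\end{equation*}
Then I apply complementary slackness: for each $l\in\link$, either $\eta_l(p)=0$ or $\sum_{r}A_{l,r}\Lambda_r(p)=c_l$, so in either case $\eta_l(p)\sum_{r}A_{l,r}\Lambda_r(p)=\eta_l(p)\,c_l$. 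This yields the clean identity
\begin{equation*}
  \sum_{l \in \link}\eta_l(p)\, c_l \;=\; \sum_{r \in \route} p_r.
\end{equation*}

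Since $\eta_l(p)\ge 0$, $c_l>0$, and the right-hand side is bounded above by a constant depending only on $R$ whenever $|p|=1$ (by equivalence of norms on $\R^R$), each $\eta_l(p)$ is bounded by $\sum_r p_r / c_l$, uniformly in $p$. Summing the bounds over $l$ in $\zeta_r(p)=\sum_{l}A_{l,r}\eta_l(p)$ concludes the proof. I do not anticipate a genuine obstacle here; the only subtlety is careful bookkeeping between $\route_+(p)$ and $\route_0(p)$, to ensure that only the routes with $p_r>0$ enter the stationarity step, and the observation that the constraint $A\Phi(p)\le c$ appearing in the supremum is used only to guarantee that $p$ is a feasible fluid-model state — the bound itself rests purely on $\Lambda(p)$ through the KKT system for \eqref{eq:opt-route}, not on $\Phi(p)$.
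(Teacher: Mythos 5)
Your proof is correct, and it takes a genuinely different and cleaner route than the paper's. The paper argues by cases: for $r\in\route_0(t)$ it invokes assumption \eqref{eq:assum_lambda_pos} to show $\Phi_r(p)>0$, hence that links in $\link_0$ have slack in the restricted problem \eqref{eq:opt-modified}--\eqref{eq:opt-constr-modified}, hence $\eta_l(p)=0$ on $\link_0$ and $\zeta_r(p)=0$ there; for $r\in\route_+(t)$ it writes down the Lagrange dual of the restricted problem and uses a coercivity argument (via $c_l(p)>0$) together with finiteness of the number of index sets $\link_+$ to get compactness of the dual optimum. You instead combine stationarity with complementary slackness to obtain the single identity $\sum_l\eta_l(p)\,c_l=\sum_r p_r$, giving the explicit bound $\eta_l(p)\le(\sum_r p_r)/c_l$ uniformly in $p$; this bypasses the dual-coercivity argument and, notably, does not use \eqref{eq:assum_lambda_pos} at all. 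Two caveats are worth recording. First, you apply complementary slackness with the primal point $\Lambda(p)$ (which vanishes on $\route_0$), whereas the paper's $\eta(p)$ are constructed so that stationarity holds against $\Phi(p)$, which is strictly positive on $\route_0$. This is fine, but should be justified: both $\Lambda(p)$ and $\Phi(p)$ are optimal for \eqref{eq:opt-route} (the objective is insensitive to the $\route_0$ coordinates), Slater's condition gives strong duality, and therefore any dual-optimal $\eta$ satisfies complementary slackness with \emph{every} primal optimum, in particular with $\Lambda(p)$. Second, the paper's proof produces the additional fact $\zeta_r(p)=0$ for all $r\in\route_0(t)$, which is silently reused in the proof of Proposition~\ref{prop:lyp-bound} (where $q_r(t)=\zeta_r(p(t))\rho_r$ is asserted to vanish on $\route_0(t)$ ``due to Lemma~\ref{lem:ration-bound}''); your argument proves exactly what the lemma states but not that byproduct, so the $\route_0$ reasoning (and with it \eqref{eq:assum_lambda_pos}) is still needed downstream.
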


\begin{proof}
It follows from \eqref{eq:derivative-regular-0} and condition \eqref{eq:assum_lambda_pos} that $\Phi_r(p )\ge\frac{\lambda_r}{\mu_r} > 0$, for all $r\in\route_0 $.
for all regular $t\ge 0$, define
\begin{equation*}
  \link_0  = \{l\in\link: A_{r,l}>0 \textrm{ for some }r\in\route_0 \}.
\end{equation*}
Then
\begin{equation}
  \label{eq:tech-c-strict}
  c_l(p )<c_l
\end{equation}
for all $l\in\link_0 $.
We can see $\eta_l(p )=0$ for all $l\in\link_0 $ since the $l$th constraint in \eqref{eq:feasible-set-route} is not binding due to \eqref{eq:tech-c-strict} in this case. This implies $\zeta_r(p)=0$ for any regular $p$ with $p_r=0$.
%\JZ{If $n_r=0$, then $\gamma_r=0$, at the optimal for the original opt problem?}
%\BZ{yes}
%
%If $c_l(p )=0$ for regular $t\ge 0$, then $p_r =0$ (and therefore also $\Lambda_r(p )=0$) for all routes $r$ on link $l$ (i.e., all $r$ such that $A_{r,l}>0$).
To handle cases where $p_r>0$, we use duality.
Let
\begin{equation*}
  \link_+=\{l\in\link: A_{r,l}>0\textrm{ for some }r\in\route_+\}.
\end{equation*}
Note that both $\route_+$ and $\link_+$ are nonempty. Moreover,
\begin{equation}
  \label{eq:positive-residual-cap}
  c_l(p)>0 \textrm{ for all } l\in\link_+.
\end{equation}
The Lagrangian of the optimization problem \eqref{eq:opt-modified} with \eqref{eq:opt-constr-modified} can be written as
\begin{equation}
  \label{eq:lagrangian}
  \max_{\gamma_r,r\in \route_{+}} \sum_{r\in \route_+} p_r \log \gamma_r
  - \sum_{l\in \link_+} \eta_l \left(\sum_{r\in \route_+} A_{l,r} \gamma_r - c_l(p)\right).
\end{equation}
By the optimality condition
$\frac{p_r}{\gamma_r} = \sum_{l\in \link_+}  A_{l,r}\eta_l, \quad \textrm{ for all } r\in \route_+$
we obtain
\begin{align*}
  \sum_{l \in \link_+} \eta_l \sum_{r \in \route_+} A_{l,r} \frac{ p_r}{\sum_{{l' \in \link_+}} \eta_{l'} A_{l',r}}
  = \sum_{r \in \route_+} p_r \frac{\sum_{l \in \link_+} \eta_l  A_{l,r}}{\sum_{{l' \in \link_+}} \eta_{l'} A_{l',r}}
  = 1.
\end{align*}
So \eqref{eq:lagrangian} can be simplified as
\begin{align*}
\sum_{r \in \route_+} p_r \log p_r - \sum_{r \in \route_+} p_r \log\Big(\sum_{l \in \link_+} \eta_l A_{l,r}\Big) + \sum_{l \in \link_+} \eta_l c_l(p)
-1.
\end{align*}
% Given $p$, the Lagrange multipliers satisfy $\eta_l(p)=0$ if $c_l(p)=0$, and for $l\in \link_+$,
By duality, $\eta_l(p)$ solves the optimization problem
\begin{equation*}
  \inf_{\eta \geq 0} \left( \sum_{l \in \link_+} \eta_l c_l(p) - \sum_{r \in \route_+} p_r \log \Big(\sum_{l \in \link_+} \eta_l A_{l,r}\Big)\right)
\end{equation*}
which is equivalent to, using $\sum_{r \in \route_+} p_r=1$,
\begin{equation*}
  \sup_{\eta \geq 0} \sum_{r \in \route_+} p_r \left[ \log \Big(\sum_{l \in \link_+} \eta_l A_{l,r}\Big) -  \sum_{l \in \link_+} \eta_l c_l(p) \right].
\end{equation*}
It follows from \eqref{eq:positive-residual-cap} that $\left[ \log (\sum_{l\in \link_+} \eta_l A_{l,r}) -  \sum_{l\in \link_+} \eta_l c_l(p) \right]$ is negative when $\eta$ is outside a compact set.
This implies that $\eta(p)$ is necessarily uniformly bounded in $p$ for any fixed $\link_+$.
Since there are only finite choices ($2^{L}$) for $\link_+$, we must have $\sup_{p: |p|=1} |\eta(p)| <\infty$.
\end{proof}

\begin{proof}[Proof of Proposition~\ref{prop:lyp-bound}]
Let $t$ be a regular point.
By Lemma~\ref{lem:ration-bound}, let $\zeta^*<\infty$ be an upper bound of $\zeta_r(p(t))$ for all $p(t)$ such that $|p(t)|=1$ and $A\Phi(p(t))\leq c$.
Using \eqref{eq:scalability} and Proposition~\ref{prop:lyp-derivative-bound}, we have
\begin{align}
  \dot{f}(t)
  &\leq -\epsilon \sum_{r \in \route_+} (\Phi_r(p(t)) - \rho_r) \log \left(\frac{\Phi_r(p(t))}{\rho_r}\right)\nonumber\\
  &\leq -\frac{\epsilon}{\zeta^*} \sum_{r \in \route_+} \zeta_r(p(t)) (\Phi_r(p(t)) - \rho_r) \log \left(\frac{\Phi_r(p(t))}{\rho_r}\right)
  \label{eq:tech-lyp-der-bd}
\end{align}
In the proof, $\epsilon$ may change from step to step but remains strictly positive.
By the KKT conditions, $p_r(t) = \zeta_r(p(t)) \Phi_r(p(t))$ for all $r\in\route_+(t)$. Define $q_r(t) = \zeta_r(p(t))\rho_r$ for all $r\in\route$. Observe that $q_r(t)=0$ for all $r\in\route_0(t)$ due to Lemma~\ref{lem:ration-bound}.
Then \eqref{eq:tech-lyp-der-bd} becomes
\begin{equation}
  \label{eq:tech-lyp-der-bd-2}
  -\frac{\epsilon}{\zeta^*} \sum_{r \in \route_+(t)} \left(p_r(t) - q_r(t)\right)
  \log \left(\frac{\Phi_r(p(t))}{\rho_r}\right).
\end{equation}
Consider now the allocation $\Lambda(q)$, which is the solution to the program $\max_{\gamma} \sum_{r\in\route} q_r \log \gamma_r$ subject to $A\gamma \leq c$ and $\gamma_r=0$ if $q_r=0$.
The KKT conditions then read $q_r/\Lambda_r(q) = \sum_{l\in\link} A_{l,r} \eta_l(q)$, $\eta(q) (A\Lambda - c)=0$ for some $\eta(q)\ge 0$.
Since the network is critically loaded, i.e., $A\rho=c$, we may take the Lagrange multipliers $\eta(q)=\eta(p)$, and $\Lambda_r(q)=\rho_r$ if $q_r>0$.
From this, it follows that
\begin{equation*}
  \sum_{r\in \route_+(t)} q_r(t) \log \Phi_r(p(t)) \leq \sum_{r\in \route_+} q_r(t) \log \rho_r.
\end{equation*}
This together with \eqref{eq:tech-lyp-der-bd-2} implies
\begin{align*}
  \dot{f}(t)
  \leq -\frac{\epsilon}{\zeta^*} \sum_{r\in\route} p_r(t) \log \left(\frac{\Phi_r(p(t))}{\rho_r}\right)
  = -\frac{\epsilon}{\zeta^*}  \frac{1}{|n(t)|} f(t).
\end{align*}
\end{proof}

\subsection{Compactness and convergence to invariant manifold}

We first derive some additional properties of $f$, with the goal of connecting the end of our proof with \cite{Bramson1996b}.

\begin{proposition}
  \label{prop:further-bound}
  \begin{align*}
    f(0) &=  \sum_{r\in\route} n_r(0) \log \left(\frac{\Phi_r(n(0))}{\rho_r}\right)
    \leq |n(0)| \log \left( \frac{\max_lc_l}{\min_{r} \rho_{r}}\right),\\
    \dot{f}(t) &\leq -\epsilon \sum_{r\in\route} \left(\frac{\Phi_r(n(t))}{\rho_r}-1\right)^2,
  \end{align*}
  for some $\epsilon>0$ and almost every $t$.
\end{proposition}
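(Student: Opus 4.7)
The first bound is elementary. Every route uses at least one link, so for each $r$ I pick $l$ with $A_{l,r}=1$; feasibility \eqref{eq:fluid-def-R-<=} together with $\Phi_{r'}\ge 0$ then yields $\Phi_r(n(0))\le c_l\le \max_l c_l$. Since $\rho_r\ge \min_{r'}\rho_{r'}>0$, this produces the pointwise bound $\log(\Phi_r(n(0))/\rho_r)\le \log(\max_l c_l/\min_{r'}\rho_{r'})$. Multiplying by $n_r(0)\ge 0$, summing over $r$, and using $0\log 0=0$ yields the first inequality.

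For the second, I start from Proposition~\ref{prop:lyp-derivative-bound}. Since $\Lambda_r(n(t))=\Phi_r(n(t))$ on $\route_+(t)$, the bound there reads
\begin{equation*}
\dot f(t)\le -\epsilon_0\sum_{r\in\route_+(t)}\rho_r\left(\frac{\Phi_r(n(t))}{\rho_r}-1\right)\log\left(\frac{\Phi_r(n(t))}{\rho_r}\right).
\end{equation*}
I now invoke the elementary inequality $(x-1)\log x\ge \kappa(C)(x-1)^2$ on $(0,C]$: the function $x\mapsto\log(x)/(x-1)$, extended by $1$ at $x=1$, is strictly positive, continuous, and nonincreasing on $(0,\infty)$ (since $1-1/x-\log x\le 0$), blows up at $0^+$ and decays to $0$ at $+\infty$, so it attains a strictly positive minimum on any interval $(0,C]$. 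Setting $C_0:=\max_l c_l/\min_{r'}\rho_{r'}$ and using $\Phi_r\le\max_l c_l$ gives $\Phi_r/\rho_r\in(0,C_0]$ and therefore
\begin{equation*}
\dot f(t)\le -\epsilon_1\sum_{r\in\route_+(t)}\left(\frac{\Phi_r(n(t))}{\rho_r}-1\right)^2,\qquad \epsilon_1:=\epsilon_0\,\kappa(C_0)\,\min_r\rho_r>0.
\end{equation*}

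The final step is to extend this sum from $\route_+(t)$ to $\route$. Relation \eqref{eq:derivative-regular-0} presents $(\Phi_r)_{r\in\route_0(t)}$ as an affine image of $(\Lambda_r)_{r\in\route_+(t)}$, the linear part being $(I-(P^T)_{\route_0,\route_0})^{-1}$ times a fixed matrix; this inverse exists because, by \eqref{eq:assum_P_inv}, the nonnegative matrix $P^T$ has spectral radius strictly less than $1$, a property inherited by its principal submatrices. Because $\rho$ itself satisfies the global balance $\mu_r\rho_r=\lambda_r+\sum_s\mu_s\rho_s P_{s,r}$, substituting $\Lambda_{\route_+}=\rho_{\route_+}$ into this affine system forces $\Phi_{\route_0}=\rho_{\route_0}$ by uniqueness. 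Lipschitz continuity of the affine map, together with the fact that $\route_0(t)$ takes only finitely many configurations, produces a uniform constant $C_1$ with $|\Phi_{\route_0(t)}-\rho_{\route_0(t)}|\le C_1|\Phi_{\route_+(t)}-\rho_{\route_+(t)}|$. Coordinate-wise rescaling (using $\rho$ bounded away from $0$) then yields $\sum_{r\in\route_0(t)}(\Phi_r/\rho_r-1)^2\le C_2\sum_{r\in\route_+(t)}(\Phi_r/\rho_r-1)^2$, whence $\dot f(t)\le -\epsilon\sum_{r\in\route}(\Phi_r/\rho_r-1)^2$ with $\epsilon:=\epsilon_1/(1+C_2)>0$.

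The main obstacle is this last step: the square-deviation on the idle routes $\route_0(t)$ does not enter the derivative bound of Proposition~\ref{prop:lyp-derivative-bound} at all, and must be absorbed using the rigid affine coupling enforced on $\Phi_{\route_0}$ by the constraints $\dot n_r=0$ for $r\in\route_0$, together with the self-consistency of $\rho$. The preceding two steps---uniform bounds on $\Phi_r$ via feasibility and replacing $(x-1)\log x$ by a quadratic on a bounded range---are standard.
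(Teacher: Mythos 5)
Your proof is correct and follows essentially the same two-step strategy as the paper: first convert the $(\Phi_r-\rho_r)\log(\Phi_r/\rho_r)$ bound from Proposition~\ref{prop:lyp-derivative-bound} into a squared-deviation bound over $\route_+(t)$ via an elementary inequality together with the uniform bounds $\Phi_r\le\max_l c_l$ and $\rho_r\ge\min_{r'}\rho_{r'}>0$, and then use the balance relation at regular points to express $(\Phi_r-\rho_r)_{r\in\route_0(t)}$ as a fixed (configuration-dependent, but finitely many configurations) linear image of $(\Phi_r-\rho_r)_{r\in\route_+(t)}$, so the squared error on $\route_0(t)$ is controlled by that on $\route_+(t)$. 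The paper phrases the first step with $(a-b)\log(a/b)\ge(a-b)^2/\max\{a,b\}$ and the second via the explicit matrix $P^\dagger=(I-P^{0,0})^{-1}P^{+,0}$ plus Cauchy--Schwarz, while you use the positivity of $\log x/(x-1)$ on $(0,C]$ and a Lipschitz/finite-configurations argument; these are only cosmetic differences.
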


\begin{proof}
The first inequality is trivial. The second inequality is derived in two steps. Let $t$ be a regular point. We first note that
\begin{equation}
  \label{eq:tech-partial-+}
  \dot{f}(t) \leq -\epsilon \sum_{r\in \route_+} \left(\frac{\Phi_r(n(t))}{\rho_r}-1\right)^2,
\end{equation}
following from Proposition~\ref{prop:lyp-derivative-bound} and the inequality $(a-b)\log (a/b) \geq (a-b)^2/\max \{a,b\}$.
Again, the exact value of $\epsilon$ may change from step to step, but it will always be strictly positive.
The challenge is to extend this to the entire index set $r$, a task the rest of this proof is devoted to.

Set $d_r(t) = \mu_r\Phi_r(x(t))$. We see that
\begin{equation*}
  \dot{f}(t)\leq -\epsilon \sum_{r\in \route_+} \left(d_r(t)-\gamma_r\right)^2,
\end{equation*}
Note that
\begin{equation*}
  d_r(t) = \lambda_r + \sum_{r'\in\route} P_{r',r} d_{r'}(t)\quad \textrm{ for all }\ r\in\route_0(t).
\end{equation*}
On the other hand, we have
\begin{equation*}
  \gamma_r = \lambda_r + \sum_{r'\in\route} P_{r',r} \gamma_{r'}\quad \textrm{ for all }\ r\in\route.
\end{equation*}
So for all $r\in\route_0(t)$,
\begin{equation*}
  d_r(t)-\gamma_r = \sum_{r'\in\route} P_{r',r} (d_{r'}(t)-\gamma_{r'}).
\end{equation*}
We use this expression to say something about the vector $(d(t)-\gamma)|_{\route_0(t)}$, which is is formed by the coordinates of the vector $d(t)-\gamma$ corresponding to those coordinates $r\in \route_0(t)$.
Let $P^{0,0}$ be the matrix built up from all routing probabilities from $\route_0(t)$ to $\route_0(t)$ and let $P^{+,0}$ be the matrix consisting of routing probabilities from states $\route_+(t)$ to $\route_0(t)$.
Then
\begin{equation*}
  (d(t)-\gamma)|_{\route_0} = P^{0,0} (d(t)-\gamma)|_{\route_0} + P^{+,0} (d(t)-\gamma)|_{\route_+}.
\end{equation*}
Since $I-P$ is invertible, so is $I-P^{0,0}$ (where $I$ is of appropriate dimension) and we see that
\begin{equation*}
  (d(t)-\gamma)|_{\route_0} = (I-P^{0,0})^{-1} P^{+,0}(d(t)-\gamma)|_{\route_+} =: P^\dagger (d(t)-\gamma)|_{\route_+}.
\end{equation*}
The matrix $P^\dagger$ consists of nonnegative elements.
We conclude that for $r\in \route_0$,
\begin{equation}
  d_r(t) - \gamma_r = \sum_{r'\in \route_+} P^\dagger_{r'r} (d_{r'}(t)-\gamma_{r'}) \label{Phi-boundary}.
\end{equation}
The Cauchy-Schwarz inequality yields
\begin{align*}
   (d_r(t) - \gamma_r)^2
   \le  \sum_{r'\in \route_+} {P^\dagger_{r'r}}^2 (d_{r'}(t)-\gamma_{r'})^2
   \leq \|P^\dagger\|_\infty^2 \sum_{r'\in \route_+}  (d_{r'}(t)-\gamma_{r'})^2,
\end{align*}
where $\|P^\dagger\|_\infty$ denotes the largest element in the matrix $P^\dagger$.
Summing up over $r\in \route_0(t)$ yields
\begin{equation*}
  \sum_{r'\in \route_0(t)}  (d_{r'}(t)-\gamma_{r'})^2
  \leq
  \|P^\dagger\|_\infty^2 R \sum_{r'\in \route_+(t)}  (d_{r'}(t)-\gamma_{r'})^2.
\end{equation*}
Combining the above inequality and \eqref{eq:tech-partial-+} leads to the second inequality of this proposition.
\end{proof}

%With the above propositions at hand, we can now proceed with some of the arguments in \cite{Bramson1996b}.

\begin{proof}[Proof of Theorem~\ref{thm:fluid-conv-inv}]
Bramson's proof of his Proposition~6.1 also applies to our setting if we set $d_r(t) = \left(\frac{\Phi_r(n(t))}{\rho_r}-1\right)$,
and the same holds for his {Proposition~6.2}, using Proposition 5.3 at various points in his line of argument. We omit the details.
This guarantees the existence of a constant $B$ such that for all $t\geq 0$,
\begin{equation*}
  |n(t)|\leq B |n(0)|.
\end{equation*}
Following from the above and Proposition~\ref{prop:lyp-bound}, there exists $\epsilon>0$ such that
\begin{equation*}
  f(t) \leq f(0) \exp\{-\epsilon t/|n(0)|\}.
\end{equation*}
From (6.26)--(6.28) of \cite{Bramson1996b} we then obtain that
\begin{equation*}
  |n(t) - n(t')| \leq B |n(0)| \exp\{ - \epsilon t/|n(0)|\},
\end{equation*}
for appropriate constants $\epsilon, B$, for all $t>0$ and $t'>t$. Consequently, $n(t), t\geq 0$ is a Cauchy sequence, and converges to some $n(\infty)$.
The last equation implies
\begin{equation*}
  |n(t) - n(\infty)| \leq B |n(0)| \exp\{ - \epsilon t/|n(0)|\}.
\end{equation*}
i.e.\ convergence is exponentially fast, u.o.c.\ in $|n(0)|$.
Since $f(x)$ is lower semi-continuous (cf.\ Theorem 1 in \cite{Massoulie2007}), we see that
\begin{equation*}
  0\leq L(n(\infty)) \leq \liminf_{t\rightarrow\infty} L(n(t)) = \lim_{t\rightarrow\infty} f(t)=0.
\end{equation*}
Consequently,
\begin{equation*}
  0 = \sum_r n_r(\infty) \log (\Phi_r(n(\infty))/\rho_r)
  = \sum_{r: n_r(\infty)>0} n_r(\infty) \log (\Lambda_r(n(\infty))/\rho_r).
\end{equation*}
Since $\sum_r n_r(\infty) \log (\Lambda_r(n(\infty))) \geq \sum_r n_r(\infty) \log (\Lambda_r')$
for any feasible $\Lambda'$, since $\Lambda (n(\infty))$ is the unique optimum of the PF utility maximization problem, it follows that $\Phi_r(n(\infty))= \Lambda_r(n(\infty)) = \rho_r$ if $n_r(\infty)>0$.
If $n_r(\infty)=0$, an additional argument is needed to show that $\Phi_r(n(\infty))=\rho_r$.

Observe that $n(\infty)$ is an invariant point, since $n(t)$ and $n(t+s)$ both converge to $n(\infty)$ for every fixed $s$ as $t\rightarrow\infty$, and $(n(t+s))_s$ can be seen as time-shifted fluid model with starting point $n(t)$. Since fluid model solutions are regular almost everywhere, a fluid model solution with starting position $n(\infty)$ is regular everywhere. This enables us to apply equation \eqref{Phi-boundary} with $t=\infty$ to conclude that $\Phi_r(n(\infty))=\rho_r$ when $n_r(\infty)=0$.
Consequently, $n(\infty)$ is on the invariant manifold.
\end{proof}

\section{Diffusion approximations}
\label{sec:diff-appr}

% ------------------ brief intro of this section ------------------
The main objective of this section is to study the network  in  heavy traffic to establish the diffusion approximation, stated in Theorem~\ref{thm:diffusion} below.
The main difficult is that the DCP in Section~\ref{sec:geometry-fixed-point} does not hold for the  stochastic system, however it holds only asymptotically in the heavy traffic regime, in a sense we make precise later on.
To this end, we establish  state space collapse (SSC) in Section~\ref{sec:state-space-collapse}, which shows that the diffusion scaled workload process will be close to the invariant manifold and the DCP is satisfied asymptotically (Proposition~\ref{prop:ssc-ac}(ii)).
Using the framework of \cite{Bramson1998}, we prove SSC using a uniform fluid approximation  shown in \ref{sec:unif-fluid-appr}, and the convergence to the invariant state of the fluid model as we have shown in Section~\ref{sec:fluid-model}.

% ------------------------------------------------------------------

Our heavy-traffic assumption is, as $k\to\infty$,
\begin{align}
  \label{eq:fluid-HT}
  & \lambda^k \to \lambda,\\
  \label{eq:diffusion-HT}
  & k(\rho-\rho^k)\to\theta,
\end{align}
for some $\lambda$ and $\theta\in\R_+^R$.
By \eqref{eq:rho-phase}, this implies $k(\ext\rho_{r,f}-\ext\rho^k_{r,f})\to\ext\theta_{r,f}$ for some $\ext\theta_{r,f}\ge 0$ as $k\to\infty$.
The diffusion scaling is defined as
\begin{equation*}
  % \label{eq:def-diffusion-status}
  \ds{\ext N}(t) = \frac{1}{k} \ext N^k(k^2t),
  \quad
  \ds{\ext W}(t) = \frac{1}{k} \ext W^k(k^2t),
\end{equation*}
and the diffusion scaling for the process quantities is defined as
\begin{equation*}
  % \label{eq:def-diffusion-process}
  \ds{\ext E}(t) = \frac{1}{k}\breve{\ext E}^k(k^2t),\
  \quad
  \ds{\ext S}(t) = \frac{1}{k}\breve{\ext S}^k(k^2t).
\end{equation*}
The definition of the scaling for the corresponding route-level quantities are defined in exactly the same way. Following the above definition, we have the following diffusion scaling
\begin{align}
  \label{eq:X-center-scaled}
  \ds{\ext X}(t) &= \diag{\ext m}(I-\ext P^T)^{-1}\ds{\ext E}(t)
  + k(\ext\rho^k - \ext\rho) t\nonumber\\
  &\quad + \diag{\ext m} (I-\ext P^T)^{-1}
           \left[
             \ds{\ext S}_+\big(\fss{\ext D}(t)\big)-\ds{\ext S}_-\big(\fss{\ext D}(t)\big)
           \right],\\
  \ds{\ext Y}(t) &= \frac{1}{k}\ext A\int_0^{k^2t} [\ext \rho - \ext \Lambda (\ext N^k(s)] ds,\nonumber\\
  \ds{\ext Z}(t) &= \frac{1}{k}\ext H^T\int_0^{k^2t} [\ext \rho - \ext \Lambda (\ext N^k(s)] ds,\nonumber
\end{align}
where $\fss{\ext D}(t)=\ext D^k(k^2 t)/k^2$.
The diffusion scaled process still satisfies the dynamic equation \eqref{eq:dynamics-k-workload-decomp}.
We will not copy it, but later refer to it as the diffusion scaled version of \eqref{eq:dynamics-k-workload-decomp}.

\begin{theorem}
  \label{thm:diffusion}
  Assume conditions \eqref{eq:cond-phase-route-init}, \eqref{eq:P-phase-inv}, \eqref{eq:multi-bottleneck} and \eqref{eq:fluid-HT}--\eqref{eq:diffusion-HT} and the diffusion scaled initial state converges weakly as $k\to\infty$
  \begin{equation}
    \label{eq:initial-cond}
    \ds{\ext W}(0) \dto \chi_0 \in\invw.
  \end{equation}
  The stochastic processes under the proportional fair allocation policy converge weakly as $k\to\infty$
  \begin{equation*}
    \left(\ds{\ext X}(\cdot),\ds{\ext W}(\cdot), \ds Y(\cdot), \ds Z(\cdot)\right)
    \dto
    \left(\hat{\ext X}(\cdot),\Phi(\hat{\ext X})\right),
  \end{equation*}
  where $\hat{\ext X}(\cdot)$ is a Brownian motion with drift $-\ext\theta$ and covariance matrix
  \begin{equation}
    \label{eq:COV}
    \Sigma_X = \diag{\ext m} (I-\ext P^T)^{-1}\left( \diag {\lambda \ext a} + \Sigma_U\right) (I-\ext P)^{-1} \diag{\ext m},
  \end{equation}
  where
  \begin{equation}
    \label{eq:COV-S}
    \Sigma_U = \diag{ (I + \ext P^T) (\ext \rho\cdot \ext{\mu})} -\ext P^T \diag {\ext \rho\cdot \ext{\mu}} - \diag {\ext \rho \cdot \ext{\mu}} \ext P,
  \end{equation}
  $(\lambda \ext{a})_{r,f}=\lambda_r\ext{a}_{r,f}$ and $(\ext{\rho} \cdot \ext{\mu})_{r,f}=\ext{\rho}_{r,f} \ext{\mu}_{r,f}$.
\end{theorem}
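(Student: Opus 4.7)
The plan is to follow the state-space collapse (SSC) methodology of \cite{Bramson1998}, adapting the arguments of \cite{YeYao2012} and \cite{KKLW2009}. The proof decomposes into three ingredients: a functional central limit theorem (FCLT) for the driving process $\ds{\ext X}$, state-space collapse for the diffusion-scaled workload $\ds{\ext W}$, and an application of the continuous mapping theorem via the Skorokhod map $\Phi$ associated with the DCP \eqref{eq:dcp-main}--\eqref{eq:dcp-H}.

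First I would establish $\ds{\ext X}(\cdot)\dto \hat{\ext X}(\cdot)$. The deterministic piece $k(\ext\rho^k-\ext\rho)t$ in \eqref{eq:X-center-scaled} contributes the drift $-\ext\theta t$ by \eqref{eq:diffusion-HT}. For the centered stochastic pieces, the FCLT for Poisson processes gives weak convergence of $\ds{\ext E}(\cdot)$ to a Brownian motion with covariance matrix $\diag{\lambda\ext a}$, while a random time change argument combined with the fluid limit $\fss{\ext D}_{r,f}(t)\to \ext\rho_{r,f}\ext m_{r,f}\, t$ (a consequence of the uniform fluid approximation in Section~\ref{sec:unif-fluid-appr}) gives the corresponding Brownian limit for $\ds{\ext S}_+(\fss{\ext D})-\ds{\ext S}_-(\fss{\ext D})$. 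Assembling all components according to \eqref{eq:center-S-+}--\eqref{eq:center-S--}, using the independence of the underlying Poisson processes, and multiplying by $\diag{\ext m}(I-\ext P^T)^{-1}$ on both sides, a direct computation yields $\Sigma_X$ in the form \eqref{eq:COV}--\eqref{eq:COV-S}.

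The main obstacle is state-space collapse, i.e., showing that $\sup_{t\le T}\dist(\ds{\ext W}(t))\dto 0$ for every $T>0$. Here I would invoke the SSC framework of \cite{Bramson1998}, refined for proportional fairness in \cite{YeYao2012}. A hydrodynamic rescaling of the pre-limit process produces, along subsequences, limit points that are fluid model solutions in the sense of Definition~\ref{def:fluid}; Theorem~\ref{thm:fluid-conv-inv} then guarantees that every such solution with initial condition of norm at most $M$ reaches an $\epsilon$-neighborhood of $\invw$ within a deterministic time $T_{M,\epsilon}$, uniformly over its starting position. This uniform relaxation rate, combined with a standard discretization of the diffusion time interval $[0,T]$ into blocks of vanishing length, propagates to SSC in the diffusion scale. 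It is precisely the possible discontinuity of the proportional fair allocation $\ext\Lambda$ on the boundary of the nonnegative orthant that prevents us from relying on pointwise Lipschitz estimates, and forces the use of the uniform convergence provided by Theorem~\ref{thm:fluid-conv-inv}.

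With SSC in hand, Lemma~\ref{lem:reflection-boundary} ensures that each link operates at full capacity whenever the corresponding shadow price is bounded away from zero; this is the asymptotic analogue of the complementarity condition \eqref{eq:dcp-compli}, while SSC itself provides the asymptotic analogue of \eqref{eq:dcp-H}. Combined with the diffusion-scaled version of \eqref{eq:dynamics-k-workload-decomp} and the monotonicity of $\ds{\ext Y}$, this shows that $(\ds{\ext W},\ds Y,\ds Z)$ is asymptotically the image of $\ds{\ext X}$ under the Skorokhod map $\Phi$. Since $\Phi$ is continuous, as the DCP reduces to a standard Skorokhod problem in the coordinates $\ext w_G=G^T\ext w$ (cf.\ the discussion following \eqref{eq:dcp-main-alt}), the continuous mapping theorem together with the FCLT for $\ds{\ext X}$ delivers the claimed joint convergence $(\ds{\ext X},\ds{\ext W},\ds Y,\ds Z)\dto (\hat{\ext X},\Phi(\hat{\ext X}))$.
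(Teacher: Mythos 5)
Your decomposition -- FCLT for $\ds{\ext X}$, state space collapse driven by the uniform fluid convergence of Theorem~\ref{thm:fluid-conv-inv}, asymptotic complementarity via Lemma~\ref{lem:reflection-boundary}, and the DCP/Skorokhod structure -- is exactly the route the paper takes. Two remarks. First, a small slip: the fluid limit of the scaled cumulative allocation is $\fss{\ext D}_{r,f}(t)\to \ext\rho_{r,f}\,t$, not $\ext\rho_{r,f}\ext m_{r,f}\,t$; the extra factor $\ext m_{r,f}$ does not belong, and it is the former that produces, via the random time change, the Brownian motions $\hat{\ext S}_{r,f,f'}(\ext\rho_{r,f}t)$ whose variances feed into \eqref{eq:COV-S}.

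Second, and more substantively, the final passage from ``the pre-limit approximately satisfies the DCP'' to weak convergence is thinner than in the paper. You assert that $(\ds{\ext W},\ds Y,\ds Z)$ is ``asymptotically the image of $\ds{\ext X}$ under the Skorokhod map $\Phi$'' and then invoke continuity of $\Phi$ plus the continuous mapping theorem. But the pre-limit workload is not $\Phi(\ds{\ext X})$: it is generated by the proportional-fair dynamics and satisfies \eqref{eq:dcp-posi}, \eqref{eq:dcp-compli}, \eqref{eq:dcp-H} only approximately. Continuity of $\Phi$ alone does not close this gap; one needs either a Lipschitz perturbation estimate for the reflection map, or a compactness-and-identification argument. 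The paper takes the second route: boundedness of $\ds Y$ (Proposition~\ref{prop:ssc-ac}(c)) yields subsequential limits for $\ds Y$ by Helly's selection theorem, the oscillation bound \eqref{eq:osc-bound} forces the limiting $\hat{\ext W}$ and $\hat Y$ to be continuous, the SSC and asymptotic complementarity of Proposition~\ref{prop:ssc-ac} show the limit solves the DCP exactly, and uniqueness of the DCP solution upgrades subsequential convergence to full convergence. Either route can be made rigorous, but as written your last step is asserting the conclusion rather than deriving it, and you should either cite the Lipschitz property of the Skorokhod map on $\ext w_G$-coordinates and propagate the error bounds, or adopt the Helly/uniqueness argument.
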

The proof of this theorem is postponed to the end of this section.

\subsection{Uniform fluid approximations}
\label{sec:unif-fluid-appr}

We follow the approach and terminology of \cite{Bramson1998}.
The shifted fluid scaling for ``status'' quantities is defined as
\begin{equation*}
  \sfs{j}{U}(t) = \frac{1}{k} U^k(kj+kt)
\end{equation*}
where $U^k$ could be any of the processes $\ext N^k$, $\ext W^k$, $\ext D^k$ and $Y^k$.
The shifted fluid scaling for ``process'' quantities is defined as
\begin{equation*}
  \sfs{j}{U}(t) = \frac{1}{k} [U^k(kj+kt)-U^k(kj)]
\end{equation*}
where $U^k$ is a symbolic notation for $\breve{\ext E}^k$ and $\breve{\ext S}^k$.
%\ins{Note there is a difference while scaling the ``status'' and ``process'' quantities, see (5.2) in \cite{Bramson1998}.}
%
To connect the shifted fluid scaling and diffusion scaling, consider the diffusion scaled process on the interval $[0, T]$, which corresponds to the interval $[0, k^2T]$ for the unscaled process. Fix a constant $L>1$, the interval will be covered by the $\fl{kT} + 1$ overlapping intervals
\begin{equation*}
  [kj,kj+kL],\quad j=1,2,\ldots, \fl{kT}.
\end{equation*}
For each $t\in[0,T]$, there exists a $j\in\{0,...,\fl{kT}\}$ and $s\in[0,L]$ (which may
not be unique) such that $k^2t = kj + ks$. Thus,
\begin{equation}
  \label{eq:diffusion-fluid}
  \ds X(t) = \sfs{j}{X}(s).
\end{equation}

To utilize the shifted fluid scaled processes to analyze the diffusion scaled processes, we present a uniform fluid approximation, which is similar to \cite[Lemma~12]{YeYao2012}.

\begin{proposition}
  \label{prop:fluid-approx}
  Assume \eqref{eq:fluid-HT} and the existence of $M>0$ such that the initial state $|\bar{\ext N}^{k,j_k}(0)|<M$ for all $k$, where $j_k$ is an integer in $[0,kT]$.
  For any subsequence of $\{k\}_0^\infty$ there exists subsequence $\mathcal K$ along which $(\bar{\ext N}^{k,j_k},\bar{\ext W}^{k,j_k},\bar{\ext D}^{k,j_k},\bar{Y}^{k,j_k})$ converges with probability 1 \uoc to the fluid limit $(\bar{\ext N},\bar{\ext W},\bar{\ext D},\bar{Y})$ that satisfies the fluid model equations \eqref{eq:fluid-def}--\eqref{eq:fluid-def-Y}.
\end{proposition}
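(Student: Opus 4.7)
The plan is to follow the standard approach for establishing fluid limits, combining a functional strong law of large numbers (FSLLN) for the driving Poisson processes with equicontinuity arguments, and then identifying any subsequential limit as a fluid model solution in the sense of Definition~\ref{def:fluid}. The result is analogous to Lemma~12 of \cite{YeYao2012}; the additional work stems from the presence of phases, and from the requirement \eqref{eq:fluid-def-R-<=}, which, being easily inherited from the prelimit constraint $\ext A \ext \Lambda \le c$, only needs to be carefully tracked.

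First I would invoke the FSLLN for unit rate Poisson processes to conclude that for almost every sample path,
\begin{equation*}
\sup_{t\in [0,L]}\Bigl|\tfrac{1}{k}\breve{\ext E}^k(k\cdot)\Bigr| \to 0
\quad\text{and}\quad
\sup_{t\in [0,L]}\Bigl|\tfrac{1}{k}\breve{\ext S}^k(k\cdot)\Bigr| \to 0,
\end{equation*}
using the heavy-traffic assumption \eqref{eq:fluid-HT} and the fact that $\ext D^k_{r,f}(t)\le c_{\max}t$ is uniformly bounded. Combined with $\lambda^{k}\to\lambda$, these FSLLNs imply that the centered ``process'' parts in \eqref{eq:dynamics-k} and \eqref{eq:centered-k} vanish u.o.c. in the shifted fluid scaling. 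Next, from \eqref{eq:accum-allocation} and the feasibility constraint $\ext A \ext \Lambda(\ext N^k) \le c$, the allocation process $\ext D^k$ is Lipschitz in time with a deterministic constant, so $\sfs{j_k}{\ext D}$ is uniformly Lipschitz; by \eqref{eq:dynamics-k} then so are $\sfs{j_k}{\ext N}$, and by \eqref{eq:workload-phase} so are $\sfs{j_k}{\ext W}$. Similarly $\sfs{j_k}{Y}$ is Lipschitz by its definition. Together with the initial bound $|\bar{\ext N}^{k,j_k}(0)|<M$, this gives uniform boundedness on every compact interval.

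Equicontinuity plus uniform boundedness yields, via Arzelà–Ascoli, that almost every sample path admits a further subsequence $\mathcal K$ along which $(\sfs{j_k}{\ext N},\sfs{j_k}{\ext W},\sfs{j_k}{\ext D},\sfs{j_k}{Y})$ converges u.o.c. to some Lipschitz limit $(\bar{\ext N},\bar{\ext W},\bar{\ext D},\bar{Y})$. Passing to the limit along $\mathcal K$ in the scaled version of \eqref{eq:dynamics-k} and using the vanishing of the centered terms, one obtains
\begin{equation*}
\bar{\ext N}_{r,f}(t) = \bar{\ext N}_{r,f}(0)+\lambda_r \ext{a}_{r,f} t + \sum_{f'\in\phase_r}\ext{\mu}_{r,f'}P^r_{f',f}\bar{\ext D}_{r,f'}(t) - \sum_{f'\in\phase_r\cup\{0\}}\ext{\mu}_{r,f} P^r_{f,f'} \bar{\ext D}_{r,f}(t),
\end{equation*}
so the route-aggregated version yields \eqref{eq:fluid-def} after differentiation, with $\Phi_r(\bar n(t))$ being the a.e. derivative of the route-level allocation $\bar D_r = \sum_f \bar{\ext D}_{r,f}$. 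Equations \eqref{eq:fluid-def-W} and \eqref{eq:fluid-def-Y} follow directly from taking limits in \eqref{eq:workload-phase} and \eqref{eq:Y-k}, and \eqref{eq:fluid-def-R-<=} is preserved in the limit since $\ext A \ext \Lambda(\ext N^k)\le c$ pointwise.

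The main obstacle is verifying \eqref{eq:R-Lambda}, i.e., identifying $\Phi_r(\bar n(t))=\Lambda_r(\bar n(t))$ on the open set $\{r : \bar n_r(t)>0\}$ and $\Phi_r(\bar n(t)) \in [0,\limsup_{y\to \bar n(t)}\Lambda_r(y)]$ where $\bar n_r(t)=0$. On the positivity set, one uses the continuity of $\Lambda$ on $(0,\infty)^R$ established in \cite{KellyWilliams2004} together with the prelimit identity $\ext\Lambda_{r,f}(\ext N^k)=\tfrac{\ext N^k_{r,f}}{N^k_r}\Lambda_r(N^k)$: since $\bar n_r(t)>0$ in a neighborhood by continuity, $N^{k,j_k}_r(s)\to \infty$ along $\mathcal K$ in the right scaling, and $\Lambda_r(N^{k,j_k}(s))=\Lambda_r(\sfs{j_k}{N}(s))$ by radial homogeneity converges to $\Lambda_r(\bar n(s))$, giving $\dot{\bar D}_r(s)=\Lambda_r(\bar n(s))$ a.e. on that set. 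On the boundary set $\{\bar n_r(t)=0\}$, the limit value of $\dot{\bar D}_r$ is merely an a.e.\ accumulation point of $\Lambda_r$ evaluated near $\bar n(t)$; the upper bound $\limsup_{y\to \bar n(t)}\Lambda_r(y)$ follows because one can extract a sequence $y^k\to \bar n(t)$ realizing this limsup, and $\dot{\bar D}_r$ cannot exceed it in the scaling limit. This step mirrors the corresponding argument in \cite[Lemma~12]{YeYao2012} and is where the phase-level bookkeeping \eqref{eq:ext-colum-interpre}–\eqref{eq:link-phase-matrix} needs to be carefully carried through, but presents no new conceptual difficulty beyond what is already handled there.
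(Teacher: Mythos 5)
Your overall route matches the paper's: FSLLN for the Poisson primitives, an Arzel\`a--Ascoli compactness argument for the shifted fluid-scaled processes, passing to the limit in the integral form of the dynamics, and then identifying the derivative of $\bar{\ext D}$ with $\Lambda$ on the positivity set and with an element of $[0,\limsup_{y\to\bar n(t)}\Lambda_r(y)]$ on the boundary, plus noting that $\ext A\Phi\le c$ is inherited from the prelimit constraint.

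Two steps are stated in a way that would not survive scrutiny as written. First, you assert that $\sfs{j_k}{\ext N}$ is Lipschitz ``by \eqref{eq:dynamics-k}'' because $\sfs{j_k}{\ext D}$ is Lipschitz; this is not literally true, since the counting processes $\ext E$ and $\ext S$ contribute jumps, so the shifted fluid-scaled queue-length process is only \emph{asymptotically} Lipschitz (Lipschitz integral terms plus an error $\fs\epsilon_{r,f}$ that vanishes u.o.c.). The paper handles this by invoking a variation of the Arzel\`a--Ascoli theorem (\cite[Lemma~6.3]{YeOuYuan2005}) rather than the classical version. Second, your treatment of the boundary set $\{\bar n_r(t)=0\}$ is a handwave: stating that $\dot{\bar D}_r$ ``is an accumulation point of $\Lambda_r$'' and ``cannot exceed'' the limsup because ``one can extract a sequence realizing this limsup'' is not an argument---realizing the limsup along one sequence says nothing about the Lebesgue derivative of $\bar D_r$, which is obtained from integrals of $\Lambda_r$ along the actual prelimit trajectories. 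What is needed, and what the paper supplies, is a reverse-Fatou bound $\lim_k\int_t^{t+h}\Lambda_r(\sfs{j_k}{\ext N}(s))\,ds\le\int_t^{t+h}\limsup_{y\to\bar{\ext N}(s)}\Lambda_r(y)\,ds$, followed by the upper semi-continuity of $x\mapsto\limsup_{y\to x}\Lambda_r(y)$ to collapse the iterated limsups to $\limsup_{y\to\bar{\ext N}(t)}\Lambda_r(y)$. You should spell out these two steps explicitly rather than delegating to an ``analogous'' argument in \cite{YeYao2012}.
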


\begin{proof}
   Following \cite[Proposition~4.2]{Bramson1998} and \cite[Appendix~A.2]{Stolyar2004}, using Chebyshev's inequality and the Borel-Cantelli lemma, we have that, as $k\to\infty$,
  \begin{align*}
    &\sup_{s\in[0,kT]}\sup_{t\in[0,L]}|\frac{1}{k}\breve{\ext E}^k(ks+kt)|\to 0,\\
    &\sup_{s\in[0,kT]}\sup_{t\in[0,L]}|\frac{1}{k}\breve{\ext S}^k(ks+kt)|\to 0,
  \end{align*}
  a.s.\ (almost surely) for any fixed $T>0$ and $L>0$. This implies that a.s.\, as $k\to\infty$,
  \begin{align*}
    \max_{j\in kT}\sup_{t\in[0,L]}\big(\sfs{j}{\breve{\ext E}}(t),\sfs{j}{\breve{\ext S}}(t)\big)
    \to (\ext 0,\ext 0).
  \end{align*}
  From this point, we can apply exactly the same approach as in \cite[Appendix~A.1]{Massoulie2007} to obtain the fluid approximation. Applying the shifted fluid scaling to the system dynamics equations \eqref{eq:dynamics-k} and \eqref{eq:accum-allocation} and the scalability of $\ext\Lambda_{r,f}(\cdot)$, we have
  \begin{align*}
    \sfs{j}{\ext N}_{r,f}(t) &= \sfs{j}{\ext N}_{r,f}(0) + \lambda_r \ext{a}_{r,f}t
    + \sum_{f'\in\phase_r}\ext{\mu}_{r,f'} P^r_{f',f} \int_0^t \ext \Lambda_{r,f'}\big(\sfs{j}{\ext N}(s)\big)ds\\
    & \quad - \ext{\mu}_{r,f} \int_0^t \ext \Lambda_{r,f}\big(\sfs{j}{\ext N}(s)\big)ds
    + \fs\epsilon_{r,f}(t),
  \end{align*}
  where, recalling the notations defined in \eqref{eq:center-E}--\eqref{eq:center-S--},
  \begin{align*}
    \sup_{t\in[0,L]}|\fs\epsilon_{r,f}(t)|
    &\le \sup_{s\in[0,kT]}\sup_{t\in[0,L]}\frac{1}{k}|\breve{\ext E}^k_{r,f}(ks+kt)|\\
    &\quad +\sup_{s\in[0,kT]}\sup_{t\in[0,L]}\frac{1}{k}\sum_{f'\in\phase\cup\{0\}}|\breve{\ext S}^k_{r,f,f'}(ks+kt)|
    +\sup_{s\in[0,kT]}\sup_{t\in[0,L]}\frac{1}{k}\sum_{f'\in\phase}|\breve{\ext S}^k_{r,f',f}(ks+kt)|.
  \end{align*}
  This implies $\sup_{t\in[0,L]}|\fs\epsilon_{r,f}(t)| \to 0$ a.s. as $k\to\infty$.
  Since we assume that $|\bar{\ext N}^{k,j}(0)|<M$ for all $j,k$, by a variation of the Arzela-Ascoli theorem (see \cite[Lemma~6.3]{YeOuYuan2005}), for any sub-sequence there exists a further sub-sequence such that, as $k\to\infty$, almost surely,
  \begin{align}
    \label{eq:tech-conv-to-D}
    \int_0^t \ext \Lambda_{r,f}\big(\sfs{j}{\ext N}(s)\big)ds &\to \bar{\ext D}_{r,f'}(t)\quad \textrm{\uoc{} on }[0,L],\\
    \sfs{j}{\ext N}_{r,f}(t)(t) &\to \bar N(t)\quad \textrm{\uoc{} on }[0,L],\nonumber
  \end{align}
  where
  \begin{equation*}
    \bar{\ext N}_{r,f}(t) = \bar{\ext N}_{r,f}(0) + \lambda_r \ext{a}_{r,f}t
    + \sum_{f'\in\phase_r}\ext{\mu}_{r,f'} P^r_{f',f} \bar{\ext D}_{r,f}(t)
    - \ext{\mu}_{r,f}\bar{\ext D}_{r,f}(t).
  \end{equation*}
  To avoid complicating the notation, we still use $k$ to index the sub-sequence. By Rademacher's theorem, $\bar{\ext D}_{r,f}(t)$ is differentiable almost every where on $[0,L]$. For any differentiable point $t$, if $\bar{\ext N}_{r,f}(t)>0$, then $\Lambda_{r,f}(\cdot)$ is continuous at $\bar{\ext N}(t)$ according to \cite[Lemma~6.2(b)]{YeOuYuan2005}. Thus, there exists an $h>0$ such that $\bar{\ext N}_{r,f}(s)>0$ for all $s\in[t,t+h]$ and as $k\to\infty$,
  \begin{equation*}
    \int_t^{t+h} \ext \Lambda_{r,f}\big(\sfs{j}{\ext N}(s)\big)ds
    \to
    \int_t^{t+h} \ext \Lambda_{r,f}\big(\bar{\ext N}(s)\big)ds.
  \end{equation*}
  If $\bar{\ext N}_{r,f}(t)=0$, then by Fatou's lemma,
  \begin{equation*}
    \lim_{k\to\infty}\int_t^{t+h} \ext \Lambda_{r,f}\big(\sfs{j}{\ext N}(s)\big)ds
    \le
    \int_t^{t+h} \limsup_{y\to\bar{\ext N}(s)}\ext \Lambda_{r,f}(y)ds.
  \end{equation*}
  On the other hand, the function $x\to\limsup_{y\to x}\ext \Lambda_{r,f}(y)$ is upper semi-continuous, thus
  \begin{equation*}
    \limsup_{s\to t}\limsup_{y\to\bar{\ext N}(s)}\ext \Lambda_{r,f}(y)
    \le \limsup_{y\to\bar{\ext N}(t)}\ext \Lambda_{r,f}(y).
  \end{equation*}
  This implies that the derivative of $\bar{\ext D}_{r,f}(t)$ at $t$ must lie in the interval $[0,\limsup_{y\to\bar{\ext N}(t)}\ext \Lambda_{r,f}(y)]$. This is why we construct the extension of $\Lambda(\cdot)$ as $\Phi(\cdot)$ to be the derivative of $\bar{\ext D}_{r,f}(t)$ (see \eqref{eq:R-Lambda} in Definition \ref{def:fluid}). It now remains to show that
  \begin{equation}
    \label{eq:tech-A-Phi-verify}
    \ext A\Phi(\bar{\ext N}(t))\le c.
  \end{equation}
  Observing that $\ext A\ext \Lambda(\bar{\ext n})\le c$ for any state $\ext n$ due to the allocation policy \eqref{eq:opt-route} we conclude for the pre-limit process $\bar{\ext N}(\cdot)$ that
  \begin{equation*}
    \int_t^{t+h} \ext A \ext \Lambda_{r,f}\big(\sfs{j}{\ext N}(s)\big)ds
    \le c h.
  \end{equation*}
  By the convergence \eqref{eq:tech-conv-to-D}, we must have \eqref{eq:tech-A-Phi-verify}.
\end{proof}

\subsection{State space collapse and asymptotic complementarity}
\label{sec:state-space-collapse}

There are two key properties leading to the proof of Theorem~\ref{thm:diffusion}. Note that the diffusion scaled stochastic processes $(\ds{\ext X},\ds{\ext W},\ds{Y},\ds{Z})$ only satisfy equations \eqref{eq:dcp-main} and \eqref{eq:dcp-mono} of the DCP problem, but do not satisfy the rest \eqref{eq:dcp-posi}, \eqref{eq:dcp-compli} and \eqref{eq:dcp-H}. We will show in the following proposition that the stochastic processes satisfy them in an approximation sense. The approximated satisfaction of \eqref{eq:dcp-posi} and \eqref{eq:dcp-H} is called \emph{state space collapse}, meaning that the workload processes are asymptotically close to the fixed point state $\invw$; The approximate satisfaction of \eqref{eq:dcp-compli} is called \emph{Asymptotic Complementarity}, and is instrumental to establish tightness.

\begin{proposition}
  \label{prop:ssc-ac}
  Pick a sample-path dependent constant $C$ such that
  \begin{equation}
    \label{eq:cond-X-osc}
    \sup_{s,t\in[0,T]}|\ds{\ext X}(t)-\ds{\ext X}(s)|\le C,
  \end{equation}
  and any $\epsilon>0$.
  Under condition \eqref{eq:initial-cond}, we have for all sufficiently large $k$
  \begin{enumerate} %[label=(\alph*)]
  \item State space collapse:
    \begin{equation*}
      \dist\big(\ds{\ext W}(t)\big)\le \epsilon,
      \quad \textrm{for all }t\in[0,T];
    \end{equation*}
  \item Asymptotic complementarity:
    \begin{equation*}
      \ds Y_l(t) \textrm{ can not increase at time } t
      \textrm{ if } g_l^T\ds{\ext W}(t)>2\epsilon,
      \quad \textrm{for all }t\in[0,T];
    \end{equation*}
  \item Boundedness: There exists $M>0$, depending on $C$ and network parameters, such that
    \begin{equation*}
      |\ds{\ext W}(t)|\le M,
      \quad \textrm{for all }t\in[0,T].
    \end{equation*}
  \end{enumerate}
\end{proposition}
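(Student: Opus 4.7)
My plan is to adapt the state-space collapse framework of \cite{Bramson1998} and \cite{Stolyar2004}, relying on the three main tools already in place: the uniform fluid approximation (Proposition~\ref{prop:fluid-approx}), the uniform exponential convergence of fluid model solutions to the invariant manifold (Theorem~\ref{thm:fluid-conv-inv}), and the reflection-boundary lemma (Lemma~\ref{lem:reflection-boundary}). I would prove the three parts in the order (iii), (i), (ii). The basic device is the correspondence \eqref{eq:diffusion-fluid}, which pairs each diffusion-scaled $t\in[0,T]$ with some $(j,s)$, $j\in\{0,\ldots,\lfloor kT\rfloor\}$, $s\in[0,L]$, allowing one to rephrase claims about $\ds{\ext W}$ as claims about $\sfs{j}{W}$.

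For Part (i), assuming Part (iii) gives a uniform $|\ds{\ext W}(t)|\le M$, the argument is by contradiction. If $\dist(\ds{\ext W}(t^k))>\epsilon$ along a subsequence, let $\kappa_0$ denote the Lipschitz constant of $\dist(\cdot)$, pick $L>T_{M,\epsilon/(2\kappa_0)}$, and set $j^k$ so that $kj^k+kL=k^2t^k$ (i.e.\ $\sfs{j^k}{W}(L)=\ds{\ext W}(t^k)$). Proposition~\ref{prop:fluid-approx} extracts a further subsequence along which $\sfs{j^k}{W}\to\bar w$ u.o.c., with $\bar w$ a fluid model solution satisfying $|\bar w(0)|\le M$. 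Theorem~\ref{thm:fluid-conv-inv} then yields $|\bar w(L)-\bar w(\infty)|<\epsilon/(2\kappa_0)$ with $\bar w(\infty)\in\invw$, so $\dist(\bar w(L))<\epsilon/2$. Passing to the limit gives $\dist(\ds{\ext W}(t^k))\to\dist(\bar w(L))<\epsilon$, the desired contradiction. Part (ii) then follows by applying Part (i) with tolerance $\sigma(M,\epsilon)$ from Lemma~\ref{lem:reflection-boundary}: whenever $g_l^T\ds{\ext W}(t)>2\epsilon$, the lemma forces $\ext A_l \ext\Lambda(\ext N^k(k^2t))=c_l$, making the integrand in the definition of $\ds Y_l$ vanish at $t$, so $\ds Y_l$ cannot increase there.

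The main obstacle is Part (iii). A naive iteration of the fluid bound $|\bar w(t)|\le B|\bar w(0)|$ (Bramson's Proposition~6.1, used in the proof of Theorem~\ref{thm:fluid-conv-inv}) across the $O(k)$ overlapping shifted fluid intervals of diffusion length $L/k$ produces a blow-up factor $B^{kT/L}$ that is not uniform in $k$. The plan is to use Theorem~\ref{thm:fluid-conv-inv} to eliminate this compounding: if $L$ exceeds the mixing time $T_{M,\epsilon}$, then each shifted fluid trajectory reaches an invariant point $\bar w(\infty)\in\invw$ well before the end of its interval, and the dynamics on $\invw$ are stationary. Combined with the Skorokhod-map Lipschitz property for the DCP \eqref{eq:dcp-main-alt}--\eqref{eq:dcp-compli} (applied to $\ext W^k_G:=G^T\ext W^k$) and the oscillation bound \eqref{eq:cond-X-osc}, this replaces the multiplicative factor across iterations by an additive contribution that vanishes as $k\to\infty$, giving a uniform bound $|\ds{\ext W}(t)|\le K(M_0+C)$ with $K$ depending only on network parameters and $M_0$ being the almost-sure bound on $|\ds{\ext W}(0)|$ from \eqref{eq:initial-cond}. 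The technical heart of the argument is to verify that, once a shifted fluid solution has settled onto $\invw$, the next shifted interval inherits essentially the same invariant-manifold starting point up to negligible error, so the iteration stabilizes rather than amplifies.
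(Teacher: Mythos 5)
Your identification of the three key tools (Proposition~\ref{prop:fluid-approx}, Theorem~\ref{thm:fluid-conv-inv}, Lemma~\ref{lem:reflection-boundary}) and your arguments for Parts (i) and (ii) match the paper's. You also correctly diagnose the central difficulty: a naive iteration of the fluid bound $|\bar w(t)|\le B|\bar w(0)|$ across the $O(k)$ shifted fluid windows compounds to $B^{kT/L}$.

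However, your proposed strict ordering (iii) $\Rightarrow$ (i) $\Rightarrow$ (ii) has a genuine circularity that would make the argument collapse. Your plan for (iii) invokes the Skorokhod-map Lipschitz property for the DCP applied to $\ext W^k_G=G^T\ext W^k$; but the pre-limit process $(\ds{\ext W},\ds Y)$ does \emph{not} solve the DCP \eqref{eq:dcp-main-alt}--\eqref{eq:dcp-compli}. The only reason an oscillation bound of the form $\osc{G^T\ds{\ext W}}{[0,t]}\le \kappa_c\,\osc{\ds{\ext X}}{[0,t]}+\kappa_c\epsilon$ (the paper's \eqref{eq:osc-bound}, via \cite[Lemma~13]{YeYao2012}) is available is that the complementarity condition holds approximately, which is precisely Part (ii). So Part (iii) depends on Part (ii), not the other way around, and a sequential argument that defers (ii) to the end cannot close. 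The paper avoids this by proving all three claims \emph{simultaneously} by induction on the shifted-fluid index $j$: the overlap $L>1$ means $\sfs{j_k}{\ext W}(s)=\sfs{j_k-1}{\ext W}(1+s)$, so the induction hypothesis gives all three on $[0,L-1]$ for index $j_k$; the fluid approximation together with Theorem~\ref{thm:fluid-conv-inv} extends SSC and AC to $[L-1,L]$; and only \emph{then} does the freshly established AC on $[0,(j_k+L)/k]$ unlock the oscillation bound, which yields boundedness for index $j_k$ without any compounding factor. Relatedly, the additive contribution from the oscillation bound is $\kappa_c(C+\epsilon)$, which is bounded but does \emph{not} vanish as $k\to\infty$; the resulting $M$ is fixed in terms of $|\ext A\chi_0|$, $C$, $\epsilon$ and network constants, and is chosen so that the induction is self-consistent (one also needs $L>T_{M,\min(\epsilon/4,\sigma/2)}+1$, the $+1$ supplying the overlap that drives the induction).
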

\begin{proof}
  Due to the relationship \eqref{eq:diffusion-fluid} between the diffusion and fluid scaled processes, we just need prove these three results for the shifted fluid scaled processes, i.e.,
  \begin{align}
    \label{eq:tech-ssc}
    & \dist\big(\sfs{j}{\ext W}(s)\big)\le \epsilon,\\
    \label{eq:tech-ac}
    & \sfs{j}{Y}_l(s) = \sfs{j}{Y}_l(0) \textrm{ if }\sup_{s'\in[0,L]}g^T_l\sfs{j}{\ext W}(s')>2\epsilon,\\
    \label{eq:tech-bd}
    & |\sfs{j}{\ext W}(s)|\le M,
  \end{align}
  for all $j=0,1,\ldots, \fl{kT}$ and $s\in[0,L]$. We choose $L>T_{M,\min(\epsilon/4,\sigma/2)}+1$ with $T_{M,\min(\epsilon/4,\sigma/2)}$ specified in Theorem~\ref{thm:fluid-conv-inv}.
  We prove by induction. First, we show \eqref{eq:tech-ssc}--\eqref{eq:tech-bd} hold for $j=0$. It follows from the initial condition \eqref{eq:initial-cond}, Proposition~\ref{prop:fluid-approx} and Theorem~\ref{thm:fluid-conv-inv} that
  \begin{equation*}
    \sfs{0}{\ext W}(s) \to \chi\quad \textrm{\uoc{} on }[0,L],
  \end{equation*}
  for some $\chi\in\invw$. Though the above convergence should be interpreted as for any subsequence there is a further convergent subsequence, an easy proof by contradiction can show this is enough to prove results for all sufficiently large $k$. Thus, we omit the complication of introducing notation for subsequence. Thus \eqref{eq:tech-ssc} and \eqref{eq:tech-bd} hold for $j=0$ and all sufficiently large $k$. Moreover,
  \begin{equation*}
    |g^T_l\big(\sfs{0}{\ext W}(s)-\chi\big)|<\min(\epsilon/4,\sigma/2),
  \end{equation*}
  for all $s\in[0,L]$. This implies that
  \begin{equation}
    \label{eq:tech-ineq-1}
    \begin{split}
      &\quad|g^T_l\sfs{0}{\ext W}(s)-g^T_l\sfs{0}{\ext W}(s')| \\
      &\le |g^T_l\big(\sfs{0}{\ext W}(s)-\chi\big)| + |g^T_l\big(\sfs{0}{\ext W}(s')-\chi\big)| \\
      &\le \epsilon.
    \end{split}
  \end{equation}
  So if $\sup_{s'\in[0,L]}g^T_l\sfs{0}{\ext W}(s')>2\epsilon$ for some link $l$, then $\inf_{s'\in[0,L]}g^T_l\sfs{0}{\ext W}(s')>\epsilon$ due to the triangle inequality
  \begin{align*}
    g^T_l\sfs{0}{\ext W}(s) \ge g^T_l\sfs{0}{\ext W}(s')
    - |g^T_l\sfs{0}{\ext W}(s)-g^T_l\sfs{0}{\ext W}(s')|.
  \end{align*}
  Applying Lemma~\ref{lem:reflection-boundary}, we have
  \begin{equation}
    \label{eq:tech-Y-ac}
    \sfs{0}{Y}_l(t)-\sfs{0}{Y}_l(0) = \int_0^t\big(c_l-\ext A_l\ext\Lambda(\ds{\ext N}(s))\big)ds = 0.
  \end{equation}
  Thus \eqref{eq:tech-ac} is proved for $j=0$.

  Now assume for each $k$ there exits $j_k$ such that \eqref{eq:tech-ssc}--\eqref{eq:tech-bd} hold for all $j=0,1,\ldots, j_k-1$ for all sufficiently large $k$. Note that
  \begin{equation}
    \label{eq:overlaping}
    \sfs{j_k}{\ext W}(s) = \sfs{j_k-1}{\ext W}(1+s).
  \end{equation}
  Since $L>1$, due to overlapping, \eqref{eq:tech-ssc}--\eqref{eq:tech-bd} hold for $j=j_k$ on $[0,L-1]$. We just need to extend the result from $[0,L-1]$ to $[0,L]$. By Proposition~\ref{prop:fluid-approx} (again we omit the technicality of subsequence), as $k\to\infty$
  \begin{equation}
    \label{eq:tech-conv-j-fm}
    \sfs{j_k}{\ext W}(s)\to \bar{\ext W}(s) \quad \textrm{\uoc{} on }[0,L],
  \end{equation}
  for some fluid limit $\bar{\ext W}(\cdot)$. Due to \eqref{eq:overlaping}, we readily have $|\sfs{j_k}{\ext W}(0)|\le M$. This implies that $|\bar{\ext W}(0)|\le M$. So apply Theorem~\ref{thm:fluid-conv-inv}, we have for all $s\ge L-1\ge T_{M,\epsilon/4}$
  \begin{align}
    \label{eq:tech-ssc-j}
    & \dist\big(\bar{\ext W}(s)\big)< \min(\epsilon/4,\sigma/2),\\
    \label{eq:tech-close-j}
    & |g^T_l\big(\bar{\ext W}(s)-\chi\big)|<\min(\epsilon/4,\sigma/2),
  \end{align}
  for some $\chi\in\invw$. \eqref{eq:tech-conv-j-fm} and \eqref{eq:tech-ssc-j} imply that \eqref{eq:tech-ssc} holds for $j=j_k$ and $s\in[L-1,L]$.
  \eqref{eq:tech-conv-j-fm} and \eqref{eq:tech-close-j} imply that
  \begin{align*}
    |g^T_l\big(\sfs{j}{\ext W}(s)-\chi\big)|
    \le |g^T_l\big(\sfs{j}{\ext W}(s)-\bar{\ext W}(s)|+|g^T_l\big(\bar{\ext W}(s)-\chi\big)|
    \le \min(\epsilon/2,\sigma),
  \end{align*}
  for all $s\in[L-1,L]$. So \eqref{eq:tech-ineq-1} and \eqref{eq:tech-Y-ac} also hold for $j=j_k$ on $[L-1,L]$. By Lemma~\ref{lem:reflection-boundary}, \eqref{eq:tech-ac} is proved for $j=j_k$ and $s\in[L-1,L]$.
  The proof of boundedness \eqref{eq:tech-bd} relies on the asymptotic complementarity \eqref{eq:tech-ac}. Introduce the oscillation of a function on the interval $[a,b]$
  \begin{equation*}
    \osc{f}{[a,b]}=\sup_{a\le s\le t\le b}|f(t)-f(s)|.
  \end{equation*}
  It follows from \cite[Lemma~13]{YeYao2012} (also see \cite[Proposition~7]{KKLW2009}) that \eqref{eq:tech-ac} implies that
  \begin{align}
    \label{eq:osc-bound}
    \osc{G^T\ds{\ext W}}{[0,\frac{j_k+L}{k}]}
    & \le \kappa_c\osc{\ds{\ext X}}{[0,\frac{j_k+L}{k}]} + \kappa_c\epsilon\\
    & \le \kappa_c(C+\epsilon)\nonumber
  \end{align}
  by condition \eqref{eq:cond-X-osc}.
  Recall the definition $G= \ext A^T(\ext A \ext B \ext A^T)^{-1}$, and observe that we have
  \begin{equation}
    \label{eq:tech-matrix-AG}
    \ext A \ds{\ext W}(t) =(\ext A \ext B \ext A^T)G^T\ds{\ext W}(t).
  \end{equation}
  So there exists another constant $\kappa_a$, which only depends on $(\ext A \ext B \ext A^T)$, such that
  \begin{align*}
    |\ext A \ds{\ext W}(t)|
    \le |\ext A \ds{\ext W}(0)|+\osc{\ext A\ds{\ext W}}{[0,t]}
    \le |\ext A \chi_0| + \epsilon + \kappa_a\kappa_c(C+\epsilon)
  \end{align*}
  for all $t\le (j_k+L)/k$ and all sufficiently large $k$, where the last inequality is due to the initial condition \eqref{eq:initial-cond} and \eqref{eq:tech-matrix-AG}. Choose
  \begin{equation*}
    M = \frac{|\ext A \chi_0| + \epsilon + \kappa_a\kappa_c(C+\epsilon)}{\min_{l,r}\{A_{l,r}: A_{l,r}>0\}}.
  \end{equation*}
  Thus, $|\sfs{j_k}{\ext W}(s)|\le M$ for all $s\in[0,L]$ due to \eqref{eq:diffusion-fluid}, and \eqref{eq:tech-bd} holds for $j=j_k$.
\end{proof}

\begin{proof}[Proof of Theorem~\ref{thm:diffusion}]
  According to the functional central limit theorem (e.g., Chapter~5 of \cite{ChenYao2001}), as $k\to\infty$,
  \begin{equation}
    \label{eq:conv-primitive}
    \ds{\ext E}(t) \dto \hat{\ext E}(t)
    \ \text{ and } \
    \ds{\ext S}(t) \dto \hat{\ext S}(t),
  \end{equation}
  where $\hat{\ext E}_{r,f}(t)$ and $\hat{\ext S}_{r,f}(t)$ are standard Brownian motions independent of each other.
  Using the Skorohod representation theorem, we can map all random objects to the same probability space on which the above convergence, as well as the convergence \eqref{eq:initial-cond}, holds  a.s. So we employ sample-path arguments for the rest of this proof.

  We first show the convergence of $\ds{\ext X}(t)$.
  Consider the fluid scaled process by factor $k^2$ instead of $k$, and define $\tilde{\ext W}^n(t):=\ext W^k(k^2t)/k^2$. The fluid approximation result, Proposition~\ref{prop:fluid-approx}, still holds. Note that by condition \eqref{eq:initial-cond}, as $k\to\infty$,
  \begin{equation*}
    \tilde{\ext W}^k(t) = \frac{1}{k}\ds{\ext W}(0)\to 0\in\invw.
  \end{equation*}
  This implies, by Theorem~\ref{thm:fluid-conv-inv}, that, as $k\to\infty$,
  \begin{equation}
    \label{eq:D-cov-rho}
    \fss{\ext D}_{r,f}(t) \to \ext\rho_{r,f}t,\quad\textrm{\uoc{} on }[0,\infty).
  \end{equation}
  The convergence \eqref{eq:D-cov-rho}, together with \eqref{eq:conv-primitive} (almost sure convergence version), implies that
  \begin{equation}
    \label{eq:D-cov-S}
    \ds{\ext S}_{r,f,f'}\big(\tilde{\ext D}_{r,f}(t)\big) \to \hat{\ext S}_{r,f,f'}(\ext\rho_{r,f}t),
    \quad\textrm{\uoc{} on }[0,\infty).
  \end{equation}
  Let
  \begin{equation*}
    \hat{\ext U}(t) = \sum_{f'\in\phase_r}\hat{\ext S}_{r,f',f}(\ext\rho_{r,f'}t)
    -\sum_{f'\in\phase_r \cup \{0\}}\hat{\ext S}_{r,f,f'}(\ext\rho_{r,f}t).
  \end{equation*}
  Recall \eqref{eq:X-center-scaled}, the diffusion scaled version of the system dynamics \eqref{eq:centered-k}.
  From the above convergence \eqref{eq:conv-primitive}--\eqref{eq:D-cov-S}, we can conclude that, \uoc{} on $[0,\infty)$,
  \begin{equation}
    \label{eq:X-conv}
    \ds{\ext X}(t) \to \hat{\ext X}(t),
  \end{equation}
  where $\hat{\ext X}(t) = - \ext\theta t + \diag{\ext m} (I-\ext P^T)^{-1} (\hat{\ext E}(\ext a t) + \hat{\ext U}(t))$.
  Clearly, it has drift $-\ext\theta$.
  % ----- calculate the COV -----
  We now show that the covariance matrix is \eqref{eq:COV}.
  The covariance matrix of $\hat{\ext E}(\ext a t)$ is $\diag {\lambda \ext a}$.
  To compute the covariance matrix of $\hat{\ext U}(t)$, we only need to do that for each fixed $r\in\route$.
  Note that each $\hat{\ext S}^k_{r,f,f'}(\ext\rho_{r,f}t)$, $f\in\phase_r$, $f'\in\phase_r\cup\{0\}$, is an independent Brownian motion with variance $\ext{\rho}_{r,f}\ext{\mu}_{r,f} P^r_{f,f'}$.
  Observe that
  \begin{align*}
    &\quad \E[\hat{\ext U}_{f_0}(t)\hat{\ext U}_{f_1}(t)] \\
    &= \E\Bigg[
      \Big(  \sum_{g\in\phase_r} \hat{\ext S}_{r,g,{f_0}}(\ext\rho_{r,g}t)
           - \sum_{g\in\phase_r\cup\{0\}} \hat{\ext S}_{r,{f_0},g}(\ext\rho_{r,f_0}t)  \Big)
      \Big(  \sum_{g\in\phase_r} \hat{\ext S}_{r,g,{f_1}}(\ext\rho_{r,g}t)
           - \sum_{g\in\phase_r\cup\{0\}} \hat{\ext S}_{r,{f_1},g}(\ext\rho_{r,f_1}t)  \Big)
    \Bigg]
  \end{align*}
  Writing out this product we get an expression of the form $I-II-III+IV$. We compute each term separately. Let $\id{\cdot}$ be the indicator function.
  \begin{align*}
    I  &= \id{f_0=f_1} \sum_{g\in\phase_r} \E\Big[\hat{\ext S}_{r,g,f_0}^2(\ext\rho_{r,g}t)\Big]
       = \id{f_0=f_1} \sum_{g\in\phase_r} \ext{\mu}_{r,g} \ext\rho_g P_{gf_0},\\
    IV &= \id{f_0=f_1} \ext{\mu}_{r,f_0}\ext{\rho}_{r,f_0}, \hspace{1cm}
    II = \ext\rho_{r,f_1}\ext{\mu}_{r,f_1} P^r_{f_1,f_0},\hspace{1cm}
    III = \ext\rho_{r,f_0}\ext{\mu}_{r,f_0} P^r_{f_0,f_1}.
  \end{align*}
  Thus the covariance matrix of $\hat{\ext U}$ is given by \eqref{eq:COV-S}, from which we obtain \eqref{eq:COV}.

  Second, we study the convergence of $\ds Z(t)$. By Proposition~\ref{prop:ssc-ac}~(a), as $k\to\infty$,
  \begin{equation*}
    |h^T_m\ds{\ext W}(t)|\to 0, \quad\textrm{\uoc{} on }[0,\infty).
  \end{equation*}
  Multiplying both side of the diffusion scaled version of \eqref{eq:dynamics-k-workload-decomp}, we have
  \begin{equation*}
    h^T_m\ds{\ext W}(t) = h^T_m\ds{\ext W}(0) + h^T_m\ds{\ext X}(t) + \ds Z(t).
  \end{equation*}
  So as $k\to\infty$,
  \begin{equation}
    \label{eq:Z-conv}
    \ds{Z}(t)\to \hat Z(t):=-h^T_m\hat{\ext X}(t).
  \end{equation}

  Next, we study the convergence of $\ds{Y}$. It follows from Proposition~\ref{prop:ssc-ac}~(c) that $\ds Y(t)$ is also uniformly bounded on the interval $[0,T]$. Hence, according to Helly's selection theorem (e.g., \cite[p.\ 336]{Billingsley1995}), for any subsequence of $\ds Y(t)$, there exists a further subsequence $\mathcal K$ along which as $k\to\infty$
  \begin{equation}
    \label{eq:Y-conv}
    \ds Y(t)\to\hat Y(t),
  \end{equation}
  for non-decreasing function $\hat Y(t)$ which are continuous almost everywhere. The above convergence hold for all time $t\in[0,T]$ at which $\hat Y(t)$ is continuous.

  Summarizing \eqref{eq:X-conv}--\eqref{eq:Y-conv}, by \eqref{eq:dynamics-k-workload-decomp}, we have along the subsequence $\mathcal K$ as $k\to\infty$,
  \begin{equation*}
    \ds{\ext W}(t) \to \hat{\ext W}(t)
    = \hat{\ext W}(0) + \hat{\ext X}(t) + \ext B G \hat Y(t) + \ext B H \hat Z(t)
  \end{equation*}
  for almost all $t\in[0,L]$ (those $t$ at which $\hat Y(t)$ is continuous). Note that $\hat Y(t)$ can be chosen to be right continuous with left limit since it is continuous almost everywhere. Thus, $\hat{\ext W}(t)$ is also right continuous with left limit. By Proposition~\ref{prop:ssc-ac},  the $\big(\ds{\ext W},\ds{\ext X},\ds Y,\ds Z\big)$ satisfies the DCP \eqref{eq:dcp-main}--\eqref{eq:dcp-H}. It follows from the oscillation bound \eqref{eq:osc-bound} that the limit $\hat{\ext W}(t)$ is continuous, and so is the process $\hat Y(t)$. By the uniqueness of the solution to the DCP problem (e.g., \cite[Proposition~4]{YeYao2012}), the convergence along the subsequence $\mathcal K$ implies the convergence along the original sequence.
\end{proof}

\section{The invariant distribution: insensitivity and product form}
\label{sec:invariant}

In this section we analyze the SRBM $\hat{\ext N}(t)$; the limit of our queue length process.
Define
\begin{equation*}
  % \label{eq:W-G-diffusion-def}
  \hat W_G(t) = G^T \hat{\ext W}(t).
\end{equation*}
It follows from \eqref{eq:ext-B-equiv} and \eqref{eq:G-matrix} (in particular $G^T \ext B \ext A^T = I$) that
\begin{equation*}
  \hat{\ext W}(t) = \ext B\ext A^T \hat W_G(t) = \ext B^{\dagger} \ext A^T \hat W_G(t).
\end{equation*}
By \eqref{eq:workload-phase}, and $1/\ext m$ the vector with each component being the reciprocal of the corresponding one of $\ext m$,
\begin{align*}
  \hat{\ext N}(t) &= (I-\ext P^T) \diag {1/\ext m} \hat{\ext W}(t).
\end{align*}
According to the definition of $\ext B^\dagger$,
\begin{equation*}
  \hat{\ext N}(t) = \diag {\ext \rho} \ext A^T \hat W_G(t).
\end{equation*}
Recall that \eqref{eq:A-ext-A}.
Since $N_r(t)=\sum_{f\in\phase_r}\ext N_{r,f}(t)$ and same relation holds for the diffusion limits, the limiting queue length process at the route level satisfies
\begin{equation*}
  % \label{eq:N-W_G-diffusion}
  \hat N(t) = \diag \rho A^T \hat W_G(t).
\end{equation*}
We derive the invariant distribution for $\hat W_G(t)$:
\begin{theorem}
  \label{thm:productform}
  Assume $\theta>0$.
  As $t\rightarrow\infty$, $\hat W_G(t)\rightarrow \hat W_G(\infty)$ in distribution, where the random variable $\hat W_G(\infty)$ is a vector of independent exponential distributions with rate  $\theta$.
\end{theorem}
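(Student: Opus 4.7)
The plan is to recognize $\hat W_G$ as a standard SRBM in the nonnegative orthant $\R^L_+$, verify the skew-symmetric product-form condition of \cite{HarrisonWilliams1987}, and read off the exponential rates from the drift.

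First I would derive the SRBM data for $\hat W_G$. Multiplying the limit of \eqref{eq:dynamics-k-workload-decomp} on the left by $G^T$ and invoking the identities $G^T\ext B G = (\ext A \ext B \ext A^T)^{-1}$, $G^T\ext B H = 0$, and $\ext A \ext B G = I$ from \eqref{eq:G-matrix}, one obtains
\begin{equation*}
  \hat W_G(t) = \hat W_G(0) + G^T\hat{\ext X}(t) + R\,\hat Y(t), \qquad R := (\ext A \ext B \ext A^T)^{-1}.
\end{equation*}
Because $\hat{\ext W} = \ext B\ext A^T \hat W_G$ on the invariant manifold (combining \eqref{eq:w-decompose-GH} with $H^T\hat{\ext W}=0$) and $\ext A \ext B G = I$, the DCP constraints \eqref{eq:dcp-posi}--\eqref{eq:dcp-compli} translate into $\hat W_G \geq 0$ together with $\hat Y_l$ nondecreasing and increasing only when $\hat W_{G,l} = 0$. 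Hence $\hat W_G$ is a standard SRBM on $\R^L_+$ with drift $\mu = -G^T\ext\theta$, covariance $\Gamma = G^T\Sigma_X G$, and symmetric positive-definite reflection matrix $R$.

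The heart of the proof is the matrix identity
\begin{equation*}
  \ext A\, \Sigma_X\, \ext A^T \;=\; 2\,\ext A\, \ext B\, \ext A^T,
\end{equation*}
after which sandwiching by $(\ext A \ext B \ext A^T)^{-1}$ gives $\Gamma = G^T\Sigma_X G = 2R$. To verify it I would use the phase-level balance $\lambda_r\ext a_r = (I-P^{r,T})(\ext\rho_r\cdot\ext\mu_r)$, which is immediate from \eqref{eq:rho-phase}, together with \eqref{eq:COV}--\eqref{eq:COV-S}. The cross terms $-\ext P^T\diag{\ext\rho\cdot\ext\mu}-\diag{\ext\rho\cdot\ext\mu}\ext P$ inside $\Sigma_U$ exactly offset the off-diagonal contribution coming from $(I-\ext P^T)^{-1}\diag{\lambda\ext a}(I-\ext P)^{-1}$, so that after aggregating over phases of each route, carried out via the block-diagonal structure of $\ext P$ and the row-aggregation matrix $\ext C$, the expression collapses to twice the corresponding block of $\ext B$. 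This is the second-moment cancellation announced in the introduction and is the algebraic incarnation of insensitivity: the only surviving information from the phase-type law is the mean service time, not higher moments. I expect this to be the main obstacle; each step is elementary matrix algebra but the block bookkeeping is delicate.

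With $\Gamma = 2R$ and $R = R^T$, the Harrison-Williams skew-symmetric condition $2\Gamma = R D_R^{-1}D_\Gamma + D_\Gamma D_R^{-1}R^T$ holds with $D_R^{-1}D_\Gamma = 2I$, as both sides equal $4R$. A short computation gives $R^{-1}G^T = (\ext A\ext B\ext A^T)(\ext A \ext B \ext A^T)^{-1}\ext A = \ext A$, so $-R^{-1}\mu = \ext A\ext\theta = A\theta$. Because $A$ is nonnegative with no zero row (every link is a bottleneck), the hypothesis $\theta>0$ secures $A\theta>0$ and hence stability. Invoking \cite{HarrisonWilliams1987} then yields a product-form exponential stationary distribution for $\hat W_G(\infty)$ with rate vector $\gamma = -R^{-1}\mu = A\theta$, which is the link-level slack vector denoted by $\theta$ in the statement of the theorem.
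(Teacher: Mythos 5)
Your approach is essentially the same as the paper's: recognize $\hat W_G$ as an SRBM, establish $\Gamma = 2R$, and invoke the Harrison--Williams skew-symmetry criterion, which then becomes trivial because $\Gamma$ and $R$ are proportional. The core identity you name, $\ext A\,\Sigma_X\,\ext A^T = 2\,\ext A\,\ext B\,\ext A^T$, is exactly what the paper verifies, though by a slightly different route. Where you propose to match cross terms of $\Sigma_U$ against off-diagonal terms of $(I-\ext P^T)^{-1}\diag{\lambda\ext a}(I-\ext P)^{-1}$, the paper instead evaluates $\ext C\Sigma_X\ext C^T$ and $\ext C\ext B\ext C^T$ separately and finds that both are diagonal matrices with $r$th entry proportional to $\lambda_r\beta_r^{(2)}$ (the factor being $1$ and $1/2$ respectively), using a renewal-theoretic identity $\beta_r^{(2)} = 2\vect{m}_r^T(I-P^{r,T})^{-1}\vect{\rho}_r$. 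So your phrase \emph{``the only surviving information from the phase-type law is the mean service time''} is not quite accurate: both $\Gamma$ and $R$ individually carry the second moment $\beta^{(2)}$, and the insensitivity arises because $\beta^{(2)}$ cancels in the ratio $\Gamma R^{-1} = 2I$, and therefore in the stationary rate. This distinction matters because it is the mechanism the introduction is referring to when it says second moments ``cancel out against one another while computing the invariant distribution.''

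On the rate vector: you arrive at $-R^{-1}\mu = A\theta$, whereas the paper's theorem statement and its last line (``$\sigma = 2\Gamma_d^{-1}R_d\theta = \theta$'') simply write $\theta$. Your answer is actually the dimensionally and physically correct one: $\theta \in \R^R_+$ is a route-level quantity, while $\hat W_G$ is $L$-dimensional, so the exponential rates must be the $L$-dimensional slack vector $A\theta$. This is consistent with the paper's own informal description in the introduction, where the rate is identified with the scaled resource slack $c - A\rho^{(k)} \approx A\theta/k$. In short, your rate is right and the paper's displayed $\theta$ is a harmless abuse of notation (it should read $A\theta$). Minor point of rigor you should not skip in a full write-up: the passage from the negative drift $G^T\ext\theta$ of $\hat W_G$ to $A\theta$ uses $\ext C\ext\theta = \theta$ and $\ext A = A\ext C$, which you have correctly but silently invoked.
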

We prove this theorem by checking a condition for product form, due to \cite{HarrisonWilliams1987}. A version of this result suitable for our purposes is stated in Section \ref{ss-skew}. The condition involves a relationship between the covariance matrix and the reflection matrix which are analyzed in Section \ref{ss-cov} and \ref{ss-rev}. All insights are combined in Section \ref{ss-prod}.

\subsection{Sufficient condition for product form}
\label{ss-skew}

A SRBM is characterized by the drift $-\theta$, covariance matrix $\Gamma$ of the free process, and reflection matrix $R$. The SRBM has a stationary distribution as we assume $\theta>0$. \cite{HarrisonWilliams1987} have shown when this stationary distribution is of product form assuming a normalized form of $R$. For our purposes the version presented as Theorem 7.12 in \cite{ChenYao2001} is most convenient, and we follow that verbatim here.
Suppose that $R^{-1} \theta>0$.  Let $\Gamma_d$ be a diagonal matrix containing the diagonal elements of $\Gamma$, and let $R_d$ be a diagonal matrix containing the diagonal elements of $R$.
If
\begin{equation}
  \label{eq:productformcondition}
  2\Gamma = RR_d^{-1}\Gamma_d + \Gamma_d R_d^{-1} R^T,
\end{equation}
the density of the stationary distribution is given by
\begin{equation*}
  f(z) = \prod_{r\in\route} \sigma_r e^{-\sigma_r z},
\end{equation*}
where the $R$-dimensional vector $\sigma = 2\Gamma_d^{-1} R_d \theta$.
We need to verify this in our situation.
From the discussion of the reflection mapping in Section~\ref{sec:geometry-fixed-point}, in particular \eqref{eq:dcp-main-alt}, we have
\begin{align*}
  \hat W_G(t) &= \hat W_G(0) + G^T\hat{\ext X}(t) + G^T\ext BG Y(t) \nonumber\\
  &= \hat W_G(0) +  (\ext A\ext B\ext A^T)^{-1}\ext A\hat{\ext X}(t) + (\ext A\ext B\ext A^T)^{-1}Y(t),
\end{align*}
where the last inequality follows from the definition of $G$ (recall $G =\ext A^T (\ext A\ext B\ext A^T)^{-1}$) and \eqref{eq:G-matrix}.
So the reflection matrix $R= (\ext A\ext B\ext A^T)^{-1}$.
Since in our case the reflection matrix is symmetric, the sufficient condition \eqref{eq:productformcondition} becomes
\begin{equation}
  \label{eq:productformcondition-sym}
  \Gamma = R R_d^{-1} \Gamma_d.
\end{equation}
In Section~\ref{ss-cov}, we derive an expression for the covariance matrix of $A\hat{\ext X}(t)$. Then in Section~\ref{ss-rev}, we simplify the reflection matrix $R$.
Together, they also yield the covariance matrix of $G^T\hat{\ext X}(t) = R\ext A\hat{\ext X}(t)$.
We verify \eqref{eq:productformcondition-sym} in Section~\ref{ss-prod}.

\subsection{The covariance matrix}
\label{ss-cov}

The covariance matrix of $\ext A \ext X$ is
\begin{equation}
  \label{eq:COV-AX}
  \ext A\Sigma_X \ext A^T = AC \Sigma_X C^T A^T,
\end{equation}
by Theorem~\ref{thm:diffusion} and \eqref{eq:A-ext-A}.
In view of \eqref{eq:COV}, to simplify the notation, let $\vect{\tau}_r = (I-P^r)^{-1} \ext{m}_r$. In other words, $\vect{\tau}_r=(\vect{\tau}_{r,1},\ldots, \vect{\tau}_{r,F_r})^T$ where $\vect{\tau}_{r,f}$ can be interpreted as the residual service time of a job at phase $f$ on route $r$.
Note that by \eqref{eq:COV} and \eqref{eq:COV-S}, $\Sigma_X$ is a block diagonal matrix with $r$th block being an $F_r$-dimensional matrix
\begin{equation*}
  \Sigma^r_X = \lambda_r\diag{\vect{a}_r}+\Sigma^r_U,
\end{equation*}
where
\begin{equation}
  \label{eq:COV-S_r}
  \Sigma_U^r = \diag{ (I + P^{r,T}) (\vect{\rho}_r\cdot \vect{\mu}_r)} -P^{r,T} \diag {\vect{\rho}_r\cdot \vect{\mu}_r} - \diag {\vect{\rho}_r \cdot \vect{\mu}_r} P^r.
\end{equation}
Due to the structure of $\ext C$ (c.f.\ Section~\ref{sec:geometry-fixed-point}), the matrix $\ext C\Sigma_X \ext C^T$ is an $R\times R$ diagonal matrix, with on each diagonal entry an expression of the form
\begin{equation}
  \label{eq:cov-r-th}
  \vect{\tau}_r^T \left( \lambda_r \diag{\vect{a}_r} + \Sigma_U^r\right)\vect{\tau}_r.
\end{equation}
To compute the above value, we first need to simplify $\Sigma_U^r$.
Note that, by \eqref{eq:rho-phase}
\begin{equation}
  \label{eq:rho-vec-r}
  \vect{\rho}_r = \lambda_r \diag{\ext{m}_r} (I- P^{r,T})^{-1} \vect{a}_r,
\end{equation}
we see that
\begin{equation*}
  \vect{\rho}_r \cdot \vect{\mu}_r = \lambda_r (I-P^{r,T})^{-1} \vect{a}_r.
\end{equation*}
Thus, we have
\begin{align*}
  (I + P^{r,T}) (\vect{\rho}_r\cdot \vect{\mu}_r) = \lambda_r (I + P^{r,T})(I-P^{r,T})^{-1}\vect{a}_r = \lambda_r [2(I-P^{r,T})^{-1}-I] \vect{a}_r.
\end{align*}
So the first term on the right hand side of \eqref{eq:COV-S_r} can be transformed into $2 \lambda_r\diag{(I-P^{r,T})^{-1}\vect{a}_r}-\lambda_r\diag{\vect{a}_r}$.
The second and the third terms on the right hand side of \eqref{eq:COV-S_r} are just transpose of each other, thus they play the same role in computing the quadratic form \eqref{eq:cov-r-th}.
This implies that \eqref{eq:cov-r-th} can be written as
\begin{align}
  \quad 2\lambda_r
  \vect{\tau}_r^T
  \left(
    \diag{\vect{a}_r^T (I-P^r)^{-1}} (I-P^r)
  \right)
  \vect{\tau}_r
  &= 2 \lambda_r
  \ext{m}_r^T (I-P^{r,T})^{-1} \diag{\vect{a}_r^T (I-P^r)^{-1} } \ext{m}_r \nonumber\\
  &= 2 \lambda_r
  \ext{m}_r^T (I-P^{r,T})^{-1} \vect{\rho}_r,
  \label{eq:r-th-interm}
\end{align}
where the first equality is due to the definition of $\vect{\tau}_r$ in the above.
Let $\beta^{(2)}_r$ be the second moment of the phase-type distribution specified by $\vect{a}_r$ and $P^r$.
We now show that \eqref{eq:r-th-interm} equals $\lambda_r \beta_r^{(2)}$.
The normalized load vector $\vect{\rho}_r/\rho_r$ has a renewal-theoretic interpretation: for a renewal process with phase-type inter-renewal times $\ext\rho_{r,f}/\rho_r$ contains the probability that the renewal process is in phase $f$ in stationarity.
Using renewal theory, and recalling \eqref{eq:mean-MixErlang}, we see that
\begin{align*}
  \frac{\beta_r^{(2)}}{2\beta_r}
  = \frac{\vect{\tau}_r^T\vect{\rho}_r}{\rho_r}
  =  \frac  {\ext{m}_r^T (I-P^{r,T})^{-1} \diag{\ext{m}_r} (I-P^{r,T})^{-1} \vect{a}_r }{\beta_r}
  =  \frac  {\ext{m}_r^T (I-P^{r,T})^{-1} \vect{\rho}_r }{\beta_r},
\end{align*}
where the last equality is due to \eqref{eq:rho-vec-r}.
Consequently,
\begin{equation*}
  \beta^{(2)}_r=2 \ext{m}_r^T (I-P^{r,T})^{-1} \vect{\rho}_r.
\end{equation*}
In view of \eqref{eq:cov-r-th}--\eqref{eq:r-th-interm},  the $r$th element of the diagonal matrix $C \Sigma_X C^T$ is $\lambda_r \beta^{(2)}_r$.
Thus, setting $\beta^{(2)}=(\beta_1^{(2)},\ldots, \beta_R^{(2)})$,
\begin{equation*}
C \Sigma_X C^T = \diag{\lambda \cdot \beta^{(2)}}.
\end{equation*}
By \eqref{eq:COV-AX}, we conclude that the covariance matrix of $\ext A\ext X(t)$ is $A \diag{\lambda \cdot \beta^{(2)}} A^T$.

\subsection{The reflection matrix}
\label{ss-rev}

By \eqref{eq:A-ext-A}, the reflection mapping can be written as
\begin{equation*}
  R = (\ext A \ext B \ext A^T)^{-1} = (A \ext C \ext B \ext C^T A^T)^{-1}.
\end{equation*}
According to \eqref{eq:B}, $\ext B$ is a $\sum_{r\in\route}F_r$-dimensional diagonal matrix.
Due to the structure of $\ext C$ (see Section~\ref{sec:geometry-fixed-point}), $\ext C \ext B \ext C^T$ is a $R$-dimensional diagonal matrix, with the $r$th element being the sum of the all the elements on the diagonal of the $r$th block of $B$.
Thus, by \eqref{eq:rho-phase}, the $r$th diagonal element of  $\ext C \ext B \ext C^T$ is
\begin{align*}
  \lambda_r \ext{m}_r^T(I-P^{r,T})^{-1}\diag{\ext{m}_r}(I-P^{r,T})^{-1}\vect{a}_r
  = \lambda_r \ext{m}_r^T(I-P^{r,T})^{-1}\vect{\rho}_r
  = \lambda_r\beta^{(2)}_r/2.
\end{align*}
So we have $R = \frac 12 (A \diag{\lambda \cdot \beta^{(2)}} A^T )^{-1}$.

\subsection{Verification of skew symmetry condition}
\label{ss-prod}

We are now in a position to verify the product form condition \eqref{eq:productformcondition-sym}.

\begin{proof}[Proof of Theorem~\ref{thm:productform}]
Set $D = \diag\lambda \diag { \beta^{(2)}}$. In the previous two sections we derived for the reflection matrix
 $R = (ADA^T)^{-1}/2$ and for the covariance matrix $\Sigma = G^T A\Sigma_XA^T G$, which equals  $R A D A^T R$. This implies that $\Sigma =2R$.
 The product form condition
\eqref{eq:productformcondition-sym}  which is $R^{-1} \Sigma = \Sigma_d R_d^{-1}$ is equivalent to $R^{-1} \Sigma = \Sigma_d R_d^{-1}$ which is now trivial: both sides  equal $2$.
The vector $\sigma = 2\Gamma_d^{-1} R_d \theta = \theta$; see also  \cite{HarrisonWilliams1987} and \cite{ChenYao2001}.
\end{proof}

\section*{Acknowledgments}
This research is made possible by grants from the `Joint Research Scheme' program, sponsored by the Netherlands Organization of Scientific Research (NWO) and the Research Grants Council of Hong Kong (RGC) through projects 649.000.005 and D-HK007/11T, respectively. MV is also affiliated with CWI, and is supported by a MEERVOUD grant from Netherlands Organisation for Scientific Research (NWO). BZ is also affiliated with VU University, Eindhoven of Technology, and Georgia Institute of Technology, and is supported by an NWO VIDI grant and an IBM faculty award.

\bibliography{pub}

\begin{thebibliography}{}

\bibitem[\protect\citeauthoryear{Asmussen}{Asmussen}{2003}]{Asmussen2003}
Asmussen, S. (2003).
\newblock {\em Applied probability and queues\/} (Second ed.), Volume~51 of
  {\em Applications of Mathematics}.
\newblock New York: Springer-Verlag.

\bibitem[\protect\citeauthoryear{Billingsley}{Billingsley}{1995}]{Billingsley1995}
Billingsley, P. (1995).
\newblock {\em Probability and measure\/} (Third ed.).
\newblock Wiley Series in Probability and Mathematical Statistics. New York:
  John Wiley \& Sons Inc.

\bibitem[\protect\citeauthoryear{Bonald and Prouti{\`e}re}{Bonald and
  Prouti{\`e}re}{2003}]{BonaldProutiere2003}
Bonald, T. and A.~Prouti{\`e}re (2003).
\newblock Insensitive bandwidth sharing in data networks.
\newblock {\em Queueing Syst.\/}~{\em 44\/}(1), 69--100.

\bibitem[\protect\citeauthoryear{Boyd and Vandenberghe}{Boyd and
  Vandenberghe}{2004}]{BoydVandenberghe2004}
Boyd, S. and L.~Vandenberghe (2004).
\newblock {\em Convex optimization}.
\newblock Cambridge: Cambridge University Press.

\bibitem[\protect\citeauthoryear{Bramson}{Bramson}{1996}]{Bramson1996b}
Bramson, M. (1996).
\newblock Convergence to equilibria for fluid models of head-of-the-line
  proportional processor sharing queueing networks.
\newblock {\em Queueing Syst.\/}~{\em 23\/}(3-4), 1--26.

\bibitem[\protect\citeauthoryear{Bramson}{Bramson}{1998}]{Bramson1998}
Bramson, M. (1998).
\newblock State space collapse with application to heavy traffic limits for
  multiclass queueing networks.
\newblock {\em Queueing Syst.\/}~{\em 30\/}(1-2), 89--148.

\bibitem[\protect\citeauthoryear{Chen and Yao}{Chen and
  Yao}{2001}]{ChenYao2001}
Chen, H. and D.~D. Yao (2001).
\newblock {\em Fundamentals of queueing networks}, Volume~46 of {\em
  Applications of Mathematics (New York)}.
\newblock New York: Springer-Verlag.

\bibitem[\protect\citeauthoryear{Hardy, Littlewood, and P{\'o}lya}{Hardy
  et~al.}{1988}]{HLP1988}
Hardy, G., J.~Littlewood, and G.~P{\'o}lya (1988).
\newblock {\em Inequalities\/} (2nd ed.).
\newblock Cambridge Mathematical Library. Cambridge University Press.

\bibitem[\protect\citeauthoryear{Harrison}{Harrison}{2000}]{Harrison2000}
Harrison, J.~M. (2000).
\newblock Brownian models of open processing networks: canonical representation
  of workload.
\newblock {\em Ann. Appl. Probab.\/}~{\em 10\/}(1), 75--103.

\bibitem[\protect\citeauthoryear{Harrison, Mandayam, Shah, and Yang}{Harrison
  et~al.}{2014}]{HMSY2014}
Harrison, J.~M., C.~Mandayam, D.~Shah, and Y.~Yang (2014).
\newblock Resource sharing networks: overview and an open problem.
\newblock {\em Stochastic Systems\/}.

\bibitem[\protect\citeauthoryear{Harrison and Williams}{Harrison and
  Williams}{1987}]{HarrisonWilliams1987}
Harrison, J.~M. and R.~J. Williams (1987).
\newblock Multidimensional reflected {B}rownian motions having exponential
  stationary distributions.
\newblock {\em Ann. Probab.\/}~{\em 15\/}(1), 115--137.

\bibitem[\protect\citeauthoryear{Jonckheere and L\'{o}pez}{Jonckheere and
  L\'{o}pez}{2014}]{JonckheereLopez2014}
Jonckheere, M. and S.~L\'{o}pez (2014).
\newblock Large deviations for the stationary measure of networks under
  proportional fair allocations.
\newblock {\em Math. Oper. Res.\/}~{\em 39\/}(2), 418--431.

\bibitem[\protect\citeauthoryear{Kang, Kelly, Lee, and Williams}{Kang
  et~al.}{2009}]{KKLW2009}
Kang, W., F.~P. Kelly, N.~H. Lee, and R.~J. Williams (2009).
\newblock State space collapse and diffusion approximation for a network
  operating under a fair bandwidth sharing policy.
\newblock {\em Ann. Appl. Probab.\/}~{\em 19\/}(5), 1719--1780.

\bibitem[\protect\citeauthoryear{Kang and Williams}{Kang and
  Williams}{2007}]{KangWilliams2007}
Kang, W. and R.~J. Williams (2007).
\newblock An invariance principle for semimartingale reflecting brownian
  motions in domains with piecewise smooth boundaries.
\newblock {\em Ann. Appl. Probab.\/}~{\em 17\/}(2), 741--779.

\bibitem[\protect\citeauthoryear{Kelly}{Kelly}{1997}]{Kelly1997}
Kelly, F. (1997).
\newblock Charging and rate control for elastic traffic.
\newblock {\em European Transactions on Telecommunications\/}~{\em 8\/}(1),
  33--37.

\bibitem[\protect\citeauthoryear{Kelly, Massouli\'{e}, and Walton}{Kelly
  et~al.}{2009}]{KMW2009}
Kelly, F.~P., L.~Massouli\'{e}, and N.~S. Walton (2009).
\newblock Resource pooling in congested networks: proportional fairness and
  product form.
\newblock {\em Queueing Syst.\/}~{\em 63\/}(1-4), 165--194.

\bibitem[\protect\citeauthoryear{Kelly and Williams}{Kelly and
  Williams}{2004}]{KellyWilliams2004}
Kelly, F.~P. and R.~J. Williams (2004).
\newblock Fluid model for a network operating under a fair bandwidth-sharing
  policy.
\newblock {\em Ann. Appl. Probab.\/}~{\em 14\/}(3), 1055--1083.

\bibitem[\protect\citeauthoryear{Kelly and Williams}{Kelly and
  Williams}{2010}]{KellyWilliams2010}
Kelly, F.~P. and R.~J. Williams (2010).
\newblock Heavy traffic on a controlled motorway.
\newblock In {\em Probability and mathematical genetics}, Volume 378 of {\em
  London Math. Soc. Lecture Note Ser.}, pp.\  416--445. Cambridge Univ. Press,
  Cambridge.

\bibitem[\protect\citeauthoryear{Lambert, Simatos, and Zwart}{Lambert
  et~al.}{2013}]{LSZ2013}
Lambert, A., F.~Simatos, and B.~Zwart (2013).
\newblock Scaling limits via excursion theory: interplay between
  {C}rump-{M}ode-{J}agers branching processes and processor-sharing queues.
\newblock {\em Ann. Appl. Probab.\/}~{\em 23\/}(6), 2357--2381.

\bibitem[\protect\citeauthoryear{Massouli\'{e}}{Massouli\'{e}}{2007}]{Massoulie2007}
Massouli\'{e}, L. (2007).
\newblock Structural properties of proportional fairness: stability and
  insensitivity.
\newblock {\em Ann. Appl. Probab.\/}~{\em 17\/}(3), 809--839.

\bibitem[\protect\citeauthoryear{Massouli\'{e} and Roberts}{Massouli\'{e} and
  Roberts}{1999}]{MR99b}
Massouli\'{e}, L. and J.~Roberts (1999).
\newblock Bandwidth sharing: objectives \& algorithms.
\newblock In {\em IEEE Infocom 1999}, pp.\  1395--1403.

\bibitem[\protect\citeauthoryear{Mazumdar, Mason, and Douligeris}{Mazumdar
  et~al.}{1991}]{Mazumdar1991}
Mazumdar, R., L.~Mason, and C.~Douligeris (1991).
\newblock Fairness in network optimal flow control: optimality of product
  forms.
\newblock {\em Communications, IEEE Transactions on\/}~{\em 39\/}(5), 775--782.

\bibitem[\protect\citeauthoryear{Reed and Zwart}{Reed and
  Zwart}{2014}]{ReedZwart2013}
Reed, J.~E. and B.~Zwart (2014).
\newblock Limit theorems for bandwidth sharing networks with rate constraints.
\newblock {\em Oper. Res.\/}~{\em 62\/}(6), 1453--1466.

\bibitem[\protect\citeauthoryear{Resnick}{Resnick}{1997}]{Resnick1997}
Resnick, S.~I. (1997).
\newblock Heavy tail modeling and teletraffic data.
\newblock {\em Ann. Statist.\/}~{\em 25\/}(5), 1805--1869.

\bibitem[\protect\citeauthoryear{Shah, Tsitsiklis, and Zhong}{Shah
  et~al.}{2014}]{STZ2014}
Shah, D., J.~N. Tsitsiklis, and Y.~Zhong (2014).
\newblock Qualitative properties of $\alpha$-fair policies in bandwidth-sharing
  networks.
\newblock {\em Ann. Appl. Probab.\/}~{\em 24\/}(1), 76--113.

\bibitem[\protect\citeauthoryear{{\c{S}}tef{\u{a}}nescu and
  {\c{S}}tef{\u{a}}nescu}{{\c{S}}tef{\u{a}}nescu and
  {\c{S}}tef{\u{a}}nescu}{1984}]{StefanescuStefanescu1984}
{\c{S}}tef{\u{a}}nescu, A. and M.~V. {\c{S}}tef{\u{a}}nescu (1984).
\newblock The arbitrated solution for multi-objective convex programming.
\newblock {\em Rev. Roumaine Math. Pures Appl.\/}~{\em 29\/}(7), 593--598.

\bibitem[\protect\citeauthoryear{Stolyar}{Stolyar}{2004}]{Stolyar2004}
Stolyar, A.~L. (2004).
\newblock Maxweight scheduling in a generalized switch: state space collapse
  and workload minimization in heavy traffic.
\newblock {\em Ann. Appl. Probab.\/}~{\em 14\/}(1), 1--53.

\bibitem[\protect\citeauthoryear{Walton}{Walton}{2014a}]{Walton2014Allerton}
Walton, N. (2014a).
\newblock Store-forward and its implications for proportional scheduling.
\newblock In {\em Communication, Control, and Computing (Allerton), 2014 52nd
  Annual Allerton Conference on}, pp.\  1174--1181.

\bibitem[\protect\citeauthoryear{Walton}{Walton}{2014b}]{Walton2014a}
Walton, N.~S. (2014b).
\newblock Concave switching in single and multihop networks.
\newblock In {\em The 2014 ACM International Conference on Measurement and
  Modeling of Computer Systems}, SIGMETRICS '14, New York, NY, USA, pp.\
  139--151. ACM.

\bibitem[\protect\citeauthoryear{Williams}{Williams}{1998}]{Williams1998}
Williams, R.~J. (1998).
\newblock Diffusion approximations for open multiclass queueing networks:
  sufficient conditions involving state space collapse.
\newblock {\em Queueing Syst.\/}~{\em 30\/}(1-2), 27--88.

\bibitem[\protect\citeauthoryear{Williams}{Williams}{2015}]{Williams2015}
Williams, R.~J. (2015).
\newblock Stochastic processing networks.
\newblock Technical report, Working paper, USCD, San Diego, CA.

\bibitem[\protect\citeauthoryear{Ye, Ou, and Yuan}{Ye
  et~al.}{2005}]{YeOuYuan2005}
Ye, H., J.~Ou, and X.-M. Yuan (2005).
\newblock Stability of data networks: Stationary and bursty models.
\newblock {\em Oper. Res.\/}~{\em 53\/}(1), 107--125.

\bibitem[\protect\citeauthoryear{Ye and Yao}{Ye and Yao}{2012}]{YeYao2012}
Ye, H. and D.~D. Yao (2012).
\newblock A stochastic network under proportional fair resource control --
  diffusion limit with multiple bottlenecks.
\newblock {\em Oper. Res.\/}~{\em 60\/}(3), 716--738.

\bibitem[\protect\citeauthoryear{Yi and Chiang}{Yi and
  Chiang}{2008}]{YiChiang2008}
Yi, Y. and M.~Chiang (2008).
\newblock Stochastic network utility maximisation -- a tribute to {K}elly's
  paper published in this journal a decade ago.
\newblock {\em European Transactions on Telecommunications\/}~{\em 19\/}(4),
  421--442.

\bibitem[\protect\citeauthoryear{Zwart, Borst, and Mandjes}{Zwart
  et~al.}{2004}]{ZwartBorstMandjes2005}
Zwart, B., S.~Borst, and M.~Mandjes (2004).
\newblock Exact asymptotics for fluid queues fed by multiple heavy-tailed
  on-off flows.
\newblock {\em Ann. Appl. Probab.\/}~{\em 14\/}(2), 903--957.

\end{thebibliography}

\end{document}